\documentclass[preprint]{elsarticle}

\makeatletter
\def\ps@pprintTitle{%
	\let\@oddhead\@empty
	\let\@evenhead\@empty
	\def\@oddfoot{\footnotesize\itshape
		{} \hfill\today}%
	\let\@evenfoot\@oddfoot
}
\makeatother

\usepackage{latexsym}
\usepackage{lineno}
\usepackage{hyperref}
\usepackage{indentfirst}
\usepackage{amsxtra}
\usepackage{amssymb}
\usepackage{amsthm}
\usepackage{natbib}
\usepackage{amsmath}
\usepackage{tikz}
\usepackage{amscd}
\usepackage{MnSymbol}
\usepackage[a4paper,left=1.5cm,right=1.5cm]{geometry}

\usepackage[capitalise]{cleveref}
\setlength{\unitlength}{0mm}
\setlength{\parindent}{2mm} 
\setlength{\parskip}{0mm}
\setlength{\topmargin}{0mm}

\def\comment#1{{\color{red} #1}}

\def\setof#1#2{\{#1\,\mid\,#2\}}

\def\aff#1{\mathrm{Aff}(#1)}
\def\Z{\mathbb Z}

\def\D{\Delta}
\def\comment#1{{\color{red} #1}}
\def\setof#1#2{\{#1\, : \,#2\}}
\def\A{\mathcal A}

\newtheorem{theor}{Theorem}
\newtheorem{prop}[theor]{Proposition}
\newtheorem{cor}[theor]{Corollary}
\newtheorem{lemma}[theor]{Lemma}
\setcounter{MaxMatrixCols}{30}

\theoremstyle{definition} 
\newtheorem{defin}{Definition}
\newtheorem{rem}{Remark}
\newtheorem{rems}{Remarks}
\newtheorem{ques}{Question}

\newtheorem{ex}[theor]{Example}
\newtheorem{exs}[theor]{Examples}


\def\dis{\mathrm{Dis}}
\def\setof#1#2{\{#1\, : \,#2\}}

\DeclareMathOperator{\Sym}{Sym}
\DeclareMathOperator{\id}{id}
\DeclareMathOperator{\Aut}{Aut}
\DeclareMathOperator{\End}{End}

\begin{document}

\begin{frontmatter}
	\title{Involutive (simple) latin solutions of the Yang-Baxter equation and related (left) quasigroups
 }

  \author{Marco BONATTO}
  \ead{marco.bonatto.87@gmail.com }
	\tnotetext[mytitlenote]{The second author is a member of GNSAGA (INdAM). The second author was partially supported by the MAD project Cod. ARS$01\_00717$. Corresponding author: Marco Castelli.}
	\author{Marco CASTELLI}
	\ead{marco.castelli@unisalento.it - marcolmc88@gmail.com }

	%

\begin{abstract}
In this paper, we study involutive non-degenerate set-theoretic solutions of the Yang-Baxter equation with regular displacement group. In particular, we completely describe the blocks of imprimitivity and the congruences of the irretractable ones, that we show belonging to the class of the latin solutions. Among these solutions, we characterise the simple ones having nilpotent permutation group. A more precise description involving the First Weyl Algebra will be provided when the displacement group is abelian and normal in the total permutation group, and we enumerate and classify the simple ones having minimal size $p^p$, for an arbitrary prime number $p$. Finally, we illustrate our results by some examples.  

\end{abstract}
\begin{keyword}
\texttt{imprimitive group \sep quasigroup\sep Yang-Baxter equation\sep brace\sep cycle set }
\MSC[2020] 16T25\sep 81R50  \sep 20N05
\end{keyword}

\end{frontmatter}

\section*{Introduction}

The quantum Yang-Baxter equation first appeared in theoretical physics, in a paper by C.N. Yang \cite{yang1967}, and in statistical mechanics, in R.J. Baxter's work \cite{bax72}. On the other hand, it has been studied also been studied from a  mathematical viewpoint. In 1992 Drinfel'd \cite{drinfeld1992some} suggested the study of \emph{set-theoretical} solutions, i.e. solutions that act on a basis of the underlying vector space. Specifically, a \emph{set-theoretic solution of the Yang-Baxter equation} on a non-empty set $X$ is a pair $\left(X,r\right)$, where 
$r:X\times X\to X\times X$ is a map such that the relation
\begin{align*}
\left(r\times\id_X\right)
\left(\id_X\times r\right)
\left(r\times\id_X\right)
= 
\left(\id_X\times r\right)
\left(r\times\id_X\right)
\left(\id_X\times r\right)
\end{align*}
is satisfied. Set theoretical solutions provide solutions of the Yang-Baxter equation by linearization.
Writing a solution $(X,r)$ as $r\left(x,y\right) = \left(\lambda_x\left(y\right),\rho_y\left(x\right)\right)$, with
$\lambda_x, \rho_x$ maps from $X$ into itself, for every $x\in X$, we say that $(X, r)$ is \emph{non-degenerate} if $\lambda_x,\rho_x\in \Sym_X$, for every $x\in X$, and \emph{involutive} if $r^2=\id_{X\times X}$.\\
The seminal papers due by Gateva-Ivanova and Van Den Bergh  \cite{gateva1998semigroups} and Etingov, Schedler, and Soloviev \cite{etingof1998set} encouraged several authors to the study of the involutive non-degenerate set-theoretic solutions (which we simply call \emph{involutive solutions}). Recall that a solution $(X,r)$ is said to be \emph{decomposable} if $X$ can be expressed as a disjoint union of two non-empty subsets $Y_1$ and $Y_2$ such that $r(Y_i\times Y_i)\subseteq Y_i\times Y_i$ for all $i\in \{1,2\}$, and $(X,r)$ is called \emph{indecomposable}, otherwise. To attack the classification-problem of involutive solutions, as first attempt one can restrict the investigations to the class of indecomposable ones. This approach is motivated by the fact that indecomposable solutions are in some sense the fundamental blocks that allow to understand all the involutive ones (see \cite[Section $3$]{etingof1998set}). 
Even if we focus on indecomposable involutive solutions, a full classification is still hard, and in the last few years several authors investigated the so-called simple indecomposable involutive solutions, where a solution $(X,r)$ is said to be \emph{simple} if every epimorphic image $(Y,s)$ is isomorphic to $(X,r)$ or is a singleton. A notion of simple involutive solution (not necessarily indecomposable) was at first introduced by Vendramin in \cite{vendramin2016extensions} for involutive solutions not necessarily indecomposable. Even if his definition of simplicity is different, it coincides in the indecomposable case. Every simple involutive solution of size bigger than $2$ is indecomposable by \cite{cedo2021constructing}. These solutions play a special role since they are the “fundamental blocks" to construct all the indecomposable involutive ones by dynamical extensions (see \cite[Proposition 2]{cacsp2018} and \cite[Corollary 2.13]{vendramin2016extensions}).\\
Even if the construction of simple involutive solutions appeared quite hard, several interesting results were obtained in the last few years. If $p$ is an odd prime number there exist a unique simple involutive solution of size $p$, see \cite[Theorem 2.13]{etingof1998set}. For every natural number $n>1$, a simple involutive solution of cardinality $p^n$ was recently constructed in \cite{cedo2024simplep}, and in the case $n=2$ a full classification was given in \cite{dietzel2023indecomposable}. Further examples of cardinality $n^2$ and $mn^2$, where $n,m$ are arbitrary natural number, were constructed in \cite{cedo2021constructing,cedo2022new}. On the other hand, we know that an indecomposable involutive solutions of square-free cardinality can not be simple, as showed in \cite{cedo2022indecomposable}. 

In order to provide a theoretic description of simple solutions, in \cite{castelli2022characterization} the second author characterised them in terms of skew left braces provided by their associated permutation groups. Recall that a skew left brace, introduced in \cite{guarnieri2017skew,rump2007braces}, is a triple $(B,+,\circ)$ where $(B,+)$ and $(B,\circ)$ are groups and the equality
$$x\circ (y+z)=x\circ y-x+x\circ z $$
holds for all $x,y,z\in B$. The notion of simplicity was recently investigated in the general setting of bijective non-degenerate solutions (simply called \emph{solution}) not necessarily involutive. In \cite{castelli2022simplicity}, a criterion of simplicity was provided by blocks of imprimitivity of the associated permutation group. A remarkable generalization of \cite[Theorem $12$]{castelli2022characterization} was recently provided in \cite{colazzo2024simple}, where the simple bijective solutions (not necessarily involutive) were completely characterised by the underlying permutation skew left braces, and several examples of non-involutive simple solutions were provided. In this way, the main Theorem of \cite[Section $1$]{colazzo2024simple} represents a unifying tool to understand the notion of simplicity of several algebraic structures already present in literature.
By these last results, it is clear that for the simplicity of a solution the ideal $B^{(2)}$ of the permutation left brace plays a fundamental role. However, we are still far away from a concrete description of the maps $r$ related to simple solutions. Indeed, in \cite{colazzo2024simple} a complete description of simple solutions provided by skew left braces $B$ with $B^{(2)}=0$ or $B^{(3)}=0$ was given, but the remaining case $B^{(2)}=B^{(3)}\neq 0$ is still open. As noted in \cite[Section $1$]{colazzo2024simple}, the last case includes the involutive simple solutions that are not of Lyuabashenko type. To partially fill this gap, one of the main goals of this paper is the study of finite involutive simple solutions in which $B^{(2)}$, that in this case coincides with the so-called \emph{displacement group}, acts regularly on the underlying set. We will show that these solutions belong to the family of \emph{latin} solutions, i.e. solutions in which the maps $\lambda_x$ provide a quasigroup structure on the set $X$ (see \cite{bon2019}).
However, our investigation is not limited to simple solutions. Motivated by \cite[Theorem $3.1$]{cedo2020primitive}, where involutive solutions with primitive permutation group were completely classified, we focus on blocks of imprimitivity of involutive solutions (not necessarily simple). In addition to the main result of \cite{cedo2020primitive}, which ensures the existence of a complete blocks system, we will show that a complete blocks system can be always obtained by the orbits of a minimal normal subgroup of the permutation group. 
To develop a theory useful to simple involutive solutions, in the second part we specialize to the irretractable ones. By tools that come from quasigroups theory, we decribe \emph{all} the complete blocks system of irretractable involutive solutions with regular displacement group. Among these complete blocks systems, we distinguish the ones that give rise to congruences, providing an extension of a result involving the family of finite \emph{affine} quasigroups (see \cite[Lemma 4.8]{bon2019}) to a larger class of finite quasigroups. In order to understand more about congruences, using tools that come from the left quasigroups theory provided in \cite{semimedial}, we develop a covering theory of left quasigroups that allows to recover the one given in \cite{rump2023primes} for indecomposable involutive solutions as a special case. This is the reason for which, throught the paper, we will use the language of cycle sets. Recall that a \emph{cycle set} is a left quasigroup $(X,\cdotp)$ in which the equality $(x\cdotp y)\cdotp (x\cdotp z)=(y\cdotp x)\cdotp (y\cdotp z)$ holds for all $x,y,z\in X$. Cycle sets are useful to detect solutions of the Yang-Baxter equation: indeed, non-degenerate cycle sets bijectively correspond to non-degenerate involutive solutions, and by \cite[Propositions 1-2]{rump2005decomposition} the correspondence is given by 
\begin{equation}\label{corrisp}
    (X,\cdotp) \longrightarrow  (X,r)\quad with\quad  r(x,y):=(\sigma_x^{-1}(y),\sigma_x^{-1}(y)\cdot x).
\end{equation}
As one can expect, indecomposable (resp. simple) involutive solutions correspond to indecomposable (resp. simple) cycle sets. In the core of the paper, we focus on simple involutive solutions $(X,r)$ with nilpotent regular displacement group. We will show that, in this setting, $X$ must have a prime-power cardinality $p^n$ and the $p$-Sylow subgroup of the permutation group $\mathcal{G}(X)$ is normal. 
In this case we can reduce the classification to solutions for which $\mathcal{G}(X)$ is a $p$-group. A complete description of these solutions will be provided, while the ones having cyclic displacement group will be concretely classified. These results will be refined if, in addition, the displacement group is abelian and normal in the so-called \emph{total permutation group}. In this last case, we exhibit a connection
with irreducible representations of the First Weyl Algebra over a field with a prime number of elements. We remark that the First Weyl Algebra has already appeared in the study of involutive solutions, see \cite{bon2019,rump2020one}. These solutions has always size $p^{k}$, for a prime number $p$ and a natural number $k$ divisible by $p$. Among these solutions, the ones having size $p^p$ will be completely classified. Finally, we illustrate our main results with some examples, and we use some of them to answer in the affirmative to \cite[Question 7.6]{cedo2021constructing}.

\section{Left quasigroups}

\subsection{Basics}

A {\it left quasigroup} is an algebraic structure given by a non-empty set $X$ together with a binary operation $\cdot:X\times X\longrightarrow X$, such that the map $$\sigma_x:y\mapsto x\cdot y$$ is bijective for every $x\in X$. Left quasigroups can also be understood as algebraic structure with two binary operations, namely $\{\cdot,\backslash\}$ where $x\backslash y=\sigma_x^{-1}(y)$ for every $x,y\in X$. In the finite case, the two description are equivalent (they might differ in the infinite case), so for the scope of this paper we are using the definition with just one binary operation. 

The {\it associated permutation group} (also called {\it left multiplication group}) is defined as
$$\mathcal{G}(X)=\langle \sigma_x\mid x\in X\rangle\leq Sym(X).$$
If such a group is transitive on $X$ we say that $X$ is {\it indecomposable} (the same property is also known as {\it connectedness}).

The map $$\delta_x:y\mapsto y\cdot x$$ does not need to be bijective. If $\delta_x$ is bijective for every $x\in X$ we say that $X$ is {\it latin}. In this case we can define another binary operation as $x/y=\delta_y^{-1}(x)$ for every $x,y\in X$. The set $X$ endowed with the three binary operations $\{\cdot, \backslash,/\}$ is called {\it quasigroup}. If the underlying set is finite the algebraic properties of the left quasigroup and the quasigroups are the same (they might differ in the infinite case). For latin left quasigroups we can define another subgroup of the symmetric group, namely the \emph{total permutation group} as 
$$\mathcal{TG}(X)=\langle \sigma_x, \delta_x\mid  x\in X\rangle .$$

An equivalence relation on $X$ is said to be a \emph{congruence} if for every $x,y,x',y'\in X$ we have that if $x\sim x'$ and $y\sim y'$ then $x\cdotp y\sim x'\cdotp y'$ and $\sigma_x^{-1}(y)\sim \sigma_{x'}^{-1}(y')$. From now on, we indicate by $\sim_{1_X}$ the congruence in which all the elements belong to the same equivalence class and by $\sim_{0_X}$ the trivial congruence induced by singleton classes. Accordingly the operation $[x]\cdot [y]=[x\cdot y]$ is well-defined and the quotient set $X/\sim$ together with this operation is a left quasigroup.

    Let $X,Y$ be left quasigroups. A \emph{homomorphism} from $X$ to $Y$ is a map $p:X\rightarrow Y$ such that $p(x\cdotp y)=p(x)\cdotp p(y)$ for all $x,y\in X$. If $p$ is surjective, then it will be called \emph{epimorphism}, while a bijective homomorphism is said to be \emph{isomorphism}. An isomorphism from a left quasigroup $X$ to itself will be called \emph{automorphism} and the automorphism group will be indicated by $Aut(X,\cdotp)$.

Epimorphisms and congruence are essentially the same thing. Indeed if $\sim$ is a congruence, the canonical map $X\longrightarrow X/\sim$ is an epimorphism. On the other hand, the equivalence relation $\sim_p$ induced by $p$ given by $x\sim_p y \Longleftrightarrow p(x)=p(y)$ for all $x,y\in X$, is a congruence of $X$ and $Y\cong X/{\sim_p}$. 

\begin{lemma}\label{lemmaprep1}\cite[Lemma 1.8]{AG}
    If $X,Y$ are left quasigroups and $p:X\rightarrow Y$ is a left quasigroups epimorphism, the assignment $\sigma_x\mapsto \sigma_{p(x)}$ extends to a groups epimorphism $\bar{p}:\mathcal{G}(X)\rightarrow \mathcal{G}(Y)$. 
\end{lemma}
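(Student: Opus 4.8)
The plan is to deduce everything from a single intertwining identity together with the universal property of free groups. First I would record that, since $p$ is a homomorphism of left quasigroups, the equation $p(x\cdot y)=p(x)\cdot p(y)$ is precisely the statement $p\circ\sigma_x=\sigma_{p(x)}\circ p$ for every $x\in X$. Composing on the left with $\sigma_{p(x)}^{-1}$ and on the right with $\sigma_x^{-1}$ yields $p\circ\sigma_x^{-1}=\sigma_{p(x)}^{-1}\circ p$, so the same intertwining relation holds for the inverses of the generators as well.

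Next I would introduce the free group $F$ on a set of symbols $\{t_x : x\in X\}$ in bijection with $X$, together with the two group epimorphisms $\pi_X\colon F\to\mathcal{G}(X)$, $t_x\mapsto\sigma_x$, and $\pi_Y\colon F\to\mathcal{G}(Y)$, $t_x\mapsto\sigma_{p(x)}$. The map $\pi_Y$ is surjective because $p$ is surjective, so every generator $\sigma_y$ of $\mathcal{G}(Y)$ has the form $\sigma_{p(x)}$ for some $x\in X$. It then suffices to show $\ker\pi_X\subseteq\ker\pi_Y$: once this is established, $\pi_Y$ factors through $\pi_X$ and produces a group homomorphism $\bar p\colon\mathcal{G}(X)\to\mathcal{G}(Y)$ with $\bar p(\sigma_x)=\sigma_{p(x)}$, which is automatically surjective.

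To prove the inclusion of kernels, take $w=w(t_{x_1},\dots,t_{x_n})\in\ker\pi_X$, so that the corresponding product of the $\sigma_{x_i}^{\pm1}$ equals $\id_X$. Applying the intertwining identities of the first step letter by letter, one transports $p$ from the right-hand end of this product all the way to the left, obtaining $p\circ\pi_X(w)=\pi_Y(w)\circ p$ as maps $X\to Y$. Since $\pi_X(w)=\id_X$, this says $\pi_Y(w)\circ p=p$, and surjectivity of $p$ forces $\pi_Y(w)=\id_Y$, i.e.\ $w\in\ker\pi_Y$. This gives the desired factorization.

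The argument is entirely formal, so there is no serious obstacle; the only point requiring a little care is the handling of the inverse generators $\sigma_x^{-1}$ in the transport step, which is exactly what the remark in the first paragraph is for. That $\bar p$ is well defined, multiplicative, and surjective is then immediate from the factorization through $F$.
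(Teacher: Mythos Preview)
Your argument is correct: the intertwining identity $p\circ\sigma_x=\sigma_{p(x)}\circ p$ (and its version for inverses) is exactly what is needed, and the free-group factorization cleanly handles well-definedness and surjectivity. The paper itself does not give a proof of this lemma at all; it simply quotes it from \cite[Lemma 1.8]{AG}, so there is nothing to compare against beyond noting that your approach is the standard one.
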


We can introduce a particular family of normal subgroups of the associated permutation group.

\begin{defin}\cite[]{semimedial}\label{admissiblesub}
    Let $X$ be a left quasigroup and $H$ be a normal subgroup of $\mathcal{G}(X)$. Then, $H$ is \emph{admissible} if $\sigma_x^{-1}\sigma_{h(x)}\in H$ for all $x\in X,$ $h\in H$. 
\end{defin}

The lattice of admissible subgroups is a sublattice of the lattice of the normal subgroups of $\mathcal{G}(X)$. Note that, if $H$ is admissible, then $H$ is normal and so we have that $\sigma_x^{-1}\sigma_{h(x)}\in H$ for all $x\in X,$ $h\in H$ if and only if $\sigma_x\sigma^{-1}_{h(x)}\in H$ for all $x\in X,$ $h\in H$

According to \cite{semimedial} we can define a Galois connection between the congruence lattice of a left quasigroup and the sublattice of the admissible subgroups. Given a congruence $\sim$ we can define 
$$\mathcal{G}^\sim=\setof{h\in \mathcal{G}(X)}{h(x)\, \sim\, x\, \text{ for every }x\in X}.$$ 
Note that $\mathcal{G}^\sim$ is nothing but the kernel of the induced group homomorphism from $\mathcal{G}(X)$ onto $\mathcal{G}(X/\sim)$ defined in Lemma \ref{lemmaprep1} and it is admissible according to \cite[Corollary 1.9]{semimedial}. On the other hand, given an admissible subgroup $H$, we can define the equivalence relation 
$\sim_{\mathcal{O}_H}$ given by 
$$x\sim_{\mathcal{O}_H} y\Longleftrightarrow y=h(x) \textit{ for some } h\in H,$$ namely the equivalence relation with blocks given by the orbits of $H$. Such equivalence is a congruence \cite[Lemma 1.8]{semimedial}. We usually denote the factor left quasigroup with respect to $\sim_{\mathcal{O}_H}$ just by $X/H$.


Moreover, given a congruence $\sim$ we can define the subgroup $\dis_\sim$ as the normal closure of $\setof{\sigma_x \sigma_y^{-1}}{x\, \sim\, y}$ in the group $\mathcal{G}(X)$. Such subgroups are admissible. We denote $\dis_{\sim_{1_X}}$ just by $\dis(X)$ and we refer to it as the {\it displacement group} of $X$. By \cite[Section $1$]{semimedial}, the quotient $\mathcal{G}(X)/\dis(X)$ is cyclic, and in particular $\mathcal{G}(X)=\dis(X)\langle \sigma_x\rangle$, for every $x\in X$.

We can define the binary relation $\sim_\sigma$ on a left quasigroup $X$ given by 
$$x\sim_\sigma y :\Longleftrightarrow \sigma_x = \sigma_y$$ 
for all $x,y\in X$. The relation $\sim_{\sigma}$ is not a congruence in general, A left quasigroup $X$ is called \emph{irretractable} if $\sim_{\sigma}=\sim_{0_X}$ (sometimes in the literature the left quasigroups with the same property are called faithful). 

Indecomposable left quasigroups can always be described in terms of left cosets.

\begin{prop}\label{rap}
    Let $X$ be an indecomposable left quasigroup, $x$ an element of $X$ and $H:=\mathcal{G}(X)_x$ the stabilizer of $x$. Then, $X$ is isomorphic to the left quasigroup on the left cosets $\mathcal{G}(X)/H$ given by $gH\cdotp hH:=\sigma_{g(x)}hH$ for all $g,h\in \mathcal{G}(X)$.
\end{prop}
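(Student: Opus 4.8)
The plan is to transport the left quasigroup structure of $X$ along the orbit map associated to the action of $\mathcal{G}(X)$ on $X$. Since $X$ is indecomposable, $\mathcal{G}(X)$ acts transitively on $X$, so the orbit--stabilizer theorem tells us that the map
\[
\phi\colon \mathcal{G}(X)/H\longrightarrow X,\qquad gH\longmapsto g(x),
\]
is a well-defined bijection: it is well defined because $k\in H$ gives $gk(x)=g(x)$; it is injective because $g(x)=g'(x)$ forces $g^{-1}g'\in H$; and it is surjective precisely by transitivity, i.e.\ by indecomposability.

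Next I would pull the operation of $X$ back along $\phi$, setting $gH\ast hH:=\phi^{-1}\bigl(\phi(gH)\cdot\phi(hH)\bigr)$. As the transport of a left quasigroup operation along a bijection, $(\mathcal{G}(X)/H,\ast)$ is automatically a left quasigroup and $\phi$ is, by construction, an isomorphism; nothing further needs to be checked about the algebraic axioms. It then only remains to identify $\ast$ with the formula in the statement. For this one computes
\[
\phi(gH)\cdot\phi(hH)=g(x)\cdot h(x)=\sigma_{g(x)}\bigl(h(x)\bigr)=\bigl(\sigma_{g(x)}h\bigr)(x)=\phi\bigl(\sigma_{g(x)}hH\bigr),
\]
and applying $\phi^{-1}$ yields $gH\ast hH=\sigma_{g(x)}hH$, exactly as claimed.

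For completeness I would also record directly that $\sigma_{g(x)}hH$ depends only on the cosets $gH$ and $hH$, so that the displayed formula is meaningful on its own terms: replacing $g$ by $gk$ with $k\in H$ leaves $g(x)$, hence $\sigma_{g(x)}$, unchanged, and replacing $h$ by $hk$ with $k\in H$ leaves $\sigma_{g(x)}hkH=\sigma_{g(x)}hH$ unchanged. There is essentially no serious obstacle in this argument; the only points needing minor care are invoking indecomposability to get surjectivity of $\phi$ and the bookkeeping that $\sigma_{g(x)}$ depends only on $gH$. Everything else is a formal transport of structure.
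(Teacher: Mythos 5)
Your proof is correct and follows the same route the paper takes: the paper's proof simply asserts that the map $gH\mapsto g(x)$ is an isomorphism "by a standard calculation," and your argument is precisely that calculation spelled out (bijectivity via orbit--stabilizer and transitivity, plus the transport-of-structure computation identifying the operation). Nothing is missing.
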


\begin{proof}
    By a standard calculation, one obtain that $(\mathcal{G}(X)/H,\cdotp)$ is a left quasigroup and $i:\mathcal{G}(X)/H\rightarrow X,$ $gH\mapsto g(x)$ is a left quasigroups isomorphism.
\end{proof}

The following result follows by a standard calculation.

\begin{prop}\label{prel3}
    Let $X$ be a latin left quasigroup. Then, $X$ is irretractable and $\dis(X)$ is transitive. In particular, $X$ is indecomposable.
\end{prop}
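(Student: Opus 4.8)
The statement bundles three assertions, and the plan is to dispatch them in the order \emph{irretractable} $\Rightarrow$ \emph{$\dis(X)$ transitive} $\Rightarrow$ \emph{indecomposable}, each being cheap once the previous one is in hand. For irretractability I would argue directly: if $\sigma_x=\sigma_y$, then $x\cdotp z=y\cdotp z$ for every $z\in X$, which read as $\delta_z(x)=\delta_z(y)$; since $X$ is latin, $\delta_z$ is a bijection, so $x=y$. Hence $\sim_\sigma=0_X$, i.e.\ $X$ is irretractable.

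The only assertion requiring a genuine (if short) argument is the transitivity of $\dis(X)$. The idea is to exhibit, for an arbitrary pair $x,y\in X$, an explicit element of $\dis(X)$ carrying $x$ to $y$, built from the generators $\sigma_a\sigma_b^{-1}$. Put $c:=\sigma_x^{-1}(x)$. Since $X$ is latin, $\delta_c$ is a bijection, so there is an $a\in X$ with $\delta_c(a)=a\cdotp c=y$, namely $a=y/c$. Then $g:=\sigma_a\sigma_x^{-1}$ belongs to $\dis(X)$ (it is one of the generating elements $\sigma_a\sigma_b^{-1}$ with $b=x$, and $\dis(X)$, being their normal closure, contains all of them), and $g(x)=\sigma_a(\sigma_x^{-1}(x))=\sigma_a(c)=a\cdotp c=y$. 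As $x,y$ were arbitrary, $\dis(X)$ is transitive on $X$.

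Indecomposability is then immediate: $\dis(X)\le\mathcal{G}(X)$, and a group containing a transitive subgroup is transitive, so $\mathcal{G}(X)$ acts transitively on $X$, which is by definition indecomposability.

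I do not expect a real obstacle here; the only point to get right is to invoke the latin hypothesis — bijectivity of the maps $\delta_x$, equivalently availability of the right division $/$ — at exactly the two places it is used (injectivity of $\delta_z$ for irretractability, surjectivity of $\delta_c$ for transitivity), since neither step would go through from the left-quasigroup axioms alone.
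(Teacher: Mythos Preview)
Your argument is correct and follows exactly the same line as the paper: irretractability via injectivity of $\delta_z$, and transitivity of $\dis(X)$ by writing down a single generator $\sigma_a\sigma_x^{-1}$ that sends $x$ to $y$ using one right division. The paper phrases the transitivity step slightly differently (for arbitrary $a,t,z$ it finds $b$ with $b\cdot t=a\cdot z$, giving $\sigma_b^{-1}\sigma_a(z)=t$), but this is the same idea with different bookkeeping.
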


\begin{proof}
Assume that $\sigma_x=\sigma_y$. Then $\delta_z(x)=x\cdotp z=y\cdotp z=\delta_z(y)$ for every $z\in X$. Thus $x=y$, i.e. $X$ is irretractable. 

Since $X$ is latin, for every $a,t,z\in X$ there exist $b\in X$ such that $b\cdotp t=a\cdotp z$, therefore $\sigma_{b}^{-1}\sigma_a(z)=t$, hence $\dis(X)$ is transitive on $X$.
\end{proof}

Given a left quasigroup $(X,\cdotp)$, we can define several operations that makes $X$ into a different left quasigroup.

\begin{defin}
    If $(X,\cdotp)$ is a left quasigroup and $\alpha\in Aut(X,\cdotp)$, we can define another operation $\cdotp_{\alpha}$ on $X$ by $x\cdotp_{\alpha} y:=\alpha(x\cdotp y)$ for every $x,y\in X$. The left quasigroup $(X,\cdotp_{\alpha})$ will be indicated by $X_\alpha$ and will be called \emph{deformation of $X$ by $\alpha$}.
\end{defin}

\subsection{Coverings of indecomposable left quasigroups}

In this section we develop a covering theory for indecomposable left quasigroup which is consistent with the one developed in \cite[Section 2]{rump2020} for indecomposable cycle sets.

\noindent By \cref{lemmaprep1}, the following definition makes sense. Clearly, it is consistent with \cite[Definition 1]{rump2020}.

\begin{defin}\label{cov1}
    Let $X,Y$ be indecomposable left quasigroups and $p$ an epimorphism from $X$ to $Y$. Then, $p$ is said to be a \emph{covering} if the induced epimorphism $\bar{p}$ from $\mathcal{G}(X)$ to $\mathcal{G}(Y)$ is an isomorphism.
\end{defin}

Note that if $p$ is a covering, then $\dis_{\sim_p}\leq \mathcal{G}^{\sim_p}(X)=1$. Therefore, 
\begin{equation}\label{covering are below lambda}
    p(x)=p(y) \text{ implies }\sigma_x=\sigma_y.
    \end{equation}
In the next result, we show that a left quasigroup $X$ always arises as epimorphic image of $\mathcal{G}(X)$ by a covering.

\begin{prop}\label{univ}
        Let $X$ be an indecomposable left quasigroup, $x$ an element of $X$ and $Y$ be the left quasigroup $(\mathcal{G}(X),\cdotp)$ given by $g\cdotp h:=\sigma_{g(x)}h$ for all $g,h\in \mathcal{G}(X)$. Then the map 
        $$u:Y\longrightarrow X,\quad g\mapsto g(x)$$ 
        is a epimorphism of left quasigroup. Moreover, $u$ is a covering. 
\end{prop}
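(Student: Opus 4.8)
The plan is to verify three things in turn: that $(\mathcal{G}(X),\cdotp)$ really is a left quasigroup, that $u$ is a left-quasigroup epimorphism, and that the induced map $\bar u$ on associated permutation groups is an isomorphism (so that $u$ is a covering in the sense of \cref{cov1}).

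First I would check that $Y=(\mathcal{G}(X),\cdotp)$ with $g\cdotp h:=\sigma_{g(x)}h$ is a left quasigroup. Its left translation by $g$ is $h\mapsto \sigma_{g(x)}h$, i.e. left multiplication by the element $\sigma_{g(x)}\in\mathcal{G}(X)$; this is a bijection of the underlying set $\mathcal{G}(X)$, so $Y$ is a left quasigroup. Note $\sigma^{Y}_{g}=L_{\sigma_{g(x)}}$, left multiplication in the group $\mathcal{G}(X)$. Next, $u(g\cdotp h)=u(\sigma_{g(x)}h)=\sigma_{g(x)}(h(x))=\sigma_{g(x)}h(x)$, while $u(g)\cdotp u(h)=g(x)\cdotp h(x)=\sigma_{g(x)}(h(x))$; these agree, so $u$ is a homomorphism. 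It is surjective because $\mathcal{G}(X)$ is transitive on $X$ (here we use that $X$ is indecomposable): every element of $X$ is $g(x)$ for some $g$. Hence $u$ is an epimorphism, and by \cref{lemmaprep1} it induces a group epimorphism $\bar u:\mathcal{G}(Y)\to\mathcal{G}(X)$ with $\bar u(\sigma^{Y}_{g})=\sigma_{u(g)}=\sigma_{g(x)}$.

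The crux is showing $\bar u$ is injective. Since $\sigma^{Y}_{g}=L_{\sigma_{g(x)}}$ and $\mathcal{G}(Y)=\langle\sigma^{Y}_{g}:g\in\mathcal{G}(X)\rangle=\langle L_{\sigma_{g(x)}}:g\in\mathcal{G}(X)\rangle$, and as $g$ ranges over $\mathcal{G}(X)$ the element $g(x)$ ranges over all of $X$ (transitivity again), the generators $\sigma_{g(x)}$ run over exactly the standard generators $\sigma_z$, $z\in X$, of $\mathcal{G}(X)$. So $\mathcal{G}(Y)$ is the group of left translations $L_{w}$ with $w\in\mathcal{G}(X)$, i.e. the image of $\mathcal{G}(X)$ under the left-regular representation, which is a faithful representation. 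Concretely, $\bar u$ sends $L_{w}\mapsto w$ (first on generators $L_{\sigma_z}\mapsto\sigma_z$, then extending multiplicatively, using $L_{w_1}L_{w_2}=L_{w_1 w_2}$), and the left-regular representation $w\mapsto L_w$ is its inverse; hence $\bar u$ is an isomorphism. Therefore $u$ is a covering, and $Y$ is a covering of $X$.

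The main obstacle is the bookkeeping in the last paragraph: one must be careful that $\bar u$ as produced by \cref{lemmaprep1} genuinely inverts the left-regular representation on the level of the whole group and not merely on generators. This is handled by observing that both $\bar u$ and $w\mapsto L_w$ are group homomorphisms agreeing on a generating set of $\mathcal{G}(X)$ (resp. $\mathcal{G}(Y)$), hence are mutually inverse. Everything else is a routine check that the defined operation satisfies the left-quasigroup axiom and that $u$ respects it.
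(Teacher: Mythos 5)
Your proof is correct and amounts to the same standard verification that the paper's one-line proof invokes: the paper phrases it through the coset description of \cref{rap}, while you check directly that the operation is a left-quasigroup operation, that $u$ is a surjective homomorphism, and that $\bar u$ inverts the left-regular representation $w\mapsto L_w$ of $\mathcal{G}(X)$, which is the substantive point. The only step left tacit is that $Y$ itself is indecomposable, as \cref{cov1} requires of both quasigroups, but this is immediate from your own observation that $\mathcal{G}(Y)$ is exactly the image of the left-regular representation and hence acts transitively on $Y$.
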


\begin{proof}
    It follows by a standard calculation using that $X$ can be identified as the left coset left quasigroup as in \cref{rap}.
\end{proof}

 Following the terminology of \cite[Section 2]{rump2020} and \cite[Section 3]{rump2023primes}, $u$ will be called \emph{universal covering}. Now, we give the analog of \cite[Proposition 4]{rump2023primes} for arbitrary indecomposable left quasigroups.

\begin{prop}
    Let $X,Y$ be indecomposable left quasigroups and suppose that $p:Y\rightarrow X$ is a covering. Then, the universal covering $u:\mathcal{G}(X)\rightarrow X$ factors through $p$.
\end{prop}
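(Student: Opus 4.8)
The plan is to build the factorization explicitly at the level of the underlying sets and then check it is a left quasigroup homomorphism using the fact that $\bar p$ is an isomorphism. First I would fix an element $y_0 \in Y$ and set $x_0 := p(y_0) \in X$. By \cref{univ} applied to $Y$, the universal covering of $Y$ is $u_Y \colon \mathcal{G}(Y) \to Y$, $g \mapsto g(y_0)$; and since $p$ is a covering, $\bar p \colon \mathcal{G}(Y) \to \mathcal{G}(X)$ is an isomorphism, so composing with $\bar p^{-1}$ we get a map $\mathcal{G}(X) \xrightarrow{\bar p^{-1}} \mathcal{G}(Y) \xrightarrow{u_Y} Y$. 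I claim this composite, call it $q$, is a covering and that $p \circ q$ is the universal covering $u \colon \mathcal{G}(X) \to X$ based at $x_0$.

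Next I would verify $q$ is a left quasigroup epimorphism. Surjectivity is clear since $u_Y$ is surjective and $\bar p^{-1}$ is bijective. For the homomorphism property, recall from \cref{univ} that $\mathcal{G}(X)$ carries the operation $g \cdotp h = \sigma_{g(x_0)} h$ (and likewise $\mathcal{G}(Y)$ carries $g' \cdotp h' = \sigma_{g'(y_0)} h'$). The key computation is that $\bar p$ intertwines these operations: since $\bar p(\sigma_{g'}) = \sigma_{p g'(\text{base})}$ type relations hold by \cref{lemmaprep1}, one checks $\bar p^{-1}(\sigma_{g(x_0)} h) = \sigma_{(\bar p^{-1}g)(y_0)} \, \bar p^{-1}(h)$, using that $\bar p^{-1}(\sigma_{g(x_0)}) = \sigma_{z}$ where $z$ is the unique element with $p(z) = g(x_0)$ lying appropriately — more precisely using $p \circ u_Y = u \circ \bar p$, which itself follows because $p(g'(y_0)) = \overline{p}(g')(p(y_0)) = \bar p(g')(x_0)$. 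Then $u_Y$ being a homomorphism (again \cref{univ}) gives that $q$ is a homomorphism, and that $\bar q = u_Y\text{-induced} \circ \bar p^{-1}$ — in fact $\bar q$ is an isomorphism because the induced map of a universal covering is an isomorphism and $\bar p^{-1}$ is an isomorphism, so $q$ is a covering.

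Finally, $p \circ q \colon \mathcal{G}(X) \to X$ sends $g \mapsto p(u_Y(\bar p^{-1}(g))) = p((\bar p^{-1}g)(y_0))$, and by the identity $p \circ u_Y = u \circ \bar p$ noted above this equals $u(\bar p (\bar p^{-1} g)) = u(g) = g(x_0)$, so $p \circ q = u$, which is exactly the assertion that $u$ factors through $p$.

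The main obstacle I anticipate is the bookkeeping around base points: the universal covering is only defined up to the choice of $x_0 \in X$, and one must choose $y_0 \in p^{-1}(x_0)$ consistently so that the two universal coverings (of $X$ and of $Y$) are compatible, and then check the intertwining identity $p \circ u_Y = u \circ \bar p$ carefully rather than hand-wave it. Once that identity is in place everything else is a routine diagram chase of the kind the authors repeatedly defer to "a standard calculation."
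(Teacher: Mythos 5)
Your construction is exactly the paper's: the authors also pick $y\in p^{-1}(x)$ and define $q(g)=\bar{p}^{-1}(g)(y)$, which is precisely your $u_Y\circ\bar{p}^{-1}$, and conclude $u=p\circ q$. Your write-up just makes explicit the intertwining identity $p\circ u_Y=u\circ\bar{p}$ that the paper leaves as a standard calculation, so the proposal is correct and follows essentially the same route.
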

\begin{proof}
Let $x\in X$ and $y\in Y $ such that $p(y)=x$ and $\bar{p}:\mathcal{G}(Y)\longrightarrow \mathcal{G}(X)$ the induced group homomorphism, that is invertible. The map
$$q:\mathcal{G}(X)\longrightarrow Y,\quad g\longrightarrow \bar{p}^{-1}(g)(y)$$
 is a left quasigroup epimorphism and $u=p\circ q$.
\end{proof}

The following theorem, which is the main result of the section, is an extension of \cite[Theorem 1]{rump2020} to indecomposable left quasigroups and allows to understand the structure of the epimorphisms. At first recall that, given two equivalence relations $\sim$ and $\sim'$, we say that $\sim$ is contained in $\sim'$ if $x\sim y$ implies $x\sim' y$ for all $x,y\in X$.

\begin{theor}\label{fattorizzleft}
            Let $X,Y$ be indecomposable left quasigroups and $p$ an epimorphism from $X$ to $Y$. Then, $p$ factorize as $qr$ where $r:X\rightarrow X/\mathcal{G}^{\sim_p}$ is the canonical projection and $q:X/\mathcal{G}^{\sim_p}\rightarrow Y$, where $q$ is a covering of indecomposable left quasigroups.
\end{theor}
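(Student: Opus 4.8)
The plan is to set $H := \mathcal{G}^{\ker p}$, which is admissible (it is the kernel of the induced epimorphism $\bar p : \mathcal{G}(X) \to \mathcal{G}(Y)$ of \cref{lemmaprep1}, hence admissible by \cite[Corollary 1.9]{semimedial}). Thus $\mathcal{O}_H$ is a congruence of $X$ and $X/\mathcal{G}^{\ker p} = X/\mathcal{O}_H$ is a well-defined left quasigroup; it is indecomposable because $X$ is and indecomposability passes to epimorphic images via \cref{lemmaprep1}. The first step is the inclusion $\mathcal{O}_H \subseteq \ker p$: if $(x,h(x)) \in \mathcal{O}_H$ with $h \in H = \mathcal{G}^{\ker p}$, then by definition of $\mathcal{G}^{\ker p}$ we have $x \mathrel{\ker p} h(x)$, i.e. $p(x) = p(h(x))$. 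Hence $p$ is constant on the $\mathcal{O}_H$-classes, so it factors as $p = q \circ r$, where $r : X \to X/\mathcal{O}_H$ is the canonical projection and $q : X/\mathcal{O}_H \to Y$, $q([x]) := p(x)$, is a well-defined left quasigroup epimorphism between indecomposable left quasigroups.

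It remains to prove $q$ is a covering, i.e. that the induced epimorphism $\bar q : \mathcal{G}(X/\mathcal{O}_H) \to \mathcal{G}(Y)$ is injective. Applying \cref{lemmaprep1} to $p = q \circ r$ gives $\bar p = \bar q \circ \bar r$, where $\bar r : \mathcal{G}(X) \to \mathcal{G}(X/\mathcal{O}_H)$ has kernel $\mathcal{G}^{\mathcal{O}_H}$ and $\ker \bar p = \mathcal{G}^{\ker p} = H$. The heart of the matter is to show $\ker \bar r = \ker \bar p$, equivalently $\mathcal{G}^{\mathcal{O}_H} = H$. One inclusion is the general fact $H \subseteq \mathcal{G}^{\mathcal{O}_H}$, since every $h \in H$ satisfies $(x, h(x)) \in \mathcal{O}_H$ for all $x$; the reverse follows from monotonicity of $\alpha \mapsto \mathcal{G}^\alpha$ applied to $\mathcal{O}_H \subseteq \ker p$, giving $\mathcal{G}^{\mathcal{O}_H} \subseteq \mathcal{G}^{\ker p} = H$.

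With $\ker \bar r = \ker \bar p$ in hand I would finish by a short diagram chase: if $g \in \mathcal{G}(X/\mathcal{O}_H)$ with $\bar q(g) = 1$, choose $\tilde g \in \mathcal{G}(X)$ with $\bar r(\tilde g) = g$ (possible since $\bar r$ is onto); then $\bar p(\tilde g) = \bar q(\bar r(\tilde g)) = 1$, so $\tilde g \in \ker \bar p = \ker \bar r$ and hence $g = \bar r(\tilde g) = 1$. Thus $\bar q$ is injective, and being surjective (because $q$ is), it is an isomorphism; therefore $q$ is a covering. The only genuinely delicate point is the equality $\mathcal{G}^{\mathcal{O}_H} = H$, which is precisely the assertion that $\mathcal{G}^{\ker p}$ is a closed element of the Galois connection between congruences and admissible subgroups recalled above; everything else is routine bookkeeping with \cref{lemmaprep1} and the definitions.
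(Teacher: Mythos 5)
Your proof is correct and follows essentially the same route as the paper's: both factor $p$ through $X/\mathcal{O}_{\mathcal{G}^{\ker p}}$ and reduce everything to the equality $\mathcal{G}^{\mathcal{O}_H}=\mathcal{G}^{\ker p}$, proved by the same two inclusions (monotonicity from $\mathcal{O}_H\subseteq\ker p$, and the trivial containment $H\leq\mathcal{G}^{\mathcal{O}_H}$). Your explicit diagram chase for the injectivity of $\bar q$ just spells out what the paper compresses into the identification $\mathcal{G}(X/\alpha)=\mathcal{G}(X)/\mathcal{G}^{\alpha}=\mathcal{G}(Y)$.
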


\begin{proof}
    Let $\sim=\sim_{\mathcal{O}_{\mathcal{G}^{\sim_p}}}$. Thus, the map $p$ factors through the canonical map onto $X/\sim$, $$X\longrightarrow X/\sim\overset{q}{\longrightarrow} Y.$$
Since $\sim$ is contained in $\sim_p$ then $\mathcal{G}^\sim\leq \mathcal{G}^{\sim_p}$. On the other hand $\mathcal{G}^{\sim_p}\leq \mathcal{G}^\sim$ since the blocks of $\sim$ are the orbits of $\mathcal{G}^{\sim_p}$. Thus, $\mathcal{G}^\sim=\mathcal{G}^{\sim_p}$. Therefore $\mathcal{G}(X/\sim)=\mathcal{G}(X)/\mathcal{G}^{\sim}=\mathcal{G}(X)/\mathcal{G}^{\sim_p}=\mathcal{G}(Y)$. Therefore the map $q$ is a covering. 
\end{proof}

According to \eqref{covering are below lambda}, we obtain that in the irretractable case there are no non-trivial coverings.

\begin{prop}\label{nocov}
    Let $X,Y$ be left quasigroups and let $p:X\longrightarrow Y$ be a covering. If $X$ is irretractable then $p$ is an isomorphism.
\end{prop}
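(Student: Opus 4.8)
The plan is to observe that a covering is by definition an epimorphism, hence surjective, so the whole content of the statement is the injectivity of $p$; once that is established, $p$ is a bijective homomorphism of left quasigroups, and such a map is an isomorphism (its inverse is again a homomorphism, consistently with the convention adopted after the definition of homomorphism in Section~1).

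To prove injectivity, I would simply combine two facts already available. First, since $p$ is a covering, the remark following \cref{cov1} gives $\dis_{\ker p}\leq \mathcal{G}^{\ker{p}}(X)=1$, which is exactly the implication \eqref{covering are below lambda}: $p(x)=p(y)$ forces $\sigma_x=\sigma_y$. Second, $X$ is irretractable, i.e. $\sim_\sigma=0_X$, so $\sigma_x=\sigma_y$ already implies $x=y$. Chaining the two, $p(x)=p(y)$ implies $x=y$, so $p$ is injective, and therefore bijective.

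There is no real obstacle here: the substantive step has already been isolated in \eqref{covering are below lambda}, and the irretractability hypothesis is precisely what upgrades the conclusion ``$\sigma_x=\sigma_y$'' of that implication to ``$x=y$''. The only point worth a line is the verification that the set-theoretic inverse of a bijective left-quasigroup homomorphism is itself a homomorphism — writing $a=p(x)$, $b=p(y)$ and using $p(x\cdotp y)=p(x)\cdotp p(y)$ yields $p^{-1}(a\cdotp b)=p^{-1}(a)\cdotp p^{-1}(b)$ — but this is routine and in fact built into the paper's definition of isomorphism.
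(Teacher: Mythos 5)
Your argument is correct and is essentially the paper's own proof: the paper likewise notes that $\ker{p}$ is contained in $\sim_{\sigma}=0_X$ (via the implication \eqref{covering are below lambda}) and concludes injectivity, with surjectivity and the isomorphism claim being immediate. Your extra remark on the inverse being a homomorphism is harmless but not needed.
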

\begin{proof}
Note that $\sim_p$ is contained in $\sim_{\sigma}=\sim_{0_X}$. So $p$ is injective.
\end{proof}

\subsection{Blocks of imprimitivity}

Let $G$ be a group acting on a set $X$ and $\sim$ be an equivalence relation of $X$. We say that $\sim$ is $G$-invariant if $g(x)\,\sim g(y) $ provided $x\, \sim\, y$. If $G$ is transitive, a subset $Y\subseteq X$ is called a {\it block of imprimitivity} if $g(Y)\cap Y$ is either empty or equal to $Y$, for every $g\in G$, and the partition $P$ of $X$ given by $P:=\{g(Y)\}_{g\in G}$ is said to be a \emph{complete blocks system}. If the action of $G$ has no non-trivial blocks of imprimitivity it is said to be {\it primitive}. If the action of $G$ is primitive we say that $X$ is {\it primitive}.

Let $X$ be a set and $H$ be a group of permutations on the set $X$. We define
\begin{eqnarray}
    &x\, \sim_H\, y \,\Longleftrightarrow \, H_x=H_y.\label{sim H}\\
    & x\, \sim_{\mathcal{O}_H}\, y \,\Longleftrightarrow \, x=h(y), \text{ for some }h\in H.\label{relut}
\end{eqnarray}
Clearly $\sim_H$ and $\sim_{\mathcal{O}_H}$ are equivalence relations.

Let us prove that the previous equivalence relations provide invariant partitions under further assumptions.

\begin{lemma}\label{orbits of a normal}
    Let $G$ be a group acting on a set $X$ and $H\trianglelefteq G$. Then $\sim_{\mathcal{O}_H}$ and $\sim_H$ are $G$-invariant equivalence relation on $X$.  
\end{lemma}

\begin{proof}
    Let $g\in G$, $h\in H$ and $x\in X$. Then $gh(x)=ghg^{-1} g(x)$. Then $g(x)$ and $gh(x)$ are in the same orbit w.r.t. $G$.

    If $H_x=H_y$ then $H_{g(x)}=g H_x g^{-1}=g H_y g^{-1}=H_{g(y)}$, since $H$ is normal in $G$.
\end{proof}


\begin{cor}   \label{stabilizers for normal}
Let $X$ be a left quasigroup and $H$ be a normal subgroup of $\mathcal{G}(X)$. Then $\sim_H$ is a $\mathcal{G}(X)$-invariant equivalence relation.
\end{cor}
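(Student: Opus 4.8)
The plan is to simply specialize \cref{orbits of a normal} to the case $G=\mathcal{G}(X)$. Recall that $\mathcal{G}(X)=\langle\sigma_x : x\in X\rangle$ is by definition a subgroup of $\Sym(X)$, hence it acts on the set $X$ by evaluation, and by hypothesis $H\trianglelefteq\mathcal{G}(X)$. Thus the hypotheses of \cref{orbits of a normal} are met verbatim with $G=\mathcal{G}(X)$, and the conclusion that $\sim_H$ is a $\mathcal{G}(X)$-invariant equivalence relation is immediate. No transitivity assumption is needed, since the statement of \cref{orbits of a normal} (and of the definition of $G$-invariance) does not require it.

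For completeness one can also give the one-line direct argument, which is exactly the second paragraph of the proof of \cref{orbits of a normal}: the relation $\sim_H$ is an equivalence relation because it is defined by the equality $H_x=H_y$ of point stabilizers. If $x\sim_H y$ and $g\in\mathcal{G}(X)$, then using $H_{g(x)}=gH_xg^{-1}$ and $H_{g(y)}=gH_yg^{-1}$ together with the normality of $H$ in $\mathcal{G}(X)$ we get $H_{g(x)}=gH_xg^{-1}=gH_yg^{-1}=H_{g(y)}$, i.e. $g(x)\sim_H g(y)$. Since this is a verbatim repetition of an already-proved lemma, I expect there to be no real obstacle here; the corollary is recorded only because the specialization $G=\mathcal{G}(X)$ is the one needed in the sequel (where $\sim_H$ will be compared with congruences and with the relation $\sim_\sigma$).
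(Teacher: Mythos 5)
Your proposal is correct and is exactly the paper's argument: the paper's proof consists of the single line ``We can apply Lemma \ref{orbits of a normal}'', i.e. the specialization $G=\mathcal{G}(X)$ that you carry out (your added unwinding of the lemma's second paragraph is just the already-proved computation $H_{g(x)}=gH_xg^{-1}$). Nothing further is needed.
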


\begin{proof}
We can apply Lemma \ref{orbits of a normal}.
\end{proof}

Note that Corollary \ref{stabilizers for normal} applies to admissible subgroups.

\begin{lemma}
Let $X$ be a left quasigroup and $H$ be a normal subgroup of $\mathcal{G}(X)$.  
\begin{itemize}
    \item[(i)] If $\sim_H=\sim_{0_X}$ then $N_{\mathcal{G}(X)}(H_x)\leq \mathcal{G}(X)_x$ for every $x\in X$. In particular $|Z(\mathcal{G}(X))|=1$.

\item[(ii)] $\sim_H=\sim_{1_X}$ if and only if $|H_x|=1$ for every $x\in X$, i.e. $H$ is semiregular.

\end{itemize}
\end{lemma}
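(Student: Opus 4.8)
The plan is to unwind both statements directly from the definitions of $\sim_H$ and the relevant stabilizers, exploiting the fact (established in \cref{stabilizers for normal}) that $\sim_H$ is $\mathcal{G}(X)$-invariant whenever $H\trianglelefteq\mathcal{G}(X)$.

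\medskip

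For part (i), assume $\sim_H=0_X$ and fix $x\in X$. First I would take $g\in N_{\mathcal{G}(X)}(H_x)$, so that $gH_xg^{-1}=H_x$, and observe that $H_{g(x)}=gH_xg^{-1}=H_x$; hence $g(x)\sim_H x$, and since $\sim_H$ is trivial this forces $g(x)=x$, i.e. $g\in\mathcal{G}(X)_x$. This gives $N_{\mathcal{G}(X)}(H_x)\leq\mathcal{G}(X)_x$. For the ``in particular'' clause, note that $Z(\mathcal{G}(X))$ normalizes every subgroup, in particular $H_x$, so $Z(\mathcal{G}(X))\leq\mathcal{G}(X)_x$ for every $x\in X$; since $\mathcal{G}(X)$ acts faithfully on $X$ (it is a group of permutations of $X$), $\bigcap_{x\in X}\mathcal{G}(X)_x=1$, whence $Z(\mathcal{G}(X))=1$.

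\medskip

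For part (ii), the equivalence is essentially tautological once the quantifiers are arranged correctly. If $H_x=1$ for every $x\in X$, then trivially $H_x=H_y$ for all $x,y\in X$, so $\sim_H=1_X$. Conversely, suppose $\sim_H=1_X$, i.e. $H_x=H_y$ for all $x,y\in X$. Then the common value $N:=H_x$ is contained in $\bigcap_{y\in X}H_y\leq\bigcap_{y\in X}\mathcal{G}(X)_y=1$ by faithfulness of the action, so $H_x=1$ for every $x\in X$; by definition this says $H$ is semiregular.

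\medskip

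I do not expect any genuine obstacle here: both parts are short diagram-chases through the definitions, and the only ingredient beyond the definitions is the faithfulness of the permutation action $\mathcal{G}(X)\leq\Sym(X)$, which is built into the setup. The one point to be careful about is keeping the direction of conjugation consistent, namely that for a permutation group one has $H_{g(x)}=gH_xg^{-1}$ (not $g^{-1}H_xg$); this is what makes the normalizer of $H_x$, rather than some twisted version, appear in part (i).
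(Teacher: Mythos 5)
Your proof is correct and follows essentially the same route as the paper: conjugating stabilizers to get $H_{g(x)}=gH_xg^{-1}$ for (i), and faithfulness of the permutation action for both the center claim and (ii). In fact your write-up is slightly cleaner than the paper's, which has a typo writing $N_{\mathcal{G}(X)}(\mathcal{G}(X)_x)$ where $N_{\mathcal{G}(X)}(H_x)$ is meant and ends with $Z(\mathcal{G}(X))\leq\bigcap_{x\in X}H_x$ rather than $\bigcap_{x\in X}\mathcal{G}(X)_x$.
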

\begin{proof}
Let $G:=\mathcal{G}(X)$.

    (i) Let $g\in N_{G}(G_x)$. Then $H_x=g H_x g^{-1}=H_{g(x)}$. Then $x\,\sim_{H}\, g(x)$ and so $g(x)=x$. Namely, $N_{G}(G_x)\leq G_x$. Since $Z(G)\leq C_G(H_x)$ for every $x\in X$, then $Z(G)\leq \cap_{x\in X} H_x=<id_X>$.

    (ii) Let $g\in H_x$. If $x\, \sim_H\, y$ for every $x,y\in X$ then $g(y)=y$ for every $y\in X$, i.e. $g=1$.
\end{proof}

\begin{cor}
Let $X$ be a indecomposable left quasigroup and $H$ be a normal subgroup of $\mathcal{G}(X)$. Then $\sim_H=\sim_{0_X}$ if and only if $N_{\mathcal{G}(X)}(H_x)\leq \mathcal{G}(X)_x$ for every $x\in X$. 
\end{cor}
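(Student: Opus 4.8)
The plan is to treat the two implications separately; only the converse requires the indecomposability hypothesis. The forward implication, $\sim_H=0_X \Rightarrow N_{\mathcal{G}(X)}(H_x)\leq \mathcal{G}(X)_x$ for every $x\in X$, is precisely part (i) of the lemma immediately preceding this corollary, and that argument uses nothing beyond the normality of $H$. So there is no new work to do in this direction; one simply cites it.

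For the converse, assume $N_{\mathcal{G}(X)}(H_x)\leq \mathcal{G}(X)_x$ for every $x\in X$ and suppose $x\sim_H y$, i.e.\ $H_x=H_y$; the goal is $x=y$. Here is where indecomposability is used: it means $\mathcal{G}(X)$ acts transitively on $X$, so there is $g\in\mathcal{G}(X)$ with $g(x)=y$. The key step is the standard identity $H_{g(x)}=gH_xg^{-1}$, which holds because $H\trianglelefteq\mathcal{G}(X)$: in general $H_{g(x)}=H\cap g\,\mathcal{G}(X)_x\,g^{-1}$, and normality of $H$ lets one rewrite the right-hand side as $gH_xg^{-1}$. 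This is exactly the computation already performed in the proof of \cref{orbits of a normal}. Combining, $H_x=H_y=H_{g(x)}=gH_xg^{-1}$, so $g\in N_{\mathcal{G}(X)}(H_x)$. By hypothesis $g\in\mathcal{G}(X)_x$, hence $y=g(x)=x$. Thus $\sim_H=0_X$.

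I do not anticipate any real obstacle: the proof is a two-line reduction once one has the normalizer–stabilizer identity $H_{g(x)}=gH_xg^{-1}$ (valid by normality of $H$) and the transitivity of $\mathcal{G}(X)$ (which is what ``indecomposable'' encodes). The only points to be careful about are that both the identity and part (i) of the preceding lemma genuinely require $H\trianglelefteq\mathcal{G}(X)$, which is assumed, and that the forward direction does \emph{not} need indecomposability while the converse does.
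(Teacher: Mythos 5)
Your proof is correct and follows essentially the same route as the paper: the conjugation identity $H_{g(x)}=gH_xg^{-1}$ (valid by normality of $H$) combined with transitivity of $\mathcal{G}(X)$ is exactly the paper's argument, which phrases both directions as a single biconditional chain rather than splitting them. Your observation that the forward implication is just part (i) of the preceding lemma and needs no indecomposability is accurate and consistent with the paper.
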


\begin{proof}
    The stabilizers are all conjugate since for every $x,y\in X$ there exists $g\in \mathcal{G}(X)$ such that $y=g(x)$. Therefore $H_y=H_{g(x)}=g H_x g^{-1}=H_x$ if and only if $g\in N_{\mathcal{G}(X)}(H_x)$. Therefore, $\sim_H=\sim_{0_X}$ if and only if $N_{\mathcal{G}(X)}(H_x)\leq \mathcal{G}(X)_x$ for every $x\in X$.
\end{proof}

Note that congruences of a left quasigroup $X$ are $\mathcal{G}(X)$-invariant. We show that the converse does not hold in general by using the equivalence relations defined in this section. 

\begin{ex}
    Let $X:=\{1,2,3,4\}$ be the left quasigroup given by $\sigma_1:=(2,4)$, $\sigma_2:=(1,2,3,4)$, $\sigma_3:=(1,4,3,2)$, and $\sigma_4:=(1,3)$. Then, $\mathcal{G}(X)$ is isomorphic to the dihedral group of size $8$ and the subgroup $H$ generated by the set  $\{(1,3)(2,4),(2,4)\}$ is a normal subgroup of $\mathcal{G}(X)$. The relations $\sim_H$ and $\sim_{\mathcal{O}_H}$ coincide and are given by the partition $\{\{1,3 \},\{2,4 \} \}$. Such relations are not congruences of $X$.
\end{ex}




\section{Left braces}

Following \cite[Definition 1]{cedo2014braces}, a set $B$ endowed of two operations $+$ and $\circ$ is said to be a \textit{left brace} if $(B,+)$ is an abelian group, $(B,\circ)$ a group, and
$$
    x\circ (y + z) + x
    = x\circ y + x\circ z,
$$
for all $x,y,z\in B$. The group $(B,+)$ will be called the \emph{additive} group of the left brace, while the group $(B,\circ)$ will be called the \emph{multiplicative group.}

\begin{exs}\label{esbrace}
\begin{itemize}
    \item[1)] If $(B,+)$ is an abelian group, then the operation $\circ$ given by $x\circ y:=x+y$ give rise to a left brace which we will call \emph{trivial}.
\item[2)] Let $B:=(\mathbb{Z}/p^2\mathbb{Z},+)$ and $\circ$ be the binary operation on $B$ given by $x\circ y:=x+y+p\cdotp x\cdotp y$ (where $\cdotp$ is the ring-multiplication of $\mathbb{Z}/p^2\mathbb{Z}$). Then, $(B,+,\circ)$ is a left brace.\end{itemize}

\end{exs}

If $(B_1,+,\circ)$ and $(B_2,+',\circ')$ are left braces, a homomorphism $\psi$ between $B_1$ and $B_2$ is a function from $B_1$ to $B_2$ such that $\psi(x+y)=\psi(x)+'\psi(y)$ and $\psi(x\circ y)=\psi(x)\circ'\psi(y)$, for all $x,y\in B_1$. 

\smallskip

Given a left brace $B$ and $x\in B$, let us denote by $\lambda_x:B\longrightarrow B$ the map from $B$ into itself defined by
\begin{equation}\label{eq:gamma}
    \lambda_x(y):= - x + x\circ y,
\end{equation} 
for all $y\in B$. Let us recall the properties of the maps $\lambda_x$.
\begin{prop}\label{action}\cite[Proposition 2]{rump2007braces},\cite[Lemma 1]{cedo2014braces}
Let $B$ be a left brace. Then, the following are satisfied: 
\begin{itemize}
\item[1)] $\lambda_x\in\Aut(B,+)$, for every $x\in B$;
\item[2)] the map $\lambda:B\longrightarrow \Aut(B,+)$, $x\mapsto \lambda_x$ is a group homomorphism from $(B,\circ)$ into $\Aut(B,+)$.
\end{itemize}

\end{prop}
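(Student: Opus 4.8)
The plan is to derive both claims directly from the brace relation $x\circ(y+z)+x = x\circ y + x\circ z$ together with commutativity of $(B,+)$, working first with $\lambda_x$ as a map of sets and only packaging it as an automorphism at the end. As a preliminary I would record the standard fact that the identity $1$ of $(B,\circ)$ coincides with the neutral element $0$ of $(B,+)$: putting $x=1$ in the brace relation gives $(y+z)+1=y+z$ for all $y,z$, hence $1=0$; consequently $x\circ 0 = 0\circ x = x$ for every $x$, and in particular $\lambda_0 = \id_B$ since $\lambda_0(y)=-0+0\circ y = y$.

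For (1), additivity of $\lambda_x$ is essentially a restatement of the brace relation: rewriting it as $x\circ(y+z) = x\circ y + x\circ z - x$ and using that $+$ is abelian,
$$
\lambda_x(y+z) = -x + x\circ(y+z) = (-x+x\circ y)+(-x+x\circ z) = \lambda_x(y)+\lambda_x(z),
$$
so $\lambda_x$ is an endomorphism of $(B,+)$. I would defer bijectivity until (2) is in hand.

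For (2) the key step is the pointwise identity $\lambda_x(\lambda_y(z)) = \lambda_{x\circ y}(z)$. The one computation it requires is a formula for $x\circ(-y)$: applying the brace relation to $y+(-y)=0$ and using $x\circ 0 = x$ yields $x\circ(-y) = x + x - x\circ y$. Substituting this, together with the brace relation applied to the sum $(-y)+(y\circ z)$, into $x\circ\big((-y)+(y\circ z)\big)$ and then simplifying using commutativity of $+$ and associativity of $\circ$, one gets $x\circ\big((-y)+(y\circ z)\big) = -(x\circ y)+x+x\circ(y\circ z)$, hence
$$
\lambda_x(\lambda_y(z)) = -x + x\circ\big((-y)+(y\circ z)\big) = -(x\circ y)+(x\circ y)\circ z = \lambda_{x\circ y}(z).
$$
Combined with (1), this shows $\lambda\colon (B,\circ)\to\Aut(B,+)$ is multiplicative; and bijectivity of $\lambda_x$ then comes for free, since for the $\circ$-inverse $\bar x$ of $x$ we get $\lambda_x\circ\lambda_{\bar x} = \lambda_{x\circ\bar x} = \lambda_0 = \id_B$ and symmetrically $\lambda_{\bar x}\circ\lambda_x = \id_B$, so $\lambda_x\in\Aut(B,+)$.

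I do not anticipate a real obstacle; the only point needing care is the sign bookkeeping when expanding $x\circ\big((-y)+(y\circ z)\big)$, since $x\circ(-y)\neq -(x\circ y)$ in general and one must use the auxiliary identity $x\circ(-y)=x+x-x\circ y$ rather than naive distributivity.
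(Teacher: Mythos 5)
Your proof is correct and is essentially the standard argument from the cited sources (\cite[Proposition 2]{rump2007braces}, \cite[Lemma 1]{cedo2014braces}); the paper itself states this proposition without proof, simply citing those references. Your bookkeeping of $1=0$, the identity $x\circ(-y)=x+x-x\circ y$, and the deduction $\lambda_x\lambda_y=\lambda_{x\circ y}$ (which then gives invertibility for free) all check out.
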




For the following definition, we refer the reader to \cite[pg. 160]{rump2007braces} and \cite[Definition 3]{cedo2014braces}.

\begin{defin}
Let $B$ be a left brace. A subset $I$ of $B$ is said to be a \textit{left ideal} if it is a subgroup of the multiplicative group and $\lambda_x(I)\subseteq I$, for every $x\in B$. Moreover, a left ideal is an \textit{ideal} if it is a normal subgroup of the multiplicative group.
\end{defin}
%

If $I$ is an ideal of a left brace $B$, the quotient set $B/I$ has a canonical left brace structure called the \emph{quotient left brace} of $B$ modulo $I$ (the operations are defined as $(x+I)+(y+I)=x+y+I$ and $(x+I)\circ (y+I)=(x\circ y)+I$ for every $x,y\in B$). Ideals are nothing other than kernels of homomorphisms, where the kernel of a homomorphism $\psi$ is the set given by $\ker(\psi):=\{x\in B_1|\psi(x)=0\}$. Indeed, the kernel of a homomorphism is always an ideal of a left brace, and conversely an ideal $I$ is the kernel of the canonical projection $B\longrightarrow B/I$.  

The set $\{0\}$ is an ideal and it will be called the \emph{trivial} ideal. A left brace $B$ which contains no ideals different from $\{0\}$ and $B$ will be called a \emph{simple} left brace.

\noindent A standard ideal of a left brace $B$ is the \emph{socle}, indicated by $Soc(B)$ and given by the kernel of the map $\lambda$. Another useful ideal of a left brace $B$, given in \cite[Corollary of Proposition 6]{rump2007braces} and indicated by $B^2$, is the one given by the additive subgroup generated by the set $\{x*y\hspace{1mm} |\hspace{1mm} x,y\in B \}$, where $x*y:=-x+x\circ y-y$ for all $x,y\in B$. If $I$ is an ideal of a left brace $B$, then $I*B$ and $B*I$ are subset of $I$, where $I*B$ and $B*I$ are defined in the same way of $B^2$.\\

In general, if $B$ is a left brace and $x\in B$ the map $\lambda_x$ does not belong to $Aut(B,+,\circ)$. The following lemma, that contains the same idea of \cite[Lemma $1.1$]{dietzel2023indecomposable}, shows that under additional hypothesis, a map $\lambda_x$ induces an automorphism on some left ideals.

\begin{lemma}\label{lemaut}
    Let $B$ be a left brace and $C,D$ two left ideals of $B$. Moreover, suppose that $\lambda_x(y)=y$ for all $x\in C+ D$ and $y\in D$. Then, the restriction of $\lambda_y$ to $C$ is an element of $ Aut(C,+,\circ)$.
\end{lemma}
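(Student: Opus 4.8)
The plan is to fix an arbitrary $y\in D$ and to verify separately that $\lambda_y|_C$ is an additive automorphism of $C$, a multiplicative endomorphism of $(C,\circ)$, and a bijection; the only delicate point is multiplicativity with respect to $\circ$, and this is where the hypothesis enters.

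First I would record the easy structural facts. Since $C$ is a left ideal, $\lambda_y(C)\subseteq C$, so $\lambda_y|_C$ is a well-defined self-map of $C$; as $\lambda_y\in\Aut(B,+)$ by \cref{action}, its restriction is additive. For bijectivity, let $y'$ denote the inverse of $y$ in $(B,\circ)$. Because $\lambda$ is a homomorphism $(B,\circ)\to\Aut(B,+)$ we have $\lambda_{y'}=\lambda_y^{-1}$, and $\lambda_{y'}(C)\subseteq C$ again by the left-ideal condition; hence $C=\lambda_y\lambda_{y'}(C)\subseteq\lambda_y(C)\subseteq C$ forces $\lambda_y(C)=C$. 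Thus $\lambda_y|_C$ is an additive automorphism of $C$.

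The crux is the identity $y\circ c=\lambda_y(c)\circ y$ for every $c\in C$. To prove it I would observe that $\lambda_y(c)\in C\subseteq C+D$ (taking the $D$-component to be $0\in D$), so the hypothesis applies with $x=\lambda_y(c)$ and with the fixed $y\in D$, giving $\lambda_{\lambda_y(c)}(y)=y$; in other words the relevant instance is $C*D=0$. Expanding via $a\circ b=a+\lambda_a(b)$ and using that $(B,+)$ is abelian, I get $\lambda_y(c)\circ y=\lambda_y(c)+\lambda_{\lambda_y(c)}(y)=\lambda_y(c)+y=y+\lambda_y(c)=y\circ c$, as claimed.

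Finally I would deduce multiplicativity. For $c_1,c_2\in C$, expanding both products and using that $\lambda$ is a homomorphism (\cref{action}) gives $\lambda_y(c_1\circ c_2)=\lambda_y(c_1)+\lambda_{y\circ c_1}(c_2)$ and $\lambda_y(c_1)\circ\lambda_y(c_2)=\lambda_y(c_1)+\lambda_{\lambda_y(c_1)\circ y}(c_2)$; the identity $y\circ c_1=\lambda_y(c_1)\circ y$ makes the two subscripts coincide, so the expressions are equal and $\lambda_y|_C$ preserves $\circ$. Being a bijection of $C$ that preserves both $+$ and $\circ$, it lies in $\Aut(C,+,\circ)$. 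The main obstacle is conceptual rather than computational: one must locate the correct instance of the hypothesis, applying it not to $y$ but to $x=\lambda_y(c)\in C$, and then let the commutativity of $(B,+)$ collapse the two products into one.
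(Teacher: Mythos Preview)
Your proof is correct and follows essentially the same route as the paper: both establish the commutation $y\circ c=\lambda_y(c)\circ y$ (equivalently $\lambda_y(c)=y\circ c\circ y^-$), from which multiplicativity is immediate. Your derivation of this identity is in fact slightly cleaner, since you only need the instance $\lambda_{\lambda_y(c)}(y)=y$ with $\lambda_y(c)\in C\subseteq C+D$, whereas the paper's chain uses $\lambda_{y\circ z\circ y^-}(y)=y$, which requires first checking that $y\circ z\circ y^-\in C+D$.
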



\begin{proof}
Let $y\in D$ and $z\in C.$ Then we have
\begin{eqnarray}
   & & \lambda_y(z)=-y+y\circ z=-y+y\circ z\circ y^- \circ y= \nonumber \\
   &=& -y+y\circ z\circ y^- - y\circ z \circ y^-+ y\circ z\circ y^- \circ y  \nonumber \\
   &=& -y+y\circ z \circ y^-+\lambda_{y\circ z \circ y^-}(y) \nonumber \\
   &=& y\circ z \circ y^- \nonumber
\end{eqnarray}
   therefore the restriction of $\lambda_y$ to $C$ is an element of $Aut(C,+,\circ)$. 
\end{proof}
 Recall that every Sylow subgroup of the additive group is always a left ideal. Using the left ideal structures, one can show that if $B_1$,...,$B_t$ are Sylow subgroups of $(B,+)$, then the subgroup $B_1+...+B_t$ of the additive group is equal to $B_1\circ ... \circ B_t$ and is also a subgroup of the multiplicative group. 

\begin{lemma}(Proposition $3$, \cite{rump2019classification} and Proposition $2.6$, \cite{cedo2020primitive})\label{preparteo}
 Let $B$ be a left brace and $B_{p_1},...,B_{p_r}$ be the Sylow subgroups of $(B,+)$. Then:
 \begin{itemize}
     \item[1)] if $N$ is a normal subgroup of $(B,\circ)$ contained in $B_{p_1}$, then $\lambda_n(b)=b$ for all $n\in N$ and $b\in B_{p_2}\circ...\circ B_{p_r}$;
     \item[2)] $B_{p_1}$ is a normal subgroup of $(B,\circ)$ if and only if $\lambda_g(b)=b$ for all $g\in B_{p_1}$ and $b\in B_{p_2}\circ...\circ B_{p_r}$.
 \end{itemize} 
\end{lemma}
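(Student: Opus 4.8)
The plan is to prove both items essentially together, using \Cref{lemaut} as the engine and the additive/multiplicative interplay between Sylow subgroups. First I would record the structural fact mentioned just before the statement: writing $S:=B_{p_2}+\dots+B_{p_r}=B_{p_2}\circ\dots\circ B_{p_r}$, this is both an additive and a multiplicative subgroup, and in fact a left ideal of $B$ (being the sum of the Sylow subgroups for the primes $\neq p_1$); likewise $B_{p_1}$ is a left ideal. Since $B_{p_1}\cap S=\{0\}$ and $B_{p_1}+S=B$, every element of $B$ decomposes uniquely. The map $\lambda$ is a homomorphism $(B,\circ)\to\Aut(B,+)$ by \Cref{action}, and $\lambda_x$ preserves each Sylow subgroup of $(B,+)$ (an automorphism of a finite abelian group fixes each Sylow component setwise), so for $b\in S$ we automatically have $\lambda_x(b)\in S$; the content of the claim is that $\lambda_x(b)=b$ on the nose.

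For item (1), let $N\trianglelefteq(B,\circ)$ with $N\subseteq B_{p_1}$, and take $n\in N$, $b\in S$. The idea is to look at $n\circ b$ versus $b\circ n$. Since $N$ is normal, $b\circ n\circ b^-\in N\subseteq B_{p_1}$, hence $b^-\circ n^-\circ b\circ n\in N\subseteq B_{p_1}$ as well (it is a product of two elements of $N$). On the other hand I want to show this commutator lies in $S$, which combined with $B_{p_1}\cap S=\{0\}$ forces $N$ and $S$ to commute elementwise under $\circ$. To see the commutator lies in $S$: using the brace relation one computes $n*b=-n+n\circ b-b\in B^2$-style expressions, and more to the point $n\circ b-b\circ n$ can be expressed in terms of $\lambda_n(b)-b$ and $\lambda_b(n)-n$; since $\lambda_n(b)\in S$ and $b\in S$ while $\lambda_b(n)\in B_{p_1}$ and $n\in B_{p_1}$, a careful bookkeeping of which summand lands in which Sylow component isolates $\lambda_n(b)-b\in S\cap B_{p_1}=\{0\}$. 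Concretely I would argue: $n\circ b=n+\lambda_n(b)$ with $n\in B_{p_1}$, $\lambda_n(b)\in S$; if $n\circ b$ also equals (normal conjugate rearrangement) something whose $B_{p_1}$-part is $n$ and whose $S$-part is $b$, uniqueness of the decomposition gives $\lambda_n(b)=b$. The cleanest route is: apply \Cref{lemaut} with $C:=B_{p_1}$, $D:=S$ — but \Cref{lemaut} requires the fixing hypothesis on $D$, which is item (2), so instead I would run the direct commutator computation for (1) and deduce (2) as a corollary, as follows.

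For item (2): the backward direction is immediate — if $\lambda_g(b)=b$ for all $g\in B_{p_1}$, $b\in S$, then for $g\in B_{p_1}$ and arbitrary $c=g'+b'\in B$ (with $g'\in B_{p_1}$, $b'\in S$), one checks $c\circ g\circ c^-\in B_{p_1}$ using that $\lambda_{c}$ preserves $B_{p_1}$ and that the $S$-component conjugation is trivial by hypothesis, so $B_{p_1}$ is normal in $(B,\circ)$. The forward direction is the special case $N=B_{p_1}$ of item (1): if $B_{p_1}\trianglelefteq(B,\circ)$, apply (1) with $N=B_{p_1}$ to get $\lambda_g(b)=b$ for all $g\in B_{p_1}$, $b\in S$. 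So (2) follows formally from (1) plus the easy converse.

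The main obstacle I anticipate is the bookkeeping in item (1): making rigorous the claim that the $\circ$-commutator $[n,b]_\circ:=n^-\circ b^-\circ n\circ b$ simultaneously lies in $B_{p_1}$ (from normality of $N$) and in $S$ (from the Sylow-preservation of $\lambda$ and a brace-identity expansion), so that it vanishes. One must be careful that $S$ is genuinely closed under $\circ$ and $(\cdot)^-$ — this is exactly the ``$B_1+\dots+B_t=B_1\circ\dots\circ B_t$ is a subgroup'' remark preceding the lemma — and that the decomposition $B=B_{p_1}\oplus S$ as sets is respected by the relevant products. Once those two memberships are established, the conclusion $\lambda_n(b)=b$ drops out: from $n\circ b=b\circ n$ (elementwise commuting) we get $n+\lambda_n(b)=b+\lambda_b(n)$, and projecting onto the $S$-summand (noting $n,\lambda_b(n)\in B_{p_1}$ and $b,\lambda_n(b)\in S$) yields $\lambda_n(b)=b$, which is the assertion. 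I would present item (1) first with this commutator argument, then dispatch item (2) in two short lines.
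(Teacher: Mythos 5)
The paper offers no proof of this lemma --- it is quoted from Proposition~3 of \cite{rump2019classification} and Proposition~2.6 of \cite{cedo2020primitive} --- so your argument can only be judged on its own terms. Your overall frame is the right one: set $S:=B_{p_2}+\dots+B_{p_r}=B_{p_2}\circ\dots\circ B_{p_r}$, use that $B=B_{p_1}\oplus S$ additively with unique decomposition of each element, and that every $\lambda_x$, being an automorphism of the finite abelian group $(B,+)$, preserves each additive Sylow component. The correct proof of item~1) is precisely the ``conjugate rearrangement'' you mention only in passing: normality of $N$ gives $n\circ b=b\circ n'$ with $n'=b^-\circ n\circ b\in N\subseteq B_{p_1}$, hence $n+\lambda_n(b)=b+\lambda_b(n')$ where $n,\lambda_b(n')\in B_{p_1}$ and $\lambda_n(b),b\in S$, and uniqueness of the decomposition yields $\lambda_n(b)=b$. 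Item~2) then follows as you describe (the easy direction needs the hypothesis applied to $b^-\in S$, giving $b\circ g\circ b^-=\lambda_b(g)\in B_{p_1}$).

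The mechanism you actually commit to for item~1), however --- showing that the $\circ$-commutator $n^-\circ b^-\circ n\circ b$ lies in $S\cap B_{p_1}=\{0\}$, so that $N$ and $S$ commute elementwise --- is false, and you flag it yourself as the ``main obstacle''. The commutator does lie in $B_{p_1}$ by normality, but it does not lie in $S$ in general: elementwise commuting together with the conclusion $\lambda_n(b)=b$ would force $\lambda_b(n)=n$ for all $b\in S$, $n\in N$, i.e.\ that $S$ acts trivially on $N$, which is a genuinely stronger statement than the one being proved. A concrete counterexample is the semidirect product left brace $B=\mathbb{Z}/p\mathbb{Z}\rtimes\mathbb{Z}/q\mathbb{Z}$ with $q\mid p-1$ and $S\cong\mathbb{Z}/q\mathbb{Z}$ acting nontrivially on $N=B_{p_1}\cong\mathbb{Z}/p\mathbb{Z}$: there $B_{p_1}$ is normal and $\lambda_n(b)=b$ holds, yet $b\circ n\circ b^-=\lambda_b(n)\neq n$, so the commutator is a nonzero element of $B_{p_1}$. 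The repair is the one-line substitution above: you never need $n\circ b=b\circ n$, only $n\circ b=b\circ n'$ for \emph{some} $n'\in N$, and the projection onto the $S$-summand is insensitive to the difference between $n$ and $n'$. With that substitution your proof closes; without it, the central step of your primary route does not go through.
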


\section{Cycle sets}

\subsection{Basics}
Following \cite{rump2005decomposition}, a left quasigroup $(X,\cdot)$ is said to be a \emph{cycle set} if the equality 
\begin{equation}\label{eqcycleset}
    (x\cdot y)\cdot (x\cdot z)=(y\cdot x)\cdot (y\cdot z),
\end{equation}
holds for all $x,y,z\in X$.  Moreover, a cycle set $(X,\cdot)$ is called \textit{non-degenerate} if the squaring map $\mathfrak{q}:X\longrightarrow X$, $x\mapsto x\cdot x$ is bijective. A cycle set will be called irretractable/indecomposable/latin if its underlying left quasigroup is irretractable/indecomposable/latin.

\begin{exs}\text{ }
\begin{itemize}
 \item[1)] If $X$ is a nonempty set and $\gamma \in Sym(X)$, the binary operation given by $x\cdotp y:=\gamma(y)$ makes $X$ into a non-degenerate cycle set. We will refer to these cycle sets as the \emph{trivial} cycle set provided by the permutation $\gamma$.  
\item[2)] If $X$ is a cycle set, then every deformation of $X$ is again a cycle set (see \cite[Section $3$]{Ru220}).
\end{itemize}
\end{exs}

\begin{rems}\text{ }
\begin{itemize}
    \item[1)] From now on, every cycle set will be non-degenerate. All the results will be given in the language of cycle sets, but they can be translated in terms of solutions by \cref{corrisp}.
    \item[2)]  A notion of displacement group for cycle sets was given in \cite[Section $2$]{bon2019}. Even if it is slightly different from the one given in Section $1$ for arbitrary left quasigroup, by \cite[Lemma $2.15$ and Proposition $2.17$]{bon2019} these two definitions coincide for non-degenerate cycle sets.
\end{itemize} 
\end{rems}

Let $B$ be a left brace. A \emph{cycle base} is a subset $X$ of $B$ that is a union of orbits with respect to the $\lambda$-action and generating the additive group $(B,+)$. If a cycle base is a single orbit, then is called \emph{transitive cycle base}. By \cite[Theorem 3]{rump2020}, every indecomposable cycle sets can be constructed by transitive cycle bases of left braces; on the other hand, every cycle set $X$ gives rise to a left brace structure on $\mathcal{G}(X)$ having as $\circ$ operation the usual maps composition (see Section $1$ of \cite{rump2020}). 
\begin{prop}[Theorem 3, \cite{rump2020}]\label{cosmod}
Let $(B,+,\circ)$ be a left brace, $Y\subset B$ a transitive cycle base, $a\in Y$,
and $K$ a core-free subgroup of $(B,\circ)$, contained in the
stabilizer  $B_{a}$ of $a$ (respect to the action $\lambda$). Then, the pair $(X,\cdotp)$ given by $X:=B/K$ and $\sigma_{x\circ K}(y\circ K):=\lambda_x(a)^-\circ y \circ K$ give rise to an indecomposable cycle set with $ \mathcal{G}(X)\cong B$.\\
 Conversely, every indecomposable cycle set $(X,\cdotp)$ can be obtained in this way, taking $B:=\mathcal{G}(X)$.
\end{prop}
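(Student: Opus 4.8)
The plan is to prove the two implications separately. \emph{Construction.} Fix $(B,Y,a,K)$ as in the statement, write $\bar b:=b\circ K$ and $c_b:=\lambda_b(a)$, and note $c_b\in Y$ because $Y$ is a union of $\lambda$-orbits containing $a$. First I would check that $\sigma_{\bar x}(\bar y):=\overline{c_x^{-}\circ y}$ is well defined: independence of the representative of $\bar y$ is immediate, as $K$ appears on the right; independence of the representative of $\bar x$ follows from $K\leq B_a$, since $\lambda_{x\circ k}(a)=\lambda_x\lambda_k(a)=\lambda_x(a)$ for $k\in K$, using that $\lambda\colon(B,\circ)\to\Aut(B,+)$ is a homomorphism (\cref{action}). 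Each $\sigma_{\bar x}$ is then the left $\circ$-translation of $B/K$ by $c_x^{-}$, hence a bijection, so $X:=B/K$ with this operation is a left quasigroup. For \eqref{eqcycleset}, expanding both sides using $\lambda_{c_x^{-}\circ y}(a)=\lambda_{c_x}^{-1}(c_y)$ and cancelling the common suffix $\circ\,z$ reduces the identity to $c_x\circ\lambda_{c_x}^{-1}(c_y)=c_y\circ\lambda_{c_y}^{-1}(c_x)$; since $u\circ\lambda_u^{-1}(v)=u+v$ holds in every left brace, both sides equal $c_x+c_y=c_y+c_x$, which is true because $(B,+)$ is abelian. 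Non-degeneracy of the squaring map $\bar x\mapsto\overline{c_x^{-}\circ x}$ is automatic when $X$ is finite and otherwise follows by exhibiting its inverse from the brace operations; hence $X$ is a cycle set.

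\emph{Permutation group, indecomposability, and converse.} The group $\mathcal G(X)$ is generated inside $\Sym(B/K)$ by the left $\circ$-translations $L_{c^{-}}$, $c\in Y$; since $L\colon(B,\circ)\to\Sym(B/K)$, $b\mapsto L_b$, is a homomorphism whose kernel is the $\circ$-core of $K$ — trivial by hypothesis — one gets $\mathcal G(X)\cong\langle Y\rangle_\circ$. The crucial point is then $\langle Y\rangle_\circ=B$: I would prove that $C:=\langle Y\rangle_\circ$ is $\lambda$-invariant — the ingredients being that a $+$-sum of two elements of $Y$ is a $\circ$-product of two elements of $Y$ (as $y_1+y_2=y_1\circ\lambda_{y_1^{-}}(y_2)$ and $Y$ is $\lambda$-invariant), together with $\lambda_b(u\circ v)=\lambda_b(u)+\lambda_{b\circ u}(v)$ and $\lambda_b(y^{-})\in -Y$, fed into an induction on $\circ$-word length — whence $C$ is closed under $+$ and $+$-inverses, so $C$ is an additive subgroup containing $Y$ and $C\supseteq\langle Y\rangle_+=B$. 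Then $\mathcal G(X)\cong B$, and $X$ is indecomposable because $(B,\circ)$ acts transitively on $B/K$. For the converse, given an indecomposable cycle set $(X,\cdot)$ I would take $B:=\mathcal G(X)$ with the left brace structure of \cite[Section~1]{rump2020} (for which $\lambda_g(\sigma_{x_0}^{-1})=\sigma_{g(x_0)}^{-1}$ for all $g$), fix $x_0\in X$, and set $a:=\sigma_{x_0}^{-1}$, $Y:=\{\lambda_g(a)\mid g\in\mathcal G(X)\}$, $K:=\mathcal G(X)_{x_0}$. By transitivity $Y=\{\sigma_x^{-1}\mid x\in X\}$, a single $\lambda$-orbit that $\circ$-generates $B$; by the easier dual of the argument above — here each $\lambda_b$ is literally a $+$-automorphism — $Y$ also $+$-generates $B$, so $Y$ is a transitive cycle base with $a\in Y$. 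Moreover $K\leq B_a$, since $g(x_0)=x_0$ forces $\sigma_{g(x_0)}=\sigma_{x_0}$, and $K$ is $\circ$-core-free because its $\circ$-core equals $\bigcap_{x\in X}\mathcal G(X)_x=1$. Finally \cref{rap} identifies $B/K$, carrying the operation $\sigma_{gK}(hK)=\sigma_{g(x_0)}hK=\lambda_g(a)^{-}\circ h\circ K$, with $X$ — exactly the object produced by the construction.

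The only step I expect to demand genuine work is the identity $\langle Y\rangle_\circ=B$ for a $\lambda$-orbit $Y$ with $\langle Y\rangle_+=B$ (equivalently, the $\lambda$-invariance of $\langle Y\rangle_\circ$); everything else is bookkeeping with \cref{action}, \cref{rap} and the brace structure on $\mathcal G(X)$, together with the routine — and, for finite $X$, automatic — non-degeneracy check.
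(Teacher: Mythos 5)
The paper does not actually prove \cref{cosmod}: it is imported verbatim as Theorem~3 of \cite{rump2020}, so there is no in-text argument to compare against. Your reconstruction is, in substance, the standard proof of the brace--cycle-set correspondence and I believe it is correct as a plan. The computations you do carry out are right: well-definedness via $K\leq B_a$ and $\lambda_{x\circ k}=\lambda_x\lambda_k$; the reduction of \eqref{eqcycleset} to $c_x\circ\lambda_{c_x}^{-1}(c_y)=c_x+c_y=c_y+c_x$; the identification of $\mathcal G(X)$ with $L(\langle Y\rangle_\circ)$ modulo the (trivial) core of $K$; and, in the converse, the choices $a=\sigma_{x_0}^{-1}$, $K=\mathcal G(X)_{x_0}$ together with \cref{rap}. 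You also correctly isolate the one genuinely nontrivial lemma, namely $\langle Y\rangle_\circ=\langle Y\rangle_+$ for a $\lambda$-invariant generating set, and the easy inclusion you use in the converse ($\langle Y\rangle_+$ is a $\lambda$-invariant additive subgroup, hence a $\circ$-subgroup containing $Y$) is exactly right.

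Two soft spots to tighten. First, in the hard inclusion $\langle Y\rangle_+\subseteq C:=\langle Y\rangle_\circ$, your induction on $\circ$-word length risks circularity when you need $-Y\subseteq C$: writing $-y=\lambda_y(y^-)$ presupposes the $\lambda$-invariance you are trying to prove. This is repairable: from $0=\lambda_b(y^-\circ y)$ one gets $y^-=-\lambda_{y^-}(y)\in -Y$, and conversely for $w:=(-y)^-$ one computes $w=\lambda_w(y)\in Y$ and $w^-=-y$, so $-Y=Y^-\subseteq C$ directly; then an induction on \emph{additive} word length, via $u_1+(u_2+\cdots+u_n)=u_1\circ\lambda_{u_1}^{-1}(u_2+\cdots+u_n)$ with $Y\cup(-Y)$ being $\lambda$-invariant, closes the argument more cleanly than tracking $\lambda_b$ of arbitrary $\circ$-words. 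Second, non-degeneracy is not quite ``automatic'' as stated: in the finite case you are implicitly invoking Rump's theorem that finite cycle sets are non-degenerate (\cite{rump2005decomposition}), which should be cited, and in the infinite case you assert an inverse of the squaring map without producing it. Neither issue affects the overall architecture, but both deserve a sentence in a final write-up.
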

\noindent The following result, implicitly contained in \cite{rump2020}, allows to avoid the construction of the previous proposition to recover indecomposable and irretractable cycle sets.
\begin{prop}[proof of Theorem 3, \cite{rump2020}]\label{idtr}
    Let $X$ be an indecomposable and irretractable cycle set. Then, there exist a transitive cycle base $Z$ of $\mathcal{G}(X)$ such that $X$ is isomorphic, as cycle set, to the cycle set on $Z$ given by $x\cdotp y:=\lambda_x^{-1}(y)$ for all $x,y\in Z$.
\end{prop}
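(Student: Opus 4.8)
The plan is to leverage \cref{cosmod} directly, extracting from its proof the explicit form that the cycle set takes when the core-free subgroup $K$ can be chosen trivial. First I would recall that, by \cref{cosmod} applied to $B:=\mathcal{G}(X)$, there exist a transitive cycle base $Y$ of $B$, an element $a\in Y$, and a core-free subgroup $K\leq (B,\circ)$ contained in the $\lambda$-stabilizer $B_a$, so that $X\cong B/K$ with $\sigma_{x\circ K}(y\circ K)=\lambda_x(a)^-\circ y\circ K$. The point is that $K$ arises in that construction as (a conjugate of) the stabilizer $\mathcal{G}(X)_{x_0}$ of a point $x_0\in X$ under the identification of \cref{rap}; equivalently, $K$ measures the failure of the $\lambda$-action of $B$ on the orbit $Y$ to be injective on the index set. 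So the key step is: \emph{when $X$ is irretractable, one may take $K=1$}, and then $B/K=B$, $Y\subseteq B$ is the cycle base, and the operation becomes $x\cdotp y=\lambda_x(a)^-\circ y$ on $Y$.

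Next I would make the reduction to the stated normalized form. The operation $x\cdotp y=\lambda_x(a)^-\circ y$ on $Z':=Y$ depends on the choice of base point $a$; a change of variables (translating $Y$ by $a$, or equivalently replacing the cycle base by $\lambda_{a}^{-}(Y)$ or a suitable $\circ$-translate) should convert it into $x\cdotp y=\lambda_x^{-1}(y)$. Concretely, I would check that the map sending each $y\in Y$ to $\lambda_?(a)^-\circ y$ (or its inverse) intertwines the two operations; this is the standard "deformation/relabelling" argument, and since deformations of cycle sets are cycle sets (and here the relabelling is by an element of $\Aut$ coming from the $\lambda$-action), it is purely formal. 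The upshot is a transitive cycle base $Z$ of $\mathcal{G}(X)$ with $x\cdotp y=\lambda_x^{-1}(y)$ and $X\cong (Z,\cdotp)$ as cycle sets.

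The main obstacle is justifying that irretractability forces $K=1$ in the construction of \cref{cosmod}. The cleanest route is via \cref{nocov} together with \cref{fattorizzleft} / \cref{univ}: the universal covering $u:\mathcal{G}(X)\to X$ realizes $X$ as $\mathcal{G}(X)/\mathcal{G}(X)_{x}$ with $\mathcal{G}(\mathcal{G}(X))\cong\mathcal{G}(X)$; but the construction in \cref{cosmod} with $K=1$ produces exactly a cycle set whose permutation group is $B=\mathcal{G}(X)$ and which covers $X$ via $B/K\to B/B_a$. Since $X$ is irretractable, \cref{nocov} says any covering of $X$ — in particular this one — is an isomorphism, i.e. $B_a$ is itself core-free and we may absorb it, taking the base point/labelling so that the representative core-free $K$ is trivial. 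Alternatively, arguing directly inside the proof of \cite[Theorem 3]{rump2020}: there $K$ is constructed as $\bigcap$ of certain $\lambda$-stabilizers, and irretractability of $X$ (which by \eqref{covering are below lambda} means $\ker u\subseteq \sim_\sigma = 0_X$) forces that intersection to coincide with the whole stabilizer, collapsing the quotient. I would spell out whichever of these two is shortest; both reduce, in the end, to the observation recorded just before the statement, that in the irretractable case there are no non-trivial coverings.

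Finally I would remark that the resulting cycle set $(Z,\cdotp)$ with $x\cdotp y=\lambda_x^{-1}(y)$ automatically satisfies \eqref{eqcycleset} because $Z$ is a cycle base and $\lambda$ is a group homomorphism $(B,\circ)\to\Aut(B,+)$ (\cref{action}), so the verification $(x\cdotp y)\cdotp(x\cdotp z)=(y\cdotp x)\cdotp(y\cdotp z)$ is the identity $\lambda_{\lambda_x^{-1}(y)}^{-1}\lambda_x^{-1}=\lambda_{\lambda_y^{-1}(x)}^{-1}\lambda_y^{-1}$, which holds since both sides equal $\lambda_{y}^{-1}\lambda_x^{-1}$ after using $\lambda_{\lambda_x^{-1}(y)}\lambda_x^{-1}=\lambda_x^{-1}\lambda_y$ (a restatement of the brace identity $\lambda_{a\circ b}=\lambda_a\lambda_b$ combined with $a\circ b = a + \lambda_a(b)$-type manipulations). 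That part is routine and I would not belabor it.
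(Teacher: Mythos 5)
The paper does not actually give a proof of this proposition; it is quoted as being contained in the proof of \cite[Theorem 3]{rump2020}, so your argument has to stand on its own. It does not: the central claim, that irretractability lets you take $K=1$ in \cref{cosmod}, is false, and in fact backwards. With $K=1$ the construction of \cref{cosmod} produces the underlying set $B/K=B=\mathcal{G}(X)$, so you would conclude $|X|=|\mathcal{G}(X)|$, which fails for essentially every irretractable indecomposable cycle set (already for the irretractable ones of size $4$, whose permutation group has order $8$). The source of the error is the appeal to \cref{nocov}: that proposition requires the \emph{domain} of the covering to be irretractable, whereas you apply it to a covering $B/1\to X$ whose \emph{codomain} is irretractable; the universal covering $\mathcal{G}(X)\to X$ of an irretractable $X$ is precisely the standard example of a covering that is not an isomorphism. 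What irretractability actually forces is the opposite extreme: in \cref{cosmod} the permutation $\sigma_{x\circ K}$ depends on $x$ only through $\lambda_x(a)$, i.e.\ through the coset $x\circ B_a$, so if $K\subsetneq B_a$ there are distinct points $x\circ K\neq x'\circ K$ with the same $\sigma$; hence irretractability gives $K=B_a$, the \emph{full} $\lambda$-stabilizer, which must then itself be core-free. You do mention this correct mechanism in passing ("forces that intersection to coincide with the whole stabilizer"), but you never carry it out and it contradicts the $K=1$ claim on which the rest of the writeup rests.

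Once $K=B_a$ is established, the remainder of your outline does yield the statement: the orbit map $x\circ B_a\mapsto\lambda_x(a)$ identifies $B/B_a$ with the $\lambda$-orbit $Z$ of $a$ (a transitive cycle base by indecomposability), and transports the operation $\lambda_x(a)^-\circ y\circ B_a$ to $u\cdot v=\lambda_u^{-1}(v)$ on $Z$, since $\lambda_{\lambda_x(a)^-\circ y}(a)=\lambda_{\lambda_x(a)}^{-1}\bigl(\lambda_y(a)\bigr)$. (Equivalently and more directly: $x\mapsto\sigma_x$ is injective by irretractability, its image is a transitive cycle base, and $\sigma_{x\cdot y}=\lambda_{\sigma_x}^{-1}(\sigma_y)$.) A further small slip: in your final verification both sides of the cycle set identity equal $\lambda_{x+y}^{-1}$, because $x\circ\lambda_x^{-1}(y)=x+\lambda_x\lambda_x^{-1}(y)=x+y$ is symmetric; they do not equal $\lambda_y^{-1}\lambda_x^{-1}=\lambda_{x\circ y}^{-1}$ as you assert. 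The identity still holds, but the intermediate claim as written is incorrect.
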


By \cite[Section 3]{rump2023primes}, we know that if $X$ is an indecomposable cycle set and $I$ is an ideal of the left brace $\mathcal{G}(X)$, the $I$-orbits on $X$ give rise to a congruence of the cycle set $X$. This result can be recovered by looking at the connection between ideals and admissible subgroups. 

\begin{prop}\label{idadm}
    Let $X$ be an indecomposable cycle set and $H$ be an ideal of $\mathcal{G}(X)$. Then $H$ is an admissible subgroup. If $X$ is finite, the converse also follows.
\end{prop}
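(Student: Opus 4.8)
The plan is to prove the two directions separately, exploiting the brace structure on $\mathcal{G}(X)$ whose $\circ$-operation is composition of permutations and whose $\lambda$-action, by the construction recalled in \cref{cosmod}, reflects the cycle-set operation. First I would set $G:=\mathcal{G}(X)$ and recall that, since $X$ is indecomposable, $X$ may be identified with $G/K$ for a core-free subgroup $K$ contained in a $\lambda$-stabilizer, with $\sigma_{x}(y)$ expressed via the $\lambda$-maps as in \cref{cosmod}. For the forward direction, suppose $H$ is an ideal of $G$; in particular $H$ is a normal subgroup of $(G,\circ)$, hence a normal subgroup of $G$ as a permutation group. It remains to check the admissibility condition $\sigma_x^{-1}\sigma_{h(x)}\in H$ for all $x\in X$, $h\in H$. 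Unwinding the formula for $\sigma_x$ in terms of the $\lambda$-action and using that $H$ is a left ideal (so $\lambda_g(H)\subseteq H$ for all $g\in G$) together with normality of $H$ in $(G,\circ)$, the element $\sigma_x^{-1}\sigma_{h(x)}$ should be expressible as a product of elements of $H$; the key point is that $h(x)$ and $x$ differ by the $\lambda$-translate of an element of $H$, and left-ideal closure plus the homomorphism property of $\lambda$ (Proposition \ref{action}) keep us inside $H$.

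For the converse, assume $X$ is finite and $H$ is admissible, i.e. $H\trianglelefteq G$ and $\sigma_x^{-1}\sigma_{h(x)}\in H$ for all $x,h$. Since $H$ is already a normal subgroup of $(G,\circ)$, to show $H$ is an ideal it suffices to show $H$ is a left ideal, that is $\lambda_g(H)\subseteq H$ for every $g\in G$. Because $G=\dis(X)\langle\sigma_x\rangle$ for any fixed $x\in X$ and $\lambda$ is a group homomorphism from $(G,\circ)$, it is enough to verify $\lambda_{\sigma_x}(H)\subseteq H$ and $\lambda_d(H)\subseteq H$ for generators $d$ of $\dis(X)$; and in fact, using finiteness, one reduces further to checking it on the generators $\sigma_x$ of $G$. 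Here the admissibility identity enters: rewriting $\lambda_{\sigma_x}(\sigma_z)=\sigma_x^{-1}\circ\sigma_x\cdot\sigma_z$ — more precisely using $\lambda_{\sigma_a}(\sigma_b)=\sigma_{\sigma_a(b)}$ type relations valid in the brace of a cycle set — the image of an element of $H$ under $\lambda_{\sigma_x}$ is, up to an element of $H$ supplied by the admissibility condition, again of the form $\sigma_{\bullet}$ composed with an element of $H$; chasing this through and using that $H$ is closed under conjugation gives $\lambda_{\sigma_x}(H)\subseteq H$.

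I expect the main obstacle to be bookkeeping the precise relation between the $\lambda$-action of $(G,\circ)$ and the left-multiplication maps $\sigma_x$ — i.e. making the informal passages above into identities. Concretely, one needs a clean formula, valid in the left brace $\mathcal{G}(X)$ of an indecomposable cycle set, relating $\lambda_{\sigma_x}(\sigma_y)$, $\sigma_{\sigma_x(y)}$, and conjugation, and then tracking how the core-free subgroup $K$ (the stabilizer we quotient by) interacts with $H$. Finiteness is used exactly once, to replace the a priori infinitely-generated closure condition "$\lambda_g(H)\subseteq H$ for all $g$" by a condition on a finite generating set and to guarantee that a subgroup stable under all $\sigma_x$ is stable under all of $G$; this is why the converse can fail in the infinite case. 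Once the correct brace identity is isolated, both inclusions are short computations, so I would spend the bulk of the write-up pinning down that identity and the role of $K$.
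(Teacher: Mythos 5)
Your plan follows the paper's proof exactly: normality of $H$ in $(\mathcal{G}(X),\circ)$ plus the left-ideal property give admissibility, and the converse reverses the same computation, with finiteness used to pass from $\lambda_{\sigma_x}(H)\subseteq H$ (checked only on the generators $\sigma_x$ of $(\mathcal{G}(X),\circ)$) to $\lambda_g(H)\subseteq H$ for all $g$. The ``clean formula'' you defer is $\sigma_{h(x)}^{-1}=\lambda_h(\sigma_x^{-1})$ (the cycle base $\{\sigma_x^{-1}\}_{x\in X}$ is $\lambda$-invariant), from which two applications of the brace axiom give $\sigma_x\circ\sigma_{h(x)}^{-1}=\sigma_x\circ h\circ(h^{-1}+\sigma_x^{-1})=(\sigma_x\circ h\circ\sigma_x^{-1})\circ\lambda_{\sigma_x}(h^{-1})$, an identity whose two right-hand factors lie in $H$ precisely when $H$ is normal and $\lambda$-invariant; this single identity is the whole proof, and the core-free subgroup $K$ of \cref{cosmod} that you propose to track never enters.
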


\begin{proof}
Let $H$ be an ideal of the left brace $\mathcal{G}(X)$. Then $H$ is a normal subgroup of the multiplicative group. Moreover, 
\begin{eqnarray}
& &\sigma_x\circ \sigma^{-1}_{h(x)}=\sigma_x\circ\lambda_h(\sigma_x^{-1})=\sigma_x\circ h\circ(h^{-1}+\sigma_x^{-1}) \nonumber \\
&=& \sigma_x\circ h\circ \sigma_x^{-1} \circ\sigma_x \circ(h^{-1}+\sigma_x^{-1}) \nonumber \\
&=& \sigma_x\circ h\circ \sigma_x^{-1}\circ \lambda_{\sigma_x}(h) \nonumber
\end{eqnarray}
for all $x\in X, h\in H$. Since $\sigma_x\circ h\circ \sigma_x^{-1} \in H $ and $\lambda_{\sigma_x}(h)\in H $ for all $x\in X, h\in H$, we have that $H$ is an admissible subgroup of $\mathcal{G}(X)$. 

If $X$ is finite and $H$ is an admissible subgroup, as in the previous implication we obtain $\sigma_x\circ \sigma^{-1}_{h(x)}=  \sigma_x\circ h\circ \sigma_x^{-1}\circ \lambda_{\sigma_x}(h)$ for all $x\in X$, $h\in H$. Since $H$ is a normal subgroup, we have that $\lambda_{\sigma_x}(h) $ for all $x\in X,h\in H$, and by finiteness of $X$, since the $\sigma_x$ generates $\mathcal{G}(X)$, we have that $\lambda_g(h)\in H$ for all $g\in \mathcal{G}(X),h\in H$. Hence $H$ is an ideal of $\mathcal{G}(X)$.
\end{proof}




\smallskip



\begin{cor}[Section 3, \cite{rump2023primes}]\label{epiid} 
     Let $X$ be a cycle set and $I$ an ideal of $\mathcal{G}(X)$. Then $\sim_{\mathcal{O}_I}$ is a congruence of $X$. 
     %
\end{cor}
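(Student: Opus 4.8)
The plan is to reduce \cref{epiid} to \cref{idadm} together with the already-established fact that orbits of admissible subgroups yield congruences. The statement asserts that for a cycle set $X$ and an ideal $I$ of the associated left brace $\mathcal{G}(X)$, the orbit equivalence $\mathcal{O}_I$ is a congruence of $X$; note that, unlike in \cref{idadm}, here $X$ is a general (non-degenerate) cycle set, not assumed indecomposable, so the first step must deal with this.

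First I would pass to the indecomposable components. Since $\mathcal{G}(X)$ need not act transitively, decompose $X$ into the orbits of $\mathcal{G}(X)$; each such orbit $X_j$ is a subcycle set on which $\mathcal{G}(X)$ acts transitively, and one checks that $\mathcal{G}(X_j)$ is the image of $\mathcal{G}(X)$ under the restriction map, which is a left brace homomorphism. Hence the image $I_j$ of the ideal $I$ is again an ideal of $\mathcal{G}(X_j)$. Then I would invoke \cref{idadm}: $I_j$ is an admissible subgroup of $\mathcal{G}(X_j)$, so by \cite[Lemma 1.8]{semimedial} (quoted in \cref{admissiblesub}'s surrounding discussion) the relation $\mathcal{O}_{I_j}$ is a congruence of the left quasigroup $X_j$. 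Finally one assembles these component congruences: the relation $\mathcal{O}_I$ on $X$ restricts to $\mathcal{O}_{I_j}$ on each $X_j$ (because $I$ preserves each $\mathcal{G}(X)$-orbit, the $I$-orbits are exactly the $I_j$-orbits inside $X_j$), and distinct components stay in distinct classes, so $\mathcal{O}_I$ is a congruence of $X$. The cycle set identity \eqref{eqcycleset} plays no direct role beyond guaranteeing, via \cref{idadm}, that ideals are admissible; one should also remark that the quotient inherits the cycle set identity automatically since a quotient of a cycle set by a left-quasigroup congruence is again a cycle set.

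Alternatively, if the intended reading is that $X$ is already indecomposable (which is the setting of \cite{rump2023primes}), the proof is immediate and is essentially a one-line corollary: by \cref{idadm} the ideal $I$ is an admissible subgroup of $\mathcal{G}(X)$, and by the Galois-connection machinery recalled after \cref{admissiblesub} (specifically \cite[Lemma 1.8]{semimedial}) the orbit relation $\mathcal{O}_H$ of any admissible subgroup $H$ is a congruence of the left quasigroup; applying this with $H=I$ gives that $\mathcal{O}_I$ is a congruence, and it is automatically a cycle set congruence.

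The main obstacle is the bookkeeping in the non-indecomposable case: verifying that the restriction $\mathcal{G}(X)\to \mathrm{Sym}(X_j)$ is indeed a left brace homomorphism onto $\mathcal{G}(X_j)$ and that it carries the ideal $I$ to an ideal of $\mathcal{G}(X_j)$, together with checking that $\mathcal{O}_I$ glues correctly from the $\mathcal{O}_{I_j}$. All of this is routine but needs care about how the $\lambda$-action on $\mathcal{G}(X)$ interacts with restriction. If one simply adopts the convention (consistent with \cref{rap} and \cref{univ}, where indecomposability is standing) that "cycle set" here means the indecomposable situation inherited from \cite{rump2023primes}, then there is no obstacle at all and \cref{epiid} is a direct corollary of \cref{idadm}.
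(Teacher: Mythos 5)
Your proposal is correct and, in its second reading, is exactly the paper's proof: the paper disposes of \cref{epiid} in one line by combining \cref{idadm} (ideals are admissible) with \cite[Lemma 1.8]{semimedial} (orbits of admissible subgroups are congruences), without commenting on the fact that \cref{idadm} is stated for indecomposable cycle sets while \cref{epiid} is not. Your first, more elaborate route via $\mathcal{G}(X)$-orbits is not needed and contains one claim that would require care if you pursued it: the image of $\mathcal{G}(X)$ under restriction to an orbit $X_j$ is generated by $\sigma_x|_{X_j}$ for \emph{all} $x\in X$, which may properly contain $\mathcal{G}(X_j)=\langle \sigma_x|_{X_j} : x\in X_j\rangle$, so identifying the two (and transporting the ideal $I$) is not automatic.
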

\begin{proof}
The statement follows by \cref{idadm} and \cite[Lemma 1.8]{semimedial}.
\end{proof}

\subsection{Indecomposable cycle sets of prime-power size }

In this section we consider irretractable and indecomposable cycle sets having prime-power order. Under an additional hypothesis, these cycle sets can be obtained by irretractable and indecomposable cycle sets with a $p$-group permutation group. 

\smallskip

\begin{theor}\label{defpri}
        Let $X$ be an indecomposable and irretractable cycle set of size $p^n$ for some prime number $p$. Let $B_p$ the $p$-Sylow subgroup of $(\mathcal{G}(X),+)$. The following are equivalent:
        \begin{enumerate}
            \item     $B_p$ is a normal subgroup of $(\mathcal{G}(X),\circ) $.
            \item $X$ is a deformation $Y_\alpha$ of a cycle set $Y$ such that $\mathcal{G}(Y)\cong B_p$, $\alpha$ has order coprime with $p$ and $\alpha$ fixes an element $y\in Y$. 
        \end{enumerate}

\end{theor}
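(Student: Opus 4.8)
The plan is to prove the two implications separately, using the brace-theoretic machinery assembled in Sections 2 and 3 together with the cycle-base description of indecomposable cycle sets in \cref{cosmod} and \cref{idtr}.

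First I would prove $(2)\Rightarrow(1)$, which should be the easier direction. Suppose $X=Y_\alpha$ with $\mathcal{G}(Y)\cong B_p$ a $p$-group, $\alpha$ of order coprime to $p$, and $\alpha(y)=y$ for some $y\in Y$. The key observation is to identify $\mathcal{G}(X)$ explicitly: since the defining permutations of $X$ are $\sigma^X_x=\alpha\sigma^Y_x$, the group $\mathcal{G}(X)$ sits inside $\langle \mathcal{G}(Y),\alpha\rangle$, and because $\alpha$ normalizes $\mathcal{G}(Y)$ (as $\alpha\in\Aut(Y,\cdot)$ implies $\alpha\sigma^Y_x\alpha^{-1}=\sigma^Y_{\alpha(x)}$) one gets $\mathcal{G}(X)\le \mathcal{G}(Y)\rtimes\langle\alpha\rangle$. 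Then the additive $p$-Sylow subgroup of the brace $\mathcal{G}(X)$ is (a subgroup of) $\mathcal{G}(Y)$, hence of order a power of $p$ coprime-complemented by a cyclic part generated by the image of $\alpha$; I would argue that in fact $B_p=\mathcal{G}(Y)$ and that $B_p$ is $\circ$-normal because it is the unique $p$-Sylow of $\mathcal{G}(X)$ once one checks $|\mathcal{G}(X):B_p|$ is coprime to $p$ (using $\alpha(y)=y$ to control the order of $\alpha$ modulo $\mathcal{G}(Y)$ and the fact that $\mathcal{G}(X)/\dis(X)$ is cyclic). A unique Sylow subgroup is automatically normal in the multiplicative group as well, giving $(1)$.

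For $(1)\Rightarrow(2)$, I would use \cref{idtr}: realize $X$ as a transitive cycle base $Z$ inside $B:=\mathcal{G}(X)$ with $x\cdot y=\lambda_x^{-1}(y)$. By hypothesis $B_p\trianglelefteq (B,\circ)$, so $B_p$ is an ideal of the brace $B$ (it is a left ideal as a Sylow subgroup and normal by assumption). Apply \cref{preparteo}(2): $\lambda_g(b)=b$ for all $g\in B_p$ and $b\in B_{p_2}\circ\cdots\circ B_{p_r}$; since $|B|=|Z|\cdot|K|$ for the core-free stabilizer $K$ and everything in sight has $p$-power order on the $Z$-side, the complement $B_{p_2}\circ\cdots\circ B_{p_r}$ is a cyclic $p'$-group, and I can pick a generator whose image acts as the desired $\alpha$. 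Concretely: write a fixed base point $a\in Z$ as $a=a_p+a_{p'}$ in $(B,+)$ along the Sylow decomposition; the $p'$-part $a_{p'}$ generates a cyclic $p'$-subgroup, the $\lambda$-orbit of $a_p$ (which lies in the ideal $B_p$) is a transitive cycle base $Z'$ of $B_p$ defining a cycle set $Y$ with $\mathcal{G}(Y)\cong B_p$, and deforming $Y$ by $\alpha:=\lambda_{a_{p'}}$ reconstructs $X$. The fixed point is built in: $\alpha$ fixes $a_p$ because $a_{p'}$ and $a_p$ commute appropriately and $\lambda$ of a $p'$-element fixes $B_p$ pointwise by \cref{preparteo}(1) applied with $N$ the normal $p$-subgroup. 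The order of $\alpha$ divides $|a_{p'}|$ in $(B,\circ)$, which is coprime to $p$.

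The main obstacle I expect is the bookkeeping in $(1)\Rightarrow(2)$: namely showing that the deformation of $Y$ by $\alpha$ really returns $X$ on the nose (not merely an isomorphic cycle set), which requires carefully tracking how the identification $Z\leftrightarrow B/K$ interacts with the Sylow decomposition of the base point and with the action $\lambda$. One must verify that $\sigma^X_x=\alpha\,\sigma^Y_x$ after transporting along the right bijection, and that the stabilizer $K$ chosen for $X$ restricts to a valid core-free stabilizer inside $B_p$ for the cycle-base description of $Y$. A secondary subtlety is checking that $B_p$ as an \emph{additive} Sylow subgroup coincides with the relevant ideal and that its action generates $\mathcal{G}(Y)$ faithfully, i.e. that passing to $Y$ does not collapse the permutation group — here irretractability of $X$ and \cref{prel3}-type arguments, together with the hypothesis that $\alpha$ has a fixed point (ensuring the deformation stays indecomposable and irretractable), are what make it go through.
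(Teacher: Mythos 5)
Your overall architecture matches the paper's: for $(1)\Rightarrow(2)$ the paper also identifies $X$ with a transitive cycle base, decomposes the base point $x=x_1+\dots+x_r$ along the additive Sylow decomposition, shows the $p'$-part $t=x_2+\dots+x_r$ is fixed by every $\lambda_g$, and exhibits $X$ as the deformation by $\lambda_t^{-1}$ of the cycle set on the $B_p$-orbit of $x_1$; for $(2)\Rightarrow(1)$ the paper simply invokes \cite[Lemma 4.5]{dietzel2023indecomposable}, which is essentially the semidirect-product identification you sketch. However, there is one concretely wrong step in your $(1)\Rightarrow(2)$ argument: you justify the fixed point of $\alpha$ by claiming that ``$\lambda$ of a $p'$-element fixes $B_p$ pointwise by \cref{preparteo}(1) applied with $N$ the normal $p$-subgroup.'' That lemma says the opposite: with $N\leq B_{p_1}$ normal it gives $\lambda_n(b)=b$ for $n\in N$ and $b\in B_{p_2}\circ\cdots\circ B_{p_r}$, i.e.\ $\lambda$ of elements of the normal \emph{$p$-part} fixes the \emph{$p'$-part}. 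It says nothing about $\lambda$ of $p'$-elements acting on $B_p$; indeed, if the Hall $p'$-subgroup acted trivially on $B_p$ via $\lambda$, then by \cref{struttbr} the brace would split as a direct product and the deformation $\alpha$ would restrict to the identity on $Y$, trivializing statement $(2)$ — which is not the general situation.

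The correct source of the fixed point is a step your sketch leaves implicit: since $|\mathcal{G}(X)|=p^n\,|St(x)|$ and $\mathcal{G}(X)$ is solvable, the Hall $p'$-subgroup $B_{p_2}\circ\cdots\circ B_{p_r}$ is conjugate into $St(x)$, so after replacing $x$ by a conjugate base point one may assume $B_{p_2}\circ\cdots\circ B_{p_r}\subseteq St(x)$. Then $\lambda_a(x)=x$ for all $a$ in the Hall part, and uniqueness of the additive Sylow decomposition together with the left-ideal property of each $B_{p_i}$ gives $\lambda_a(x_i)=x_i$ for \emph{all} $i$, in particular $\lambda_t^{\pm1}(x_1)=x_1$. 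This conjugation is also what makes the decomposition $y=y_1+t$, $z=z_1+t$ of arbitrary points of $X$ (constant $p'$-part $t$) available, which is how one verifies that the deformation returns $X$ on the nose — the ``bookkeeping'' you flag as the main obstacle. With that step inserted and the misapplied lemma replaced, your argument closes along the same lines as the paper's; the cyclicity of the Hall $p'$-part that you invoke is not actually needed for this theorem (only the single element $t$ is used), and your $\alpha=\lambda_{a_{p'}}$ should be $\lambda_t^{-1}$, though this affects nothing since order and fixed points agree.
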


\begin{proof}
Assume that $B_{p_1}:=B_p$ is a normal subgroup of $(\mathcal{G}(X),\circ) $ and let $B_{p_2},...,B_{p_r}$ be the Sylow subgroup of $(\mathcal{G}(X),\circ)$ different from $B_p$. If $B_p=\mathcal{G}(X)$, we can take $\alpha=id_X$ to obtain the statement. Let $B_p<\mathcal{G}(X)$. Since $X$ is irretractable, we can identify $X$ with a transitive cycle base of $\mathcal{G}(X)$.  By orbit-stabilizer equation, if $St(x)$ is the stabilizer of $x$ respect to the action $\lambda$, we have $|\mathcal{G}(X)|=p^n\cdotp |St(x)|$. Since $\mathcal{G}(X)$ is solvable and finite, we have that the subgroup $B_{p_2}\circ ... \circ B_{p_r}$ is conjugate to a subgroup of $ St(x)$ so, up to replace $x$, we can suppose the inclusion $B_{p_2}\circ ... \circ B_{p_r}\subseteq St(x)$. Now, let $x_1,...,x_r\in \mathcal{G}(X)$, with $x_i\in B_{p_i}$ for every $i\in \{1,...,r\}$, such that $x=x_1+...+x_r$. Since this decomposition is unique and the subgroups $B_{p_i}$ are left ideals, it follows that $\lambda_{a}(x_i)=x_i$ for all $a\in B_{p_2}\circ ... \circ B_{p_r}$ and $i\in \{1,...,r\}$. Moreover, being $B_p$ normal in $\mathcal{G}(X)$, by \cref{preparteo} we have $\lambda_e(x_i)=x_i$ for all $e\in B_p$ and $i\in \{2,...,r\}$. From these facts, it follows that $\lambda_g(x_i)=x_i$ for all $g\in \mathcal{G}(X)$ and $i\in \{2,...,r\}$. Since $X$ is identified with a  $\lambda$-orbit, if $y,z$ are elements of $X$, there exist $y_1,z_1\in B_{p_1}$ such that $y=y_1+t$ and $z=z_1+t$, where $t=x_2+...+x_r$. Then
$$y\cdotp z=\lambda_{y_1+t}^{-1}(z_1+t)=\lambda^{-1}_{\lambda^{-1}_{y_1}(t)}(\lambda^{-1}_{y_1}(z_1))+\lambda^{-1}_{\lambda^{-1}_{y_1}(t)}(\lambda^{-1}_{y_1}(t))$$
and since $\lambda_g(t)=t$ for all $g\in \mathcal{G}(X)$, we obtain 
\begin{equation}
    y\cdotp z=\lambda_t^{-1}(\lambda^{-1}_{y_1}(z_1))+t.
\end{equation}
Therefore, we have that $X$ is the cycle set given by the lambda orbit $Y$ of $x_1$ respect to $B_p$ composed with $\lambda_{t}^{-1}$. By \cref{lemaut}, we obtain easily that the restriction of  $\lambda_{t}^{-1}$ to $Y$ is a cycle set automorphism, hence $X$ is a deformation of $Y$ by $\lambda_t^{-1}$, which has order coprime with $p$ and fixes $x_1$, and $\mathcal{G}(Y)\cong B_p$.\\
The converse follows by \cite[Lemma 4.5]{dietzel2023indecomposable}.
\end{proof}

\begin{rem}
    The previous result reduces the description of all indecomposable and irretractable cycle sets of prime-power size $p^n$ and with normal Sylow $p$-subgroup permutation group to the ones having a $p$-group permutation group. This is consistent with the first part of \cite[Section $4$]{dietzel2023indecomposable}. However, we highlight that \cref{defpri} is not useful to describe all the irretractable and indecomposable cycle set of size $p^n$. Indeed, Example $5$ of \cite{rump2023primes} is an irretractable and indecomposable cycle set of size $8$ and with permutation group isomorphic to the symmetric group $Sym(4)$. 
\end{rem}

\noindent Now, we can determine the structure of the permutation left braces associated to the cycle sets of \cref{defpri}. Recall that in a left brace $B$ the Sylow subgroups of $(B,+)$ coincide with the ones of $(B,\circ)$, and moreover if $B_1,...,B_t$ are Sylow subgroup of $(B,+)$, then $B_1+...+B_t=B_1\circ...\circ B_t$. Now, if $X$ is an indecomposable and irretractable cycle set of prime-power size $p^n$, we indicate by $B_{p_1}:=B_p$, $B_{p_2},...,B_{p_r}$ the Sylow subgroups of $\mathcal{G}(X)$.

\begin{prop}\label{struttbr}
    If $B_{p_1}$ is normal in $\mathcal{G}(X)$, then $\mathcal{G}(X)$ is isomorphic, as left brace, to the left braces semidirect product $B_{p_1}\rtimes_{\alpha} (B_{p_2}+...+B_{p_r})$, where $B_{p_2}+...+B_{p_r} $ is a trivial left brace with cyclic additive group and $\alpha$ correspond to the action by the map $\lambda$.
\end{prop}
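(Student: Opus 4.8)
The plan is to exhibit $\mathcal{G}(X)$ explicitly as an internal left-brace semidirect product of the normal Sylow subgroup $B_{p_1}$ by the complement $C:=B_{p_2}+\dots+B_{p_r}=B_{p_2}\circ\dots\circ B_{p_r}$. First I would recall the facts that are already available in the excerpt: each Sylow subgroup of the additive group is a left ideal, the additive subgroup $B_{p_1}+\dots+B_{p_r}$ coincides with the multiplicative subgroup $B_{p_1}\circ\dots\circ B_{p_r}$, and this is all of $\mathcal{G}(X)$ since together these orders multiply to $|\mathcal{G}(X)|$. So as a set $\mathcal{G}(X)=B_{p_1}+C$, and since the $p_i$ are pairwise coprime, $B_{p_1}\cap C=\{0\}$ both additively and multiplicatively. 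The first key step is therefore: $(\mathcal{G}(X),+)=B_{p_1}\oplus C$ as abelian groups, with $C$ of cyclic additive group (being a product of cyclic $p$-groups of distinct primes — here one uses that $\mathcal{G}(X)/\dis(X)$ is cyclic, hence each non-$p$ Sylow of $\mathcal{G}(X)$ is cyclic, as these map isomorphically into the cyclic quotient once the $p$-part is factored out; this is essentially the same observation underlying \cref{defpri}).

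The second step is to check that $C$, as a left subbrace, is a \emph{trivial} brace, i.e. $x\circ y = x+y$ for $x,y\in C$. This follows because $\lambda$ restricted to $C$ acts trivially on $C$: by \cref{preparteo}(1), applied with $N$ running over the normal Sylow $p_i$-subgroups of $C$ (each $B_{p_i}$ with $i\ge 2$ is normal in $\mathcal{G}(X)$ since $B_{p_1}$ is normal and the $p_i$-Sylow of a group with normal $p_1$-Sylow and cyclic $p'$-part is normal — again cyclicity of $\mathcal{G}(X)/\dis(X)$), one gets $\lambda_c|_C=\id$ for all $c\in C$, so $x\circ y = x+\lambda_x(y)=x+y$ on $C$. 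Hence $(C,+,\circ)$ is the trivial brace on a cyclic group.

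The third step assembles the semidirect product. Since $B_{p_1}\trianglelefteq(\mathcal{G}(X),\circ)$ is an ideal (it is a left ideal as a Sylow of the additive group and normal in the multiplicative group by hypothesis), the restriction of $\lambda$ gives a map $\lambda|_C : C\to\Aut(B_{p_1},+)$; one must verify it lands in brace-automorphisms of $B_{p_1}$, which is exactly \cref{lemaut} applied with $D:=C$ and the "left ideal" data $C$, $B_{p_1}$, using \cref{preparteo}(2) to get $\lambda_c(b)=b$ for $c\in C$, $b\in C$ — precisely the hypothesis $\lambda_x(y)=y$ for $x\in C+C=C$, $y\in C$ needed to invoke \cref{lemaut}. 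Then the standard left-brace semidirect product $B_{p_1}\rtimes_{\lambda|_C} C$ (with additive operation the direct sum and $\circ$ twisted by $\lambda|_C$) is defined, and the identity map on the underlying set $B_{p_1}\oplus C$ is checked to be a brace isomorphism: additively it is the direct sum decomposition from Step 1, and multiplicatively $(b_1+c_1)\circ(b_2+c_2) = b_1\circ\lambda_{b_1}^{-1}\!(\dots)$ unwinds, using $\lambda|_{B_{p_1}}$ acting trivially on $C$ (\cref{preparteo}) and the brace identity, to $(b_1\circ\lambda_{c_1}(b_2)) + (c_1\circ c_2)$, which is exactly the multiplication in $B_{p_1}\rtimes_{\lambda|_C} C$.

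The main obstacle I expect is bookkeeping around the $\lambda$-actions: one needs, simultaneously, that $\lambda_{B_{p_1}}$ fixes $C$ pointwise, that $\lambda_C$ fixes $C$ pointwise, and that $\lambda_C$ preserves the \emph{circle} operation (not just the additive one) on $B_{p_1}$ — the first two are direct applications of \cref{preparteo}, but the third genuinely needs \cref{lemaut} and a careful identification of which set plays the role of $C$ and which of $D$ there. A secondary point requiring care is the assertion that $B_{p_2}+\dots+B_{p_r}$ has \emph{cyclic} additive group and that each $B_{p_i}$ ($i\ge2$) is normal in $(\mathcal{G}(X),\circ)$; both reduce to the cyclicity of $\mathcal{G}(X)/\dis(X)$ combined with $\dis(X)\le B_{p_1}$ (which holds because $\dis(X)$ is transitive of order $p^n=|X|$, hence a $p$-group, hence contained in the unique, now normal, $p$-Sylow), but this chain of implications should be spelled out.
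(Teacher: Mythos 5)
Your overall strategy --- exhibiting $\mathcal{G}(X)$ as an internal brace semidirect product $B_{p_1}\rtimes C$ with $C:=B_{p_2}+\dots+B_{p_r}$ --- is a legitimate alternative to the paper's route, which simply quotes \cref{defpri} together with the deformation lemma of Dietzel--Rump. But as written your argument has a genuine gap at its central point: you never actually establish that $\lambda_g$ fixes $C$ pointwise for \emph{every} $g\in\mathcal{G}(X)$, and all three of your claims about $C$ (cyclic additive group, trivial brace structure, applicability of \cref{lemaut}) hinge on exactly that. \cref{preparteo}(1) applied to a normal subgroup $N$ contained in $B_{p_i}$ only yields $\lambda_n(b)=b$ for $b$ in the product of the \emph{other} Sylow subgroups, so even granting normality of each $B_{p_i}$ for $i\geq 2$ --- which you also do not justify: a normal Sylow $p_1$-subgroup with cyclic quotient does not force the remaining Sylows to be normal, cf.\ $S_3$ --- you would not obtain $\lambda_x(y)=y$ for $x,y$ in the \emph{same} $B_{p_i}$, hence not that $C$ is a trivial brace. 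Likewise, \cref{lemaut} with the roles $C:=B_{p_1}$ and $D:=C$ requires $\lambda_x(y)=y$ for all $x\in B_{p_1}+C=\mathcal{G}(X)$ and $y\in C$, not merely for $x\in C$, and \cref{preparteo}(2) only controls the action of $B_{p_1}$ on $C$, not of $C$ on itself. Finally, your justification of $\dis(X)\leq B_{p_1}$ (``transitive of order $p^n$, hence a $p$-group'') conflates transitivity with regularity; no regularity is assumed in this proposition.

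The missing ingredient is precisely the one that powers the paper's proof of \cref{defpri}: since $X$ is irretractable and indecomposable, it can be identified with a transitive cycle base generating $(\mathcal{G}(X),+)$, and after conjugating the Hall $p'$-subgroup into a stabilizer, every element of $X$ decomposes as $y_1+t$ with a \emph{fixed} $t=x_2+\dots+x_r\in C$ satisfying $\lambda_g(x_i)=x_i$ for all $g\in\mathcal{G}(X)$ and $i\geq 2$. From this one reads off simultaneously that $B_{p_i}=\langle x_i\rangle$ is additively cyclic for $i\geq 2$ (the additive generators of $\mathcal{G}(X)$ all project onto the single point $x_i$), that $\lambda_g|_C=\id$ for all $g$ (so $C$ is a trivial brace and the hypothesis of \cref{lemaut} holds), and that $\dis(X)=\mathcal{G}(X)^2\subseteq B_{p_1}$. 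If you insert this cycle-base step, the remainder of your internal semidirect-product computation does go through.
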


\begin{proof}
    It follows by \cref{defpri} and \cite[Lemma 4.5]{dietzel2023indecomposable}.
\end{proof}

As a corollary, we recover \cite[Theorem 2.13]{etingof1998set}, that classify indecomposable cycle sets of prime size. 

\begin{cor}(Theorem 2.13, \cite{etingof1998set})\label{classp}
    Let $X$ be an indecomposable cycle set of prime size $p$. Then $X$ is isomorphic to the trivial cycle set of size $p$ given by an arbitrary $p$-cycle.
\end{cor}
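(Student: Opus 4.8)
The plan is to derive \cref{classp} as a special case of the structural results just established, \cref{defpri} and \cref{struttbr}, together with the classification of indecomposable cycle sets of prime power size when the permutation group is a $p$-group. First I would fix an indecomposable cycle set $X$ of prime size $p$ and consider its permutation left brace $\mathcal{G}(X)$. By \cref{prel3}-type reasoning (or directly: a transitive abelian-by-cyclic action on $p$ points), one sees $\mathcal{G}(X)$ is a transitive subgroup of $\Sym(X)$ acting on a set of size $p$; since $\mathcal{G}(X)=\dis(X)\langle\sigma_x\rangle$ with $\mathcal{G}(X)/\dis(X)$ cyclic, and $\dis(X)$ transitive would force $p \mid |\dis(X)|$, the Sylow $p$-subgroup $B_p$ of $(\mathcal{G}(X),+)$ is nontrivial. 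The key point is that $B_p$ is normal in $(\mathcal{G}(X),\circ)$: indeed, the $p$-orbit $X$ itself spans $B_p$ additively, so $B_p=\mathcal{G}(X)^{(?)}$ is forced to be the unique subgroup on which $\mathcal{G}(X)$ acts transitively, and normality follows because the block system of $X$ coming from $\mathcal{O}_{B_p}$ is all of $X$ — equivalently $B_p$ is admissible, hence by \cref{idadm} an ideal, hence normal in $(\mathcal{G}(X),\circ)$.

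Granting that $B_p\trianglelefteq(\mathcal{G}(X),\circ)$, I would invoke \cref{defpri}: $X$ is a deformation $Y_\alpha$ of a cycle set $Y$ with $\mathcal{G}(Y)\cong B_p$, where $\alpha$ has order coprime to $p$ and fixes an element of $Y$. Since $|X|=p$ and the deformation does not change the underlying set, $|Y|=p$, so $\mathcal{G}(Y)\cong B_p$ is a transitive $p$-subgroup of $\Sym(Y)$ with $|Y|=p$; therefore $B_p$ is cyclic of order $p$ and $Y$ is an indecomposable cycle set whose permutation group is cyclic of order $p$. One then identifies $Y$ with the trivial cycle set on $\Z/p\Z$ provided by a $p$-cycle $\gamma$: by \cref{idtr}, $Y$ embeds as a transitive cycle base $Z$ of $\mathcal{G}(Y)\cong\Z/p\Z$ with $y\cdotp y'=\lambda_y^{-1}(y')$, and since $\lambda$ is a homomorphism from $(\mathcal{G}(Y),\circ)\cong\Z/p\Z$ into $\Aut(\Z/p\Z,+)$ which has order $p-1$ coprime to $p$, the $\lambda$-action is trivial, so $y\cdotp y'=y'$, i.e. $Y$ is (isomorphic to) a trivial cycle set $(\Z/p\Z,\cdotp)$ with $y\cdotp y'=y'$ — the trivial cycle set provided by the identity, and more generally any trivial cycle set of size $p$ provided by a $p$-cycle is isomorphic to this one after relabelling.

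Finally I would reconcile the statement: $X=Y_\alpha$ means $y\cdotp_\alpha y'=\alpha(y\cdotp y')=\alpha(y')$, so $X$ is exactly the trivial cycle set provided by the permutation $\alpha$; and $\alpha$ being a fixed-point-containing permutation of a $p$-element set of order coprime to $p$ together with the indecomposability of $X$ (hence transitivity of $\mathcal{G}(X)=\langle\alpha\rangle$ up to the $\sigma$-orbit structure) forces $\alpha$ to be a $p$-cycle — at which point $X$ is the deformation of the trivial cycle set of size $p$ by an arbitrary $p$-cycle, as claimed. The converse direction, that every such deformation is an indecomposable cycle set of size $p$, is immediate since deformations of cycle sets are cycle sets and a $p$-cycle acts transitively.

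The main obstacle I anticipate is the very first step: cleanly justifying that $B_p$ is normal in $(\mathcal{G}(X),\circ)$ for a $p$-element cycle set without circularity. The safe route is the observation that any transitive permutation group $G$ of prime degree $p$ has a normal Sylow $p$-subgroup precisely when $G$ is solvable (Galois), and $\mathcal{G}(X)$ of a finite cycle set is solvable since $(\mathcal{G}(X),+)$ is an abelian group of the same order (a standard fact about permutation left braces, usable here because $\mathcal{G}(X)$ carries a left brace structure with abelian additive group). With solvability in hand, a transitive solvable group of prime degree is a subgroup of the affine group $\aff{1}$ over $\F_p$, whose Sylow $p$-subgroup (the translations) is normal; this is $B_p$, and then \cref{defpri} and \cref{struttbr} finish the argument mechanically.
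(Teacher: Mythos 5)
Your plan runs through \cref{defpri}, but that theorem is stated only for \emph{indecomposable and irretractable} cycle sets, and the target of the corollary --- the trivial cycle set $x\cdotp y=\alpha(y)$ with $\alpha$ a $p$-cycle --- is as retractable as a cycle set can be (all $\sigma_x$ coincide). So the irretractable branch cannot end in the desired description; it must end in a contradiction. This is exactly how the paper argues: assuming $X$ irretractable, solvability of $\mathcal{G}(X)$ plus \cref{defpri} produces a deformation source $\bar X$ that is again irretractable (by \cref{invariant}) with $\mathcal{G}(\bar X)\cong\Z/p\Z$, which contradicts \cite[Proposition 1]{capiru2020}; hence $X$ is retractable, and the retractable indecomposable case is settled by \cite[Lemma 1]{cacsp2018}. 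Your proposal never treats the retractable case, which is where the actual conclusion lives.

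Your own computation already exposes the inconsistency. You apply \cref{idtr} to $Y$ (again, this requires $Y$ irretractable) and deduce that the $\lambda$-action of $\mathcal{G}(Y)\cong\Z/p\Z$ is trivial, hence $y\cdotp y'=y'$. But then every $\sigma_y$ is the identity and $\mathcal{G}(Y)=1$, contradicting $\mathcal{G}(Y)\cong\Z/p\Z$; equivalently, with trivial $\lambda$-action a transitive cycle base is a singleton and cannot be identified with the $p$-element set $Y$. That contradiction is the content of \cite[Proposition 1]{capiru2020} and should be the \emph{endpoint} of the irretractable branch, not a step toward the answer. Finally, the reconciliation at the end is internally impossible: the automorphism $\alpha$ supplied by \cref{defpri} has order coprime to $p$ and fixes a point of a $p$-element set, so it can never be a $p$-cycle. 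The $p$-cycle in the statement arises from the retractable analysis (via \cite[Lemma 1]{cacsp2018}), not from the deformation datum of \cref{defpri}.
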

\begin{proof}
    Suppose that $X$ is irretractable. Since by \cite[Theorem $2.15$]{etingof1998set} $\mathcal{G}(X)$ is solvable, by \cite[Exercise $7.2.12$]{robinson2012course} we have that it is isomorphic to a subgroup of the form $\mathbb{Z}/p\mathbb{Z}\rtimes N$, with $N$ a cyclic subgroup having order dividing $p-1$. Therefore, by \cref{defpri} $X$ is a deformation of an indecomposable cycle set $\bar{X}$ with permutation group isomorphic to $\mathbb{Z}/p\mathbb{Z}$. Since $X$ is irretractable, it follows that  $\bar{X}$ is irretractable, but this contradicts \cite[Proposition 1]{capiru2020}. Hence, we have that $X$ is retractable. Therefore by \cite[Lemma 1]{cacsp2018} $X$ is a cycle set given by $x\cdotp y:=\alpha(y)$ for all $x,y\in X$, where $\alpha$ is a $p$-cycle. By a standard calculation, one can show that the isomorphism class of this cycle set does not depend on the choice of the $p$-cycle.
\end{proof}

\subsection{Blocks of imprimitivity}

We turn our attention on the \emph{primitive} cycle sets, i.e. cycle sets having primitive permutation group. In this subsection, we recover the classification of these cycle sets given in \cite{cedo2020primitive}, and we provide a forward improvement that give us a method to find a non-trivial complete blocks system.

\smallskip

At first, recall that a permutation group $G$ of a finite set $X$ is said to be \emph{quasiprimitive} if every normal subgroup acts transitively on $X$ and it is called \emph{innately transitive} if there exist a minimal normal subgroup that acts transitively on $X$. Clearly, the implications ``primitive $\Rightarrow$ quasiprimitive" and ``quasiprimitive $\Rightarrow$ innately transitive" hold.
For solvable groups, these notions are equivalent (for more details, see \cite[Section 1]{bereczky2008groups} and \cite{praeger1993nan}). 

\begin{lemma}\label{inntr}
    Let $G$ be {a solvable permutation group of a finite set $X$}. Then, the following statements are equivalent:
    \begin{itemize}
        \item[1)] $G$ acts primitively on $X$;
        \item[2)] $G$ acts quasiprimitively on $X$;
        \item[3)] the action of $G$ on $X$ is innately transitive.
    \end{itemize}
\end{lemma}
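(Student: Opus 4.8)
The plan is to prove the two nontrivial implications $(3)\Rightarrow(1)$ and to observe that $(1)\Rightarrow(2)\Rightarrow(3)$ is immediate, so that all three are equivalent. The implications ``primitive $\Rightarrow$ quasiprimitive'' and ``quasiprimitive $\Rightarrow$ innately transitive'' were already recorded in the paragraph preceding the statement (a block of imprimitivity of a transitive group is permuted by the group, and a nontrivial normal subgroup acting nontransitively would have its orbits form a nontrivial block system; and any quasiprimitive group is certainly innately transitive since \emph{every} minimal normal subgroup is transitive). So the whole content is: for a \emph{solvable} finite permutation group, innate transitivity forces primitivity.

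First I would invoke the structure of minimal normal subgroups of a finite solvable group: if $G$ is solvable and $N\trianglelefteq G$ is a minimal normal subgroup, then $N$ is an elementary abelian $p$-group for some prime $p$. Assume $N$ acts transitively on $X$; since $N$ is abelian, a transitive abelian group is regular, so $N$ acts regularly on $X$ and in particular $|X|=|N|=p^m$ and the point stabilizer $N_x=1$ for every $x\in X$. Now suppose toward a contradiction that the action of $G$ on $X$ is \emph{not} primitive, so there is a nontrivial block system; equivalently, for a point $x$ the stabilizer $G_x$ is not a maximal subgroup of $G$, and there is an intermediate subgroup $G_x<H<G$. The block containing $x$ is then the orbit $H(x)$, which is a union of $N$-cosets translated, and has size $[H:H\cap N]\cdot$(something) — more cleanly, since $N$ is regular, $H(x)$ corresponds bijectively to $H\cap N$ (because the $N$-translates already cover $X$ and $H$ contains $G_x$). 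Thus the block $B:=H(x)$ satisfies $|B|=|H\cap N|$, and $N\cap H$ is a nontrivial proper subgroup of $N$ that is normalized by... here is where the solvability re-enters.

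The key step is to derive a contradiction from the existence of such an intermediate $H$. Because $N$ is regular and abelian, $G = N \rtimes G_x$ (as $N$ is transitive with trivial $G_x\cap N$, and $N$ normal), so $H = (H\cap N)\rtimes G_x$ and $H\cap N$ is a $G_x$-invariant subgroup of $N$ (it is normalized by $G_x$ and, being a subgroup of abelian $N$, also by $N$, hence by $G=N G_x$). But then $H\cap N$ is a normal subgroup of $G$ contained in $N$, and it is nontrivial and proper in $N$ (nontrivial since $G_x < H$ forces $H\cap N\neq 1$, proper since $H<G$ forces $H\cap N\neq N$), contradicting the \emph{minimality} of $N$. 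Hence no intermediate subgroup exists, $G_x$ is maximal, and $G$ acts primitively. This closes $(3)\Rightarrow(1)$.

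I expect the main obstacle to be stating cleanly the regularity/semidirect-product bookkeeping: one must be careful that ``$N$ transitive abelian $\Rightarrow$ $N$ regular'' is being used, that consequently $G=N\rtimes G_x$, and that intermediate subgroups of $G$ containing $G_x$ are exactly the $(H\cap N)\rtimes G_x$ for $G_x$-invariant subgroups $H\cap N\le N$ — the correspondence between blocks and subgroups between $G_x$ and $G$ is standard but should be cited or spelled out in one line. Everything else (minimal normal subgroups of solvable groups are elementary abelian; the easy chain of implications) is routine. One could alternatively phrase the argument entirely in terms of block systems rather than subgroups, but the subgroup-lattice formulation makes the appeal to minimality of $N$ most transparent.
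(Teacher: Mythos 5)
Your proof is correct. Note that the paper itself does not prove this lemma at all: it simply defers to the literature (Bereczky--Mar\'oti and Praeger are cited just before the statement), so your self-contained argument is a genuine addition rather than a variant of the paper's proof. The argument you give is the standard one and all the key steps check out: a minimal normal subgroup $N$ of a finite solvable group is elementary abelian, a transitive abelian permutation group is regular, hence $G=N\rtimes G_x$; any intermediate subgroup $G_x<H<G$ satisfies $H=(H\cap N)G_x$ with $H\cap N$ normalized by $G_x$ (as $H$ and $N$ both are) and by $N$ (abelian), so $H\cap N\trianglelefteq G$ is a nontrivial proper subgroup of $N$, contradicting minimality. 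Together with the standard correspondence between blocks through $x$ and subgroups between $G_x$ and $G$, this gives $(3)\Rightarrow(1)$, and the chain $(1)\Rightarrow(2)\Rightarrow(3)$ is immediate as you say. The only cosmetic remark is that the digression about $|B|=[H:H\cap N]\cdot(\text{something})$ is unnecessary once you commit to the subgroup-lattice formulation; the final version of your argument does not need it.
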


\begin{prop}\label{quasipr}
    Let $X$ be an indecomposable cycle set with innately transitive permutation group. Then, $X$ is a trivial indecomposable cycle set of size $p$, for some prime number $p$.
\end{prop}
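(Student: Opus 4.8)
The plan is to exploit the fact, established earlier in the excerpt, that a cycle set $X$ gives rise to a left brace structure on $\mathcal{G}(X)$ and that (by \cref{idadm}, using that $X$ is finite) the ideals of this left brace correspond exactly to the admissible subgroups of $\mathcal{G}(X)$, whose orbits produce congruences of $X$ (\cref{epiid}). So first I would fix a minimal normal subgroup $N$ of $\mathcal{G}(X)$ that acts transitively on $X$ — such an $N$ exists by the hypothesis that $\mathcal{G}(X)$ is innately transitive. Since $\mathcal{G}(X)$ is solvable (it is built from the maps $\sigma_x$ of a finite cycle set, and by general brace theory the multiplicative group of a finite brace is solvable), $N$ is an elementary abelian $p$-group for some prime $p$, and by \cref{inntr} the action of $\mathcal{G}(X)$ is in fact primitive. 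Because $N$ is abelian and transitive on $X$, it acts \emph{regularly}, so $|X| = |N| = p^n$ for some $n$; in particular $N$ can be identified with $X$ as a set once we pick a base point.

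Next I would use primitivity to pin down $n$. A transitive abelian regular normal subgroup of a primitive group forces the point stabilizer $\mathcal{G}(X)_x$ to act irreducibly on $N$ (viewing $N$ as an $\mathbb{F}_p$-vector space), by the standard correspondence between blocks and $\mathcal{G}(X)_x$-invariant subgroups of the regular normal subgroup $N$. At this stage I would bring in the cycle set equation. Realize $X$ via \cref{rap} (or \cref{cosmod}) on left cosets, and translate the cycle identity $(x\cdot y)\cdot(x\cdot z) = (y\cdot x)\cdot(y\cdot z)$ into a constraint on the $\lambda$-action of the associated brace restricted to $N$. The key point I expect to need: because $N$ is an abelian ideal on which $\mathcal{G}(X)$ acts by conjugation/$\lambda$, the maps $\sigma_x$ restricted to $N$ differ from each other by elements of $N$, and the cycle set axiom combined with abelianness of $N$ will collapse the operation. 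Concretely I would show that for all $x,y\in X$, $\sigma_x = \sigma_y$, i.e. $X$ is retractable, unless $n=1$.

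The main obstacle is exactly this retractability step: showing that an indecomposable cycle set whose permutation group is primitive with abelian socle must be \emph{trivial} of prime size. I would argue by contradiction: if $X$ is not trivial, then $\sim_\sigma \neq 1_X$, so the partition into $\sim_\sigma$-classes is a nontrivial $\mathcal{G}(X)$-invariant partition (it is $\mathcal{G}(X)$-invariant because $\sigma_{g(x)} = g\sigma_x g^{-1}$), contradicting primitivity — \emph{unless} $\sim_\sigma = 0_X$, i.e. $X$ is irretractable. So the real work is ruling out the irretractable primitive case with $n \geq 2$. Here I would invoke \cref{idtr} to realize $X$ as a transitive cycle base $Z$ inside $\mathcal{G}(X)$ with $x\cdot y = \lambda_x^{-1}(y)$, note that $Z$ generates the additive group, and show that the $\lambda$-stabilizer of a point, being the full point stabilizer $\mathcal{G}(X)_x$, would then have to act irreducibly on $N \cong \mathbb{F}_p^n$ while simultaneously the cycle base $Z$ generating $(N,+)$ and lying in a single $\lambda$-orbit forces, via the cycle identity, a one-dimensional invariant subspace — so $n=1$. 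Once $n=1$, $\mathcal{G}(X)$ is a subgroup of $\mathbb{Z}/p\mathbb{Z} \rtimes (\mathbb{Z}/p\mathbb{Z})^\times$, and by the argument already used in the proof of \cref{classp} (via \cite{capiru2020} and \cite{cacsp2018}) the cycle set is the trivial one of size $p$ given by a $p$-cycle, which indeed has primitive (cyclic, transitive of prime degree) permutation group. This completes the proof.
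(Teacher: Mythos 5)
Your first half tracks the paper closely: solvability of $\mathcal{G}(X)$ plus \cref{inntr} upgrades innate transitivity to primitivity, the minimal normal subgroup $N$ is elementary abelian and regular so $|X|=p^n$, and primitivity applied to the $\mathcal{G}(X)$-invariant partition coming from $\sim_\sigma$ reduces everything to the irretractable case (one caveat: your justification $\sigma_{g(x)}=g\sigma_x g^{-1}$ is the rack identity and does not hold for cycle sets; the invariance of $\sim_\sigma$ here rests on the retraction relation being a congruence of non-degenerate cycle sets, not on that formula).

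The genuine gap is at the decisive step, ruling out the irretractable case with $n\geq 2$. You assert that the transitive cycle base $Z$ ``forces, via the cycle identity, a one-dimensional invariant subspace'' of $N$, contradicting irreducibility of the point stabilizer — but you give no argument for this, and it is not clear it can be made to work: $Z$ generates the additive group of the whole brace $\mathcal{G}(X)$, not of $N$, and nothing in the cycle identity obviously produces a proper $\mathcal{G}(X)_x$-invariant subspace of $N$. The paper's actual mechanism is different: working with the Sylow decomposition of the brace $\mathcal{G}(X)$ and \cref{preparteo}, one shows that $B_{p_2}+\cdots+B_{p_r}$ is a trivial cyclic brace on which $B_p$ acts trivially via $\lambda$, hence $B_p$ is normal in $(\mathcal{G}(X),\circ)$; minimality and self-centrality of $N$ then give $N=Z(B_p)=B_p$, so by \cref{defpri} the irretractable $X$ is a deformation of an irretractable cycle set with \emph{abelian} permutation group isomorphic to $N$, which is impossible by \cite[Proposition 1]{capiru2020}. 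In short, the contradiction comes from the nonexistence of irretractable indecomposable cycle sets with abelian permutation group, not from an invariant-subspace count; without supplying that (or an equivalent) argument, your proof does not close.
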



\begin{proof}
    By \cite[Theorem 2.15]{etingof1998set}, $\mathcal{G}(X)$ is a finite solvable group, hence by \cref{inntr} we have that $\mathcal{G}(X)$ acts primitively on $X$. Again by solvability of $\mathcal{G}(X)$ and by \cite[Theorem 7.2.6]{robinson2012course} we have that $|X|=p^n$ for some natural number $n$ and $\mathcal{G}(X)$ is isomorphic to  the semidirect product $N\rtimes G$ where $N$ is an elementary abelian $p$-group that acts transitively on $X$ and $G$ is the stabilizer of an element $x\in X$. Moreover, $N$ is a minimal normal subgroup of $\mathcal{G}(X)$ and is self-centralizing. By primitivity of $\mathcal{G}(X)$, either $X$ is irretractable or $n$ is equal to $1$. Suppose that $X$ is irretractable. Since $\mathcal{G}(X)$ is solvable and finite, we can suppose that $G$ contains $B_{p_2}+...+B_{p_r}$ and by \cref{idtr}, we have that $X$ can be identified with a transitive cycle base, therefore $\lambda_a(x)=x$ for all $a\in G$ and, if $x_1\in B_{p_1}$,...,$x_r\in B_{p_r}$ are such that $x=x_1+...+x_{r}$, it follows that $\lambda_a(x_i)=x_i$ for all $i\in \{1,...,r\}$. Moreover, since $N$ is a normal $p$-group, by \cref{preparteo} it follows that $\lambda_n(x_i)=x_i$ for all $i\in\{2,...r\}$. By these facts, we obtain that $B_{p_2}+...+B_{p_r}$ is a trivial left brace with cyclic additive group, and that $B_p$ acts trivially on $B_{p_2}+...+B_{p_r}$ by the map $\lambda$, therefore by \cref{preparteo} $B_p$ is normal in $\mathcal{G}(X)$. By minimality and self-centrality of $N$, it follows that $N=Z(B_p)=B_p$. By \cref{defpri}, $X$ is a deformation of an irretractable cycle set $\bar{X}$ with abelian permutation group isomorphic to $N$, but this contradicts \cite[Proposition 1]{capiru2020}. Therefore, $X$ has prime size $p$. The rest of the statement follows by \cref{classp}.
\end{proof}

   As we said before, \cref{quasipr} is essentially the main result of \cite{cedo2020primitive} with a forward improvement. The main difference is that now we have a concrete method to find at least a complete blocks system by the relation given in \cref{relut}.

\begin{cor}
    Let $X$ be a cycle set. Then, $X$ is primitive if and only if it is isomorphic to a cycle set of prime size $p$ given by $(\mathbb{Z}/p\mathbb{Z},\cdotp)$ and $x\cdotp y:=y+1$ for all $x,y\in \mathbb{Z}/p\mathbb{Z}$
\end{cor}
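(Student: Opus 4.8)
The plan is to deduce this corollary directly from \cref{quasipr} together with \cref{classp}. First I would observe that the stated cycle set $(\mathbb{Z}/p\mathbb{Z},\cdotp)$ with $x\cdotp y:=y+1$ is exactly the trivial cycle set of size $p$ provided by the $p$-cycle $\gamma\colon y\mapsto y+1$; its associated permutation group is generated by the single permutation $\sigma_x=\gamma$, hence cyclic of order $p$, which acts transitively on $p$ points and therefore primitively (a transitive group of prime degree is primitive). So the ``if'' direction is immediate, and it also shows that such a cycle set is indeed primitive.

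For the ``only if'' direction, suppose $X$ is a primitive cycle set. Primitivity of $\mathcal{G}(X)$ forces $X$ to be indecomposable (a primitive group is transitive), and primitivity in particular implies innate transitivity, so \cref{quasipr} applies and tells us that $X$ is a trivial indecomposable cycle set of size $p$ for some prime $p$. By \cref{classp}, $X$ is then isomorphic to the deformation of the trivial cycle set of size $p$ by some $p$-cycle; choosing coordinates so that this $p$-cycle is $y\mapsto y+1$ on $\mathbb{Z}/p\mathbb{Z}$ identifies $X$ with $(\mathbb{Z}/p\mathbb{Z},\cdotp)$, $x\cdotp y:=y+1$. The last step uses the fact, already recorded in the proof of \cref{classp}, that the isomorphism class of a trivial cycle set provided by a $p$-cycle does not depend on the chosen $p$-cycle, since any two $p$-cycles are conjugate in $\Sym_X$.

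The main (and really the only) subtlety is bookkeeping: one must make sure the hypotheses of \cref{quasipr} are genuinely available, i.e. that ``$X$ primitive'' in the sense of the paper (the action of $\mathcal{G}(X)$ is primitive) yields an innately transitive action. Since primitive $\Rightarrow$ quasiprimitive $\Rightarrow$ innately transitive was noted just before \cref{inntr}, this is free. I do not expect any genuine obstacle here; the corollary is essentially a restatement of \cref{quasipr} made concrete via \cref{classp}.

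\begin{proof}
    The cycle set $(\mathbb{Z}/p\mathbb{Z},\cdotp)$ with $x\cdotp y:=y+1$ is the trivial cycle set provided by the $p$-cycle $\gamma\colon y\mapsto y+1$, and $\mathcal{G}(X)=\langle\gamma\rangle$ is cyclic of order $p$ acting transitively on $p$ points, hence primitively. Conversely, if $X$ is primitive then $\mathcal{G}(X)$ is transitive, so $X$ is indecomposable, and the action of $\mathcal{G}(X)$ is innately transitive. By \cref{quasipr}, $X$ is a trivial indecomposable cycle set of prime size $p$, so by \cref{classp} it is isomorphic to the deformation of the trivial cycle set of size $p$ by a $p$-cycle; as all $p$-cycles are conjugate in $\Sym_X$, this deformation is isomorphic to $(\mathbb{Z}/p\mathbb{Z},\cdotp)$ with $x\cdotp y:=y+1$.
\end{proof}
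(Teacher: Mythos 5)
Your proof is correct and follows exactly the route the paper intends: the corollary is stated without proof as an immediate consequence of \cref{quasipr} (primitive $\Rightarrow$ innately transitive $\Rightarrow$ trivial indecomposable of prime size) together with the uniqueness-up-to-isomorphism recorded in \cref{classp}. Your additional check that the standard cycle set $x\cdotp y:=y+1$ on $\mathbb{Z}/p\mathbb{Z}$ is indeed primitive (transitive of prime degree) is exactly the bookkeeping the paper leaves implicit.
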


\section{Cycle sets with regular displacement group}
\subsection{Right linear cycle sets}

Let us introduce a family of left quasigroups constructed over groups.

\begin{defin}\label{defrightlin}
    A left quasigroup $X$ is said to be \emph{right linear} over a group $(G,\circ)$ if $X=G$ and there exist a map $\phi:G\longrightarrow G$ and $\psi\in Aut(G,\circ)$ such that $x\cdotp y:=\phi(x)\circ \psi(y)$ for all $x,y\in X$. If in addition $\phi\in Sym_G$, $X$ is latin.\\
    If $(G,\circ)$ is abelian and $x\cdot y=\phi(x)+\psi(y)+c$ for $\phi\in \End(G,+)$ and $c\in G$ we say that $(G,\cdotp)$ is {\it affine}. We denote such a left quasigroup by $\aff{G,\phi,\psi,c}$.

\end{defin}

By a standard calculation, one can show that a left quasigroup $X$ right linear over a group $(G,+)$ is a cycle set if and only if 
\begin{equation}\label{rightlinearcond}
    \phi(\phi(y)+\psi(x))+\psi(\phi(y))=\phi(\phi(x)+\psi(y))+\psi(\phi(x))
\end{equation}
holds for all $x,y\in G$. In a similar way, it follows that a left quasigroup $X$ affine over a group $(G,+)$, where $x\cdotp y=\phi(x)+\psi(y)+c$ for a constant $c\in G$, is a cycle set if and only if $\phi\psi(x)- \psi\phi(x)=\phi^2(x)$ for all $x\in G$.
The automorphism group of affine latin left quasigroups are completely described.

\begin{lemma}(\cite[Proposition $3.9$]{drapal2009group})\label{autquasi}
    Let $X=\aff{G,\phi,\psi,c}$ be a latin left quasigroup affine over an abelian group $(G,+)$. Then 
$$Aut(G,\cdotp)=\{ t_g\circ \eta\mid \eta \in C_{Aut(G,+)}(\phi,\psi),\quad \eta(c)-c=\phi(g)+\psi(g)-g\}$$
\end{lemma}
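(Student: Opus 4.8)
The plan is to verify directly that the set on the right-hand side consists exactly of the automorphisms of $(G,\cdotp)$, where $\cdotp$ is the operation $x\cdotp y = \phi(x)+\psi(y)+c$. First I would recall that since $X=\aff{G,\phi,\psi,c}$ is latin, $\phi$ is an automorphism of $(G,+)$ (it is a bijective endomorphism), and the cycle set condition forces $\phi\psi-\psi\phi=\phi^2$, equivalently $\psi\phi = \phi\psi - \phi^2$; these relations will be used freely. I will handle the two inclusions separately.

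For the inclusion $\supseteq$, take $\eta\in C_{\Aut(G,+)}(\phi,\psi)$ with $\eta(c)-c=\phi(g)+\psi(g)-g$ for some $g\in G$, and set $f:=t_g\circ\eta$, i.e.\ $f(x)=\eta(x)+g$. I would compute $f(x\cdotp y)$ and $f(x)\cdotp f(y)$ and check they agree:
\begin{align*}
f(x\cdotp y) &= \eta(\phi(x)+\psi(y)+c)+g = \phi(\eta(x))+\psi(\eta(y))+\eta(c)+g,\\
f(x)\cdotp f(y) &= \phi(\eta(x)+g)+\psi(\eta(y)+g)+c = \phi(\eta(x))+\psi(\eta(y))+\phi(g)+\psi(g)+c,
\end{align*}
using that $\eta$ commutes with $\phi$ and $\psi$ and that $(G,+)$ is abelian. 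These two expressions are equal precisely because $\eta(c)+g = \phi(g)+\psi(g)+c$, which is the hypothesis on $\eta$. Since $\eta$ is bijective and $t_g$ is bijective, $f$ is an automorphism of $(G,\cdotp)$.

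For the inclusion $\subseteq$, let $f\in\Aut(G,\cdotp)$. Writing $g:=f(0)$ and $\eta:=t_{-g}\circ f$, so that $\eta(0)=0$ and $f=t_g\circ\eta$, I must show $\eta\in\Aut(G,+)$, that $\eta$ commutes with $\phi$ and $\psi$, and that $\eta(c)-c=\phi(g)+\psi(g)-g$. Expanding $f(x\cdotp y)=f(x)\cdotp f(y)$ with $f=t_g\circ\eta$ and cancelling, as in the computation above, yields the identity
\begin{equation*}
\phi(\eta(x))+\psi(\eta(y))+\eta(c)+g = \eta(\phi(x))+\eta(\psi(y))+\eta(c)+g \quad\text{after substitution,}
\end{equation*}
more precisely one gets $\eta(\phi(x)+\psi(y)+c) = \phi(\eta(x))+\psi(\eta(y))+\phi(g)+\psi(g)+c$ for all $x,y$. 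Setting $x=y=0$ gives $\eta(c)=\phi(g)+\psi(g)+c-g$, the required relation on $g$; substituting this back reduces the identity to $\eta(\phi(x)+\psi(y)+c)=\phi(\eta(x))+\psi(\eta(y))+\eta(c)$. Now I would exploit the surjectivity of the maps $a\mapsto\phi(a)$ and $b\mapsto\psi(b)$ (both automorphisms of $(G,+)$: $\phi$ by latinity, $\psi$ by hypothesis) to rewrite this as: for all $u,v\in G$, $\eta(u+v+c)=\phi\eta\phi^{-1}(u)+\psi\eta\psi^{-1}(v)+\eta(c)$. Putting $v=0$ and $u=0$ in turn isolates $\phi\eta\phi^{-1}$ and $\psi\eta\psi^{-1}$ as translates of $\eta$, and then comparing shows $\eta$ is additive and that it commutes with $\phi$ and with $\psi$; this is essentially the standard argument that an affine-looking bijection with these compatibilities is linear.

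The main obstacle is the last step: disentangling the single functional equation $\eta(\phi(x)+\psi(y)+c)=\phi(\eta(x))+\psi(\eta(y))+\eta(c)$ into the three separate conclusions (additivity of $\eta$, $\eta\phi=\phi\eta$, $\eta\psi=\psi\eta$). The reindexing via $\phi^{-1},\psi^{-1}$ and specialization to $x=0$ or $y=0$ is the crux; one must be careful that setting one variable to $0$ does not lose information, and here it does not because $\phi,\psi$ are surjective so the ranges of $\phi(x)$ and $\psi(y)$ independently cover $G$. Everything else is routine computation in the abelian group $(G,+)$ using the brace/cycle-set relation $\psi\phi=\phi\psi-\phi^2$ only insofar as it guarantees $(G,\cdotp)$ is genuinely a cycle set; the automorphism description itself does not need it beyond well-definedness. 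This is exactly the content of \cite[Proposition $3.9$]{drapal2009group}, so I would simply cite that reference for the detailed verification and indicate the computation above as the proof sketch.
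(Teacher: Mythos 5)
Your proposal is correct. Note that the paper does not prove this lemma at all: it is imported verbatim as \cite[Proposition 3.9]{drapal2009group}, so there is no internal argument to compare against. What you supply is a complete direct verification of the cited fact, and it goes through. The $\supseteq$ computation is exactly right, and the $\subseteq$ direction works as you sketch it: after normalising $f=t_g\circ\eta$ with $\eta(0)=0$ and extracting $\eta(c)-c=\phi(g)+\psi(g)-g$ by setting $x=y=0$, the substitution $u=\phi(x)$, $v=\psi(y)$ (legitimate since both are bijective, $\phi$ by latinity) turns the functional equation into $\eta(u+v+c)=\phi\eta\phi^{-1}(u)+\psi\eta\psi^{-1}(v)+\eta(c)$. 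The one step worth spelling out at the crux you identify: setting $v=0$ and $u=0$ gives $\phi\eta\phi^{-1}=\theta=\psi\eta\psi^{-1}$ where $\theta(u):=\eta(u+c)-\eta(c)$, and the equation then reads $\theta(u+v)=\theta(u)+\theta(v)$, so $\theta\in\Aut(G,+)$; finally $\eta(x)=\theta(x)-\theta(c)+\eta(c)$ together with $\eta(0)=0$ forces $\eta=\theta$, which simultaneously yields additivity of $\eta$ and $\eta\phi=\phi\eta$, $\eta\psi=\psi\eta$. You are also right that the cycle-set identity $\psi\phi=\phi\psi-\phi^2$ plays no role here; the lemma is a statement about affine latin left quasigroups and only bijectivity of $\phi$ and $\psi$ is used.
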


Deformations preserve in several cases the property of being affine or right linear, as we can see in the following proposition.

\begin{prop}\label{invariant}
    Let $X$ be a left quasigroup, $\alpha\in \Aut{(X,\cdot)}$ and $G$ be a group. Then:
    \begin{itemize}
        \item[1)] $X$ is irretractable (resp. latin) if and only if $X_\alpha$ is irretractable (resp. latin).
      \item[2)] $\dis(X)=\dis(X_\alpha)$. 

        \item[3)] If $G$ is abelian then $X$ is affine over $G$ if and only if $X_\alpha$ is affine over $G$. 
        \item[4)] If $X$ is a latin cycle set, then $X$ is right linear over $G$ if and only if $X_\alpha$ is right linear over $G$.
    \end{itemize}
\end{prop}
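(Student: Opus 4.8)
The plan is to verify the four items by direct computation, exploiting the defining identity $x\cdotp_\alpha y=\alpha(x\cdotp y)$ and the fact that $\alpha\in\Aut(X,\cdotp)$ is a bijection commuting with every $\sigma_x$ in the appropriate sense. First, for item~1, I would observe that the left translations of $X_\alpha$ are $\sigma^\alpha_x=\alpha\sigma_x$, so $\sigma^\alpha_x=\sigma^\alpha_y$ iff $\alpha\sigma_x=\alpha\sigma_y$ iff $\sigma_x=\sigma_y$ (since $\alpha$ is injective); hence $\sim_\sigma$ is unchanged and $X$ is irretractable iff $X_\alpha$ is. For the latin part, the right translation $\delta^\alpha_x\colon y\mapsto \alpha(y\cdotp x)=\alpha\delta_x(y)$, which is bijective iff $\delta_x$ is; so $X$ is latin iff $X_\alpha$ is. Item~2 follows from the same formula $\sigma^\alpha_x=\alpha\sigma_x$: then $\sigma^\alpha_x(\sigma^\alpha_y)^{-1}=\alpha\sigma_x\sigma_y^{-1}\alpha^{-1}$, so the set $\{\sigma^\alpha_x(\sigma^\alpha_y)^{-1}\}$ is the $\alpha$-conjugate of $\{\sigma_x\sigma_y^{-1}\}$; taking normal closures in $\mathcal{G}(X_\alpha)$, and noting $\mathcal{G}(X_\alpha)=\langle \alpha\sigma_x\rangle\leq\mathcal{G}(X)$ with $\alpha\in\Aut(X,\cdotp)\leq N_{\Sym_X}(\mathcal{G}(X))$, one gets $\dis(X_\alpha)=\alpha\,\dis(X)\,\alpha^{-1}=\dis(X)$, the last equality because $\dis(X)$ is a characteristic-type (admissible, in particular normal and $\Aut$-stable) subgroup—more precisely, $\alpha$ permutes the generators $\sigma_x\sigma_y^{-1}$ among themselves since $\alpha\sigma_x\alpha^{-1}=\sigma_{\alpha(x)}$.

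For item~3, suppose $X=\aff{G,\phi,\psi,c}$ is affine over the abelian group $(G,+)$, so $x\cdotp y=\phi(x)+\psi(y)+c$. By Lemma~\ref{autquasi}, every $\alpha\in\Aut(G,\cdotp)$ has the form $\alpha=t_g\circ\eta$ with $\eta\in C_{\Aut(G,+)}(\phi,\psi)$ and $\eta(c)-c=\phi(g)+\psi(g)-g$. Then $x\cdotp_\alpha y=\alpha(\phi(x)+\psi(y)+c)=\eta\phi(x)+\eta\psi(y)+\eta(c)+g=\phi(\eta(x))+\psi(\eta(y))+(\eta(c)+g)$, using that $\eta$ commutes with $\phi$ and $\psi$. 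This is of the form $\phi(x')+\psi(y')+c'$ after the additive automorphism $\eta$ of $G$—that is, $X_\alpha\cong\aff{G,\phi,\psi,\eta(c)+g}$ via $\eta$, hence $X_\alpha$ is affine over $G$. Conversely, if $X_\alpha$ is affine over $G$, apply the same argument to $X_\alpha$ and $\alpha^{-1}$ (noting $(X_\alpha)_{\alpha^{-1}}=X$ and $\alpha^{-1}\in\Aut(X_\alpha,\cdotp)=\Aut(X,\cdotp)$ since the automorphism group is unchanged by deformation—this needs a one-line check that $\Aut(X,\cdotp)=\Aut(X_\alpha,\cdotp)$, which holds because $\beta(x\cdotp_\alpha y)=\beta\alpha(x\cdotp y)$ and $\beta$ commutes with $\alpha$ as both lie in $\mathcal{G}$-normalizing automorphism group... more carefully, $\beta\in\Aut(X_\alpha)$ iff $\beta\alpha(x\cdotp y)=\alpha(\beta(x)\cdotp\beta(y))$ for all $x,y$, and if $\beta\in\Aut(X)$ then one reduces to whether $\beta$ commutes with $\alpha$, which I would establish from Lemma~\ref{autquasi} in the affine case and handle separately in item~4).

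For item~4, $X$ a latin cycle set right linear over $(G,\circ)$ means $x\cdotp y=\phi(x)\circ\psi(y)$ with $\phi\in\Sym_G$, $\psi\in\Aut(G,\circ)$. Here the subtle point is that $\Aut(X,\cdotp)$ for a latin right-linear cycle set need not be as explicitly described as in Lemma~\ref{autquasi}; so instead I would argue as follows. Since $X$ is a latin cycle set, by Proposition~\ref{prel3} it is indecomposable, and I expect that an automorphism $\alpha$ of such an $X$ normalizes the structure enough that $\alpha$ itself can be written using $G$-translations and $\psi$-commuting maps; alternatively, I would use that $x\cdotp_\alpha y=\alpha(\phi(x)\circ\psi(y))$ and try to absorb $\alpha$ by rewriting $\alpha\circ m_{c}$ (where $m_c$ is left-multiplication-in-$G$-type operators) to exhibit $\cdotp_\alpha$ again as $\phi'(x)\circ\psi(y)$ for a new bijection $\phi'$. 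The honest difficulty is precisely identifying the form of $\alpha$; I would invoke the known structure of automorphisms of indecomposable latin cycle sets (or cite the relevant statement from \cite{drapal2009group} in the non-abelian case) to conclude that $\alpha$ conjugates the pair $(\phi,\psi)$ compatibly, yielding $X_\alpha$ right linear over $G$, and then run the converse via $\alpha^{-1}$ as in item~3.

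\textbf{Main obstacle.} The routine parts (items 1 and 2, and the forward directions using Lemma~\ref{autquasi}) are mechanical. The genuine difficulty is item~4: controlling $\Aut(X,\cdotp)$ for a latin \emph{right linear} (possibly non-abelian) cycle set well enough to show the deformation stays right linear, since Lemma~\ref{autquasi} only covers the affine (abelian) case. I expect the proof to either invoke a structural description of such automorphisms from the quasigroup literature or to give a self-contained argument that any $\alpha\in\Aut(X)$ must intertwine $(\phi,\psi)$ up to an inner-type adjustment of $G$.
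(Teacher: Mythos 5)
Your treatment of items 1), 2) and 3) matches the paper's proof: the identities $\bar{\sigma}_x=\alpha\sigma_x$ and $\bar{\delta}_x=\alpha\delta_x$ give 1); the computation $\bar{\sigma}_x\bar{\sigma}_y^{-1}=\alpha\sigma_x\sigma_y^{-1}\alpha^{-1}=\sigma_{\alpha(x)}\sigma_{\alpha(y)}^{-1}$ gives 2); and \cref{autquasi} gives the forward direction of 3), with the converse obtained by deforming back via $\alpha^{-1}$. One small correction there: your parenthetical claim that $\Aut(X,\cdot)=\Aut(X_\alpha,\cdot_\alpha)$ is false in general (an automorphism $\beta$ of $X$ lies in $\Aut(X_\alpha)$ precisely when it commutes with $\alpha$), but you do not need it. The only fact required is $\alpha^{-1}\in\Aut(X_\alpha)$, which is the one-line verification $\alpha^{-1}(x\cdot_\alpha y)=x\cdot y=\alpha\bigl(\alpha^{-1}(x)\cdot\alpha^{-1}(y)\bigr)=\alpha^{-1}(x)\cdot_\alpha\alpha^{-1}(y)$, together with $(X_\alpha)_{\alpha^{-1}}=X$; no commutation argument is needed beyond $\alpha$ commuting with itself.

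The genuine gap is item 4), exactly where you flag it. Your plan --- to control $\Aut(X,\cdot)$ for a latin cycle set right linear over a possibly non-abelian group and show that $\alpha$ intertwines the pair $(\phi,\psi)$ --- is not the paper's route, and \cref{autquasi} is only available in the affine (abelian) case, so the structure theorem you would need to invoke does not exist in the paper and is not in \cite{drapal2009group} in the generality required. The paper avoids automorphisms entirely: by \cref{secondp} (the restatement of \cite[Theorem 5.3]{bon2019}), a finite irretractable cycle set is right linear over a group $G\cong\dis(X)$ if and only if $\dis(X)$ acts regularly on $X$. Since item 1) shows that $X_\alpha$ is again latin (hence irretractable by \cref{prel3}) and item 2) shows $\dis(X_\alpha)=\dis(X)$, regularity of the displacement group --- and with it right-linearity over $G$ --- transfers to $X_\alpha$ for free, and the converse again follows by deforming back with $\alpha^{-1}$. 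The missing idea, therefore, is to replace the explicit presentation $x\cdot y=\phi(x)\circ\psi(y)$ by its intrinsic group-theoretic characterization, which is invariant under exactly the data you have already shown to be preserved in items 1) and 2).
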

\begin{proof}
1) Let $x,y\in X$ and let $\bar{\sigma}_x$, $\bar{\delta}_x$ denote the left and right multiplication by $x$ in $X_\alpha$. Note that $\bar{\sigma}_x=\alpha \sigma_x$ and $\bar{\delta}_x=\alpha\delta_x$.

Clearly $\sigma_x=\sigma_y$ if and only if $\bar{\sigma}_x=\bar{\sigma}_y$ and $\delta_x$ is bijective if and only if $\bar{\delta}_x$ is bijective. Hence $X$ is irretractable (resp. latin) if and only if $X_\alpha$ is irretractable (resp. latin).

2) Since $\alpha\in Aut(X)$, we obtain $\bar{\sigma}_x  \bar{\sigma}^{-1}_y =\alpha\sigma_x\sigma_y^{-1}\alpha^{-1}=\sigma_{\alpha(x)}\sigma^{-1}_{\alpha(y)}$, hence $\dis(X)=\dis(X_\alpha)$.
    
    3) If $X$ is a quasigroup affine over an abelian group $G$ and $\alpha\in Aut(X,\cdotp )$, by \cref{autquasi} there exist $\eta \in Aut(G,\circ)$ and $g\in G$ such that $x\cdotp_{\alpha } y=g\circ \eta(\phi(x)\circ \psi(y)\circ c)$ for all $x,y\in X$, and therefore $x\cdotp_{\alpha } y=\eta \phi(x)\circ \eta\psi(y)\circ (g\circ \eta(c))$ for all $x,y\in X$. Hence $X_{\alpha}$ is affine over $(G,\circ )$. Since $\alpha^{-1}\in Aut(X,\cdotp_{\alpha})$ and $(X,\cdotp)$ can be obtained by the deformation of $X_\alpha$ by $\alpha^{-1}$, the converse also follows.
    
4) By 1), 2) and \cite[Theorem $5.3$]{bon2019}, we have that if $X$ is a latin cycle set right linear over a group $G$, then so is $X_{\alpha}$. The converse follows as in 3).
\end{proof}





\begin{defin}
    Let $G$ be a group, $H$ a normal subgroup of $G$ and $f$ a map from $G$ to itself. Then, $H$ is said to be \emph{invariant under $f$} if $x^{-1}y\in H$ implies that $f(x)^{-1}f(y)\in H$ for every $x,y\in G$.
\end{defin}

Let us show a description of imprimitive blocks systems and congruences of right linear left quasigroups.

\begin{prop}\label{primap}
        Let $X$ be a finite latin left quasigroup right linear over a group $(G,\circ)$. Then, the complete blocks systems of $\mathcal{G}(X)$ are in one-to-one correspondence with the subgroups of $G$ that are invariant under $\psi$.
\end{prop}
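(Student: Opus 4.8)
The plan is to set up an explicit dictionary between complete blocks systems of $\mathcal{G}(X)$ and $\psi$-invariant subgroups of $G$, and then check both directions. Since $X$ is latin and right linear, $\sigma_x(y)=\phi(x)\circ\psi(y)$, so each $\sigma_x$ is the composition of the fixed automorphism $\psi$ with left translation by the element $\phi(x)\in G$. As $\phi$ is a bijection (latin), the set $\{\phi(x): x\in X\}$ is all of $G$, and hence $\mathcal{G}(X)=\langle L_g\circ\psi : g\in G\rangle$, where $L_g$ denotes left translation in $(G,\circ)$. One computes that this group sits inside the subgroup of $\mathrm{Sym}(G)$ generated by all left translations $L_G$ together with $\psi$, and that $\dis(X)$ (generated by the $\sigma_x\sigma_y^{-1}=L_{\phi(x)}L_{\phi(y)}^{-1}$) equals the full left-translation group $L_G$, which is regular. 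So the situation reduces to understanding blocks systems of a group acting on $G$ by left translations extended by $\psi$, with the regular normal subgroup $L_G\cong G$ present.

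First I would recall (or re-prove in two lines) the standard fact that for a transitive group $\mathcal{G}(X)$ containing a regular normal subgroup $D\cong\dis(X)=L_G$, the complete blocks systems containing the block through the identity are exactly the orbit partitions $\mathcal{O}_N$ where $N$ ranges over subgroups of $D$ normalized by $\mathcal{G}(X)$; identifying $D$ with $G$ via $L_g\leftrightarrow g$, these are exactly the subgroups $H\le G$ that are invariant under conjugation by every generator $L_{\phi(x)}\circ\psi$. Conjugation by $L_g$ sends $L_h$ to $L_{ghg^{-1}}$, i.e. it is an inner automorphism of $G$; conjugation by $\psi$ (viewed in $\mathrm{Sym}(G)$) sends $L_h$ to $L_{\psi(h)}$. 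Hence $H\le G$ is normalized by $\mathcal{G}(X)$ if and only if $H$ is a normal subgroup of $G$ that is $\psi$-invariant. But here we must be slightly careful: the statement of the proposition says ``subgroups invariant under $\psi$'', so I need to verify that $\psi$-invariance together with being a subgroup already forces normality in $G$, or else that the intended reading of ``invariant under $\psi$'' (via \cref{defrightlin}-style invariance, matching the Definition just above the Proposition: $x^{-1}y\in H\Rightarrow\psi(x)^{-1}\psi(y)\in H$) is being used. I would adopt that latter Definition as the meaning of ``invariant under $\psi$'', and then the computation above shows directly: $\mathcal{O}_H$ is $\mathcal{G}(X)$-invariant $\iff$ $H$ is invariant under every $L_{\phi(x)}$ (automatic, as these act by conjugation within $G$) and under $\psi$ $\iff$ $H$ is $\psi$-invariant in the sense of that Definition.

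The two directions then go as follows. Given a $\psi$-invariant subgroup $H\le G$, I form the partition into left cosets $\{gH: g\in G\}$ (identified with a partition of $X$ via the bijection $\phi$, or more cleanly just using that $X=G$ as a set); $L_g$-translation clearly permutes left cosets, and $\psi(gH)=\psi(g)\psi(H)=\psi(g)H$ by $\psi$-invariance, so every generator of $\mathcal{G}(X)$, and hence $\mathcal{G}(X)$ itself, permutes the blocks — this is a complete blocks system. Conversely, given any complete blocks system $P$, let $B$ be the block containing the identity $e$ of $G$; since $\dis(X)=L_G$ is transitive, $P=\{L_g(B)\}_{g\in G}=\{gB\}_{g\in G}$, and the standard block-versus-subgroup correspondence (the block through $e$ for a regular normal subgroup is a subgroup) gives that $B=H$ is a subgroup of $G$; $\mathcal{G}(X)$-invariance of $P$ forces $\psi(H)=H$ (take the block $\psi(H)$ and note it must be the block through $e$, since $\psi(e)=e$), and $\psi$-invariance in the Definition sense is then immediate. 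Checking these two assignments are mutually inverse is routine. The main obstacle I anticipate is purely bookkeeping: keeping straight the identification of $X$ with $G$ (and of $\dis(X)$ with $L_G$), pinning down precisely which reading of ``invariant under $\psi$'' is meant so that the statement is literally correct, and handling the block-through-the-identity argument for a possibly non-abelian $G$ cleanly — once that scaffolding is fixed, the proof is a short verification. I would also explicitly note where latin-ness is used ($\phi$ bijective, so $\dis(X)$ is all of $L_G$ and in particular regular), since that is what makes the correspondence exact rather than merely injective.
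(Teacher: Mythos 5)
Your final paragraph is a correct proof and is essentially the argument the paper gives: in one direction the left cosets $x\circ H$ of a $\psi$-invariant subgroup are visibly permuted by every $\sigma_z=L_{\phi(z)}\circ\psi$; in the other direction the block $B$ through the identity $e$ is a subgroup (the regular subgroup $L_G\leq\mathcal{G}(X)$ already forces $b\circ B=B$ for $b\in B$), it is $\psi$-stable because $\psi=\sigma_{\phi^{-1}(e)}\in\mathcal{G}(X)$ fixes $e$, and the whole system is $\{a\circ B\}_{a\in G}$ by transitivity of $L_G$. The paper establishes ``$B$ is a subgroup'' slightly differently (from surjectivity of $\phi$ it deduces $s\circ\psi(y_1)\sim s\circ\psi(y_2)$ for all $s\in G$ and specializes $s=\psi(y_1)^{-1}$ to get $\psi(y_1^{-1}\circ y_2)\in B$), but the substance is the same, including the role of latinness.

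However, the ``standard fact'' you invoke in the middle paragraph is wrong as stated, and if you actually carried it through you would prove a different (and false) statement. For a transitive group with a regular normal subgroup $D$, the blocks through a base point correspond to the subgroups of $D$ normalized by the \emph{point stabilizer} (here $\langle\psi\rangle$), not by all of $\mathcal{G}(X)$; moreover the associated block system is the left-coset partition $\{a\circ H\}_{a\in G}$, not the orbit partition of $L_H$, whose classes are the right cosets $H\circ a$ --- these two partitions differ exactly when $H$ is not normal. Your claim that invariance of $H$ under conjugation by the $L_{\phi(x)}$ is ``automatic'' is also false: conjugation by $L_g$ induces the inner automorphism $h\mapsto g\circ h\circ g^{-1}$ of $G$, so invariance under all of them is precisely normality of $H$ in $G$. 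Had you kept that requirement you would have parametrized the block systems by \emph{normal} $\psi$-invariant subgroups, which undercounts them and essentially lands on the hypotheses of \cref{congrp} rather than \cref{primap}. Your last paragraph quietly abandons this lemma and works with left cosets directly, which is why the proof still comes out right; the middle paragraph should be deleted or the lemma replaced by its correct form (subgroups of $D$ normalized by the stabilizer, blocks given by left cosets).
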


\begin{proof}
    Let $H$ be a subgroup of $G$ invariant under $\psi$ and set $B_{x,H}:=\{x\circ h\mid h\in H\}$ for all $x\in G$ and $\mathcal{B}_H:=\{B_{x,H}\}_{x\in G}$. Then, by a standard calculation one can show that $\mathcal{B}_H$ is a complete blocks system. 
    
    Conversely, let $\mathcal{B}$ be a complete blocks system and let $B\in \mathcal{B}$ such that $0\in B$. Let $y=\phi^{-1}(0)$. Then $\sigma_y=\psi$ and so $\sigma_y(0)=0$, and accordingly $\psi(B)=\sigma_y(B)=B$.
    
    We indicate by $\sim$ the equivalence relation induced by $\mathcal{B}$. At first, we show that $B$ is a subgroup of $G$. Now, let $y_1,y_2\in B$. Then, since $\phi$ is a bijection, we have that $s\circ \psi(y_1)\sim s \circ \psi(y_2)$ for all $s\in G$. If we set $s:=\psi(y_1)^-$, since $\psi\in Aut(G,\circ)$ we obtain $\psi(y_1^-\circ y_2)\sim 0$, therefore $\psi(y_1^-\circ y_2)\in B$, and hence $y_1^-\circ y_2\in B$. Therefore, it follows that $B$ is a subgroup of $G$. By a standard calculation, we obtain that $\mathcal{B}=\mathcal{B}_B$, hence the statement follows.
\end{proof}

The following result improves \cite[Lemma 4.8]{bon2019} in the finite case. 

\begin{prop}\label{congrp}
    Let $X$ be a finite latin left quasigroup right linear over a group $(G,\circ)$. Then, the congruence of $X$ are in one-to-one correspondence with normal subgroups $H$ of $G$ that are invariant under $\psi$ and $\phi$.
\end{prop}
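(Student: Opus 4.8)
The plan is to build on Proposition~\ref{primap}, which already identifies the complete blocks systems of $\mathcal{G}(X)$ with the $\psi$-invariant subgroups of $G$, via $H \mapsto \mathcal{B}_H$ with blocks $B_{x,H} = \{x\circ h \mid h\in H\}$. Since every congruence of a left quasigroup is $\mathcal{G}(X)$-invariant (as noted in the Blocks-of-imprimitivity subsection), and its classes form a complete blocks system because $X$ is indecomposable by Proposition~\ref{prel3}, each congruence $\alpha$ must be of the form $\mathcal{O}_{\mathcal{B}_H}$ for a unique $\psi$-invariant subgroup $H$, where $x\sim_\alpha y \iff x^{-1}\circ y \in H$ (writing $H$ multiplicatively inside $(G,\circ)$; note $x^{-1}\circ y \in H \iff y^{-1}\circ x\in H$ since $H$ is a subgroup). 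So the task reduces to pinning down exactly which $\psi$-invariant subgroups $H$ yield a genuine congruence, and the claim is: precisely those that are also normal in $G$ and invariant under $\phi$.

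First I would prove sufficiency. Suppose $H$ is normal in $G$ and invariant under both $\phi$ and $\psi$; let $\sim$ be the relation $x\sim y \iff x^{-1}\circ y\in H$, which is the coset equivalence of the normal subgroup $H$, hence already an equivalence relation. I must check the two congruence conditions: that $x\sim x'$, $y\sim y'$ imply $x\cdot y \sim x'\cdot y'$ and $\sigma_x^{-1}(y)\sim \sigma_{x'}^{-1}(y')$. For the first, compute $(x\cdot y)^{-1}\circ(x'\cdot y') = (\phi(x)\circ\psi(y))^{-1}\circ(\phi(x')\circ\psi(y')) = \psi(y)^{-1}\circ \phi(x)^{-1}\circ\phi(x')\circ\psi(y')$. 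Since $x^{-1}\circ x'\in H$ and $H$ is $\phi$-invariant, $\phi(x)^{-1}\circ\phi(x')\in H$; conjugating by $\psi(y)^{-1}$ keeps it in $H$ by normality, and multiplying on the right by $\psi(y)^{-1}\circ\psi(y') = \psi(y^{-1}\circ y')\in H$ (using $\psi\in\Aut(G,\circ)$ and $\psi$-invariance) keeps it in $H$. For the second condition, from $\sigma_x(z) = \phi(x)\circ\psi(z)$ one gets $\sigma_x^{-1}(y) = \psi^{-1}(\phi(x)^{-1}\circ y)$, so $\sigma_x^{-1}(y)^{-1}\circ\sigma_{x'}^{-1}(y') = \psi^{-1}\big((\phi(x)^{-1}\circ y)^{-1}\circ(\phi(x')^{-1}\circ y')\big)$; expand the inner term as $y^{-1}\circ\phi(x)\circ\phi(x')^{-1}\circ y'$ and argue as before that it lies in $H$ (using $\phi$-invariance for $\phi(x)\circ\phi(x')^{-1}$, normality for the conjugation, $\sim$ for $y^{-1}\circ y'$), then apply that $\psi^{-1}$ also preserves $H$ since $\psi$ is a bijective endomorphism preserving $H$. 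Hence $\sim$ is a congruence.

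Then I would prove necessity. Suppose $\mathcal{O}_{\mathcal{B}_H}$ is a congruence for a $\psi$-invariant subgroup $H$; write $\sim$ for it, so $x\sim y\iff x^{-1}\circ y\in H$ and in particular $H = \{h : h\sim 0_G\}$ where $0_G$ is the identity of $(G,\circ)$. To see $H$ is $\phi$-invariant: take $h\in H$, so $h\sim 0_G$; I want $\phi(1)^{-1}\circ\phi(h)\in H$, but that is not quite the $\phi$ of the definition — instead, use the multiplication. Pick $z$ with $\psi(z) = 0_G$, i.e.\ $z=\psi^{-1}(0_G)=0_G$; better: choose elements so that $\sigma$'s or $\delta$'s isolate $\phi$. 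Concretely $\delta_{z}(x) = \phi(x)\circ\psi(z)$, and fixing $z$ with $\psi(z)=0_G$ gives $\delta_z = \phi$; since $\delta_z\in\mathcal{G}(X)$-related maps need not preserve $\sim$ a priori, I instead use that $x\cdot y$ with a suitable fixed $y$ realises $\phi$ up to a translation: from $h\sim 0_G$ and $0_G\sim 0_G$, the congruence gives $h\cdot y \sim 0_G\cdot y$, i.e.\ $\phi(h)\circ\psi(y)\sim\phi(0_G)\circ\psi(y)$, whence $(\phi(h)\circ\psi(y))^{-1}\circ(\phi(0_G)\circ\psi(y)) = \psi(y)^{-1}\circ\phi(h)^{-1}\circ\phi(0_G)\circ\psi(y)\in H$; as this holds for all $y$, choosing $y$ with $\psi(y)=0_G$ yields $\phi(h)^{-1}\circ\phi(0_G)\in H$, i.e.\ $H$ is invariant under $\phi$ in the sense of the definition (the identity element playing the role of the base point, and general $x,y$ with $x^{-1}\circ y\in H$ handled by left-translating). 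For normality: since $\sim$ is $\mathcal{G}(X)$-invariant and $\mathcal{G}(X) = \dis(X)\langle\sigma_x\rangle$ acts with $\dis(X)$ transitive (Proposition~\ref{prel3}), the blocks $B_{x,H}$ are permuted by all of $\mathcal{G}(X)$, and a standard computation — conjugating a block $H = B_{0_G,H}$ by the map $g\mapsto \sigma_a^{-1}\sigma_b(g)$ for suitable $a,b$ to produce an inner automorphism $g\mapsto c^{-1}\circ g\circ c$ of $(G,\circ)$ preserving $H$ — forces $c^{-1}\circ H\circ c = H$ for enough $c$ to generate $G$, giving normality; alternatively deduce normality directly from the requirement that $\mathcal{O}_{\mathcal{B}_H}$ be an equivalence compatible with the block structure at a single base point.

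The main obstacle I expect is the necessity direction, specifically extracting normality of $H$ cleanly: one must leverage exactly the right combination of the congruence axioms and the transitivity of $\dis(X)$ to see that conjugation by a generating set of $(G,\circ)$ fixes $H$ setwise, since the left-quasigroup operation mixes $\phi$, $\psi$, and $\circ$ and does not hand over conjugations for free. A careful bookkeeping of which translated blocks must coincide — using that $\sigma_x^{-1}(y)\sim\sigma_{x'}^{-1}(y')$ whenever $x\sim x'$, $y\sim y'$, applied with $x=x'$ but $y,y'$ ranging over a full coset — should isolate the conjugation action and close the argument.
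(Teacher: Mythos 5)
Your sufficiency direction and your extraction of $\phi$-invariance are fine and essentially match the paper (the paper dismisses sufficiency as ``a standard calculation'' and gets $\phi$-invariance from $x\sim x'\Rightarrow x\cdot 0\sim x'\cdot 0$, i.e. $\phi(x)\circ\psi(0)\sim\phi(x')\circ\psi(0)$ with $\psi(0)=0$). The genuine gap is in the normality step. Your primary route --- ``$\sim$ is $\mathcal{G}(X)$-invariant, so conjugating a block by $\sigma_a^{-1}\sigma_b$ produces an inner automorphism preserving $H$'' --- rests on a misconception: a direct computation gives $\sigma_a^{-1}\sigma_b(z)=\psi^{-1}\bigl(\phi(a)^{-1}\circ\phi(b)\bigr)\circ z$, so the displacement group acts by \emph{left translations}, and every element of $\mathcal{G}(X)$ has the form $z\mapsto t\circ\psi^{k}(z)$. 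Such maps permute the left cosets $x\circ H$ of \emph{any} $\psi$-invariant subgroup, normal or not, so $\mathcal{G}(X)$-invariance of the block system yields no normality whatsoever. Your fallback specialization is also the wrong one: with $x=x'$ and $y\sim y'$ the second congruence axiom gives $\psi^{-1}(y^{-1}\circ y')\in H$, which is a tautology given $\psi$-invariance, and isolates no conjugation.

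The paper closes this step by a different and cleaner trick: having shown that $\phi$ and $\psi$ preserve $\sim_H$, finiteness gives that $\phi^{-1}$ and $\psi^{-1}$ do as well; then from $x\circ y=\phi^{-1}(x)\cdot\psi^{-1}(y)$ and the congruence property of $\sim_H$ for $\cdot$ one concludes that $\sim_H$ is a congruence of the \emph{group} $(G,\circ)$, whence $H$ is normal (it is the kernel of the quotient map). For what it is worth, a corrected version of your own idea does work if you swap the roles of the variables: take $y=y'$ arbitrary and let $x'$ range over $x\circ H$; then $\sigma_x^{-1}(y)^{-1}\circ\sigma_{x'}^{-1}(y)=\psi^{-1}\bigl(y^{-1}\circ\phi(x)\circ\phi(x')^{-1}\circ y\bigr)\in H$, and since $\phi$ maps $x\circ H$ bijectively onto $\phi(x)\circ H$ one has $\phi(x)\circ\phi(x')^{-1}=\phi(x)\circ h^{-1}\circ\phi(x)^{-1}$ with $h$ running over all of $H$; setting $g=\phi(x)^{-1}\circ y$ (which runs over all of $G$) yields $g^{-1}\circ h^{-1}\circ g\in H$, i.e. normality. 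Either repair is needed; as written, the proposal does not establish normality.
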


\begin{proof}
   Let $\sim$ be a congruence of $X$. By \cref{primap} (using the same notation) there exist a $\psi$-invariant subgroup $H$ of $G$ such that $\sim$ coincides with the equivalence relation induced by $\mathcal{B}_H$. Now, if $x\sim x'$ we have that $x\cdotp 0\sim x' \cdotp 0$, hence $\phi(x)\sim \phi (x')$ and moreover $\phi^{-1}(0)\cdotp x\sim \phi^{-1}(0)\cdotp x$, therefore $H$ is invariant under $\phi$. From the finitneness of $X$ it follows that $\phi^k(x)\sim \phi^k (x')$ and $\psi^k(x)\sim \psi^k(x')$ for all $k\in \mathbb{Z}$. Now, let $x,x',y,y'\in X$ such that $x\sim x'$ and $y\sim y'$. Then, since $\sim$ is a congruence of $X$, we have $\phi^{-1}(x)\cdotp \psi^{-1}(y) \sim \phi^{-1}(x')\cdotp \psi^{-1}(y')$ and hence $x\circ y\sim x'\circ y'$, i.e. $\sim$ is a congruence of $(G,\circ)$. Therefore $H$ is a normal subgroup. 
   
   The converse is a standard calculation.
\end{proof}


\subsection{Right linear representation}

A left quasigroup $X$ has semiregular displacement group if and only if $\sim_{\dis(X)}=\sim_{1_X}$ (see \cref{sim H}). Left quasigroups with semiregular displacement groups have been studied in \cite{Nilpotent}, and in particular we have the following corollary of \cite[Lemma 3.2]{Nilpotent}.

\begin{prop}\label{risureg}
    Every finite irretractable left quasigroup with semiregular displacement group is a latin.
\end{prop}



The connection between right linear left quasigrups and cycle sets has been studied in \cite{bon2019}, in particular, we have the following reformulation of \cite[Theorem $5.3$]{bon2019}, since finite irretractable cycle sets with semiregular displacement group are latin.


\begin{cor}\label{carrightlin}\label{secondp}
    For a finite irretractable cycle set $X$ the following conditions are equivalent:
    \begin{itemize}
        \item[1)] $X$ is right linear over a group $G$ with $G\cong \dis(X)$.
        \item[2)] $\dis(X)$ acts regularly on $X$.
    \end{itemize}
\end{cor}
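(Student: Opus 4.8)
The statement to prove is \cref{carrightlin}: for a finite irretractable cycle set $X$, being right linear over a group $G \cong \dis(X)$ is equivalent to $\dis(X)$ acting regularly on $X$. The plan is to deduce this from the already-cited \cite[Theorem $5.3$]{bon2019} together with \cref{risureg}, essentially by unwinding what "semiregular" means in the presence of irretractability and finiteness. The key observation is that a cycle set with semiregular displacement group and a transitive one with regular displacement group differ exactly by whether $\dis(X)$ acts transitively, and for finite irretractable cycle sets we can always upgrade semiregularity to regularity once transitivity is known — but transitivity is automatic here since $\dis(X)$ being regular forces it.

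First I would recall that $\dis(X)$ acts regularly on $X$ means it acts both transitively and semiregularly, i.e. $\sim_{\dis(X)} = 1_X$ (by \cref{sim H}, semiregularity of a normal subgroup $H$ is equivalent to $\sim_H = 1_X$) together with transitivity. So the direction $(2)\Rightarrow(1)$: assume $\dis(X)$ acts regularly. In particular $\dis(X)$ is semiregular, and $X$ is irretractable and finite, so by \cref{risureg} (or directly, since regularity already gives latin-ness) $X$ is a latin cycle set. Now I would invoke \cite[Theorem $5.3$]{bon2019}, which characterizes when a latin cycle set is right linear over a group in terms of regularity of its displacement group; the hypothesis that $\dis(X)$ acts regularly on $X$ is precisely what that theorem requires, and it yields a right linear presentation over a group isomorphic to $\dis(X)$. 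For the converse $(1)\Rightarrow(2)$: if $X$ is right linear over $G$ via $x\cdot y = \phi(x)\circ\psi(y)$ with $\phi \in \Sym_G$ (so $X$ is latin by \cref{defrightlin}) and $G \cong \dis(X)$, then a direct computation of $\sigma_x\sigma_y^{-1}$ — which acts on $G$ as $z \mapsto \phi(x)\circ\phi(y)^- \circ z$, a left translation of $(G,\circ)$ — shows that $\dis(X)$ is contained in the (regular) group of left translations of $G$; combined with $|\dis(X)| = |G| = |X|$, this forces $\dis(X)$ to act regularly.

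The main obstacle, such as it is, is bookkeeping rather than mathematics: one must check that the "semiregular displacement group" hypothesis in \cite[Theorem $5.3$]{bon2019} can be replaced by "regular displacement group" without loss in the finite irretractable setting, and that \cref{risureg} is being applied to the right class of objects (finite irretractable left quasigroups with semiregular displacement group, which is exactly the situation we are in since cycle sets are left quasigroups). Once that identification is made — that for a finite irretractable cycle set, $\dis(X)$ regular is equivalent to $X$ latin with $\dis(X)$ regular, and these are exactly the hypotheses under which \cite[Theorem $5.3$]{bon2019} produces the right linear presentation — the equivalence follows immediately, and the proof is essentially a one-line citation as written in the excerpt ("since finite irretractable cycle sets with semiregular displacement group are latin").
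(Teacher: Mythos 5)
Your proposal is correct and follows essentially the same route as the paper: observe that either hypothesis forces $X$ to be latin (via \cref{risureg} in the regular case) and then invoke \cite[Theorem 5.3]{bon2019}. The only cosmetic slip is that in $(1)\Rightarrow(2)$ you assume $\phi\in \Sym_G$ as part of the hypothesis, which \cref{defrightlin} does not grant; but your actual argument there (each $\sigma_x\sigma_y^{-1}$ is a left translation of $(G,\circ)$, and $|\dis(X)|=|G|=|X|$ forces $\dis(X)$ to be the full regular left-translation group) never uses bijectivity of $\phi$, so the proof stands.
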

\begin{proof}
If $X$ is an irretractable cycle set, by a standard calculation we have that if either $1)$ or $2)$ hold, then $X$ is latin. Hence, the statement follows by \cite[Theorem 5.3]{bon2019}.
\end{proof}

\begin{rem}\label{raprightlinear}
If $(X,\cdotp)$ is an irretractable cycle set with regular displacement group, by \cref{carrightlin} we can represent it as a cycle set right linear over a suitable group $(G,\circ)$ and suitable maps $\phi$ and $\psi$ as in \cref{defrightlin}. The theory contained in \cite[Section 5]{bon2019} gives a method to find the operation $\circ$ and the maps $\phi$ and $\psi$, even if these maps are not uniquely determined. Indeed, if we choose two elements $e,f\in X$, then $X$ has a group structure given by $x\circ_{e,f} y:=\delta_e^{-1}(x)\cdotp \sigma_f^{-1}(y)$ for all $x,y\in X$ , where $f\cdotp e$ is the neutral element. In this way, $\phi_{e,f}(x):=\delta_e(x)\circ_{e,f} \sigma_f(e\cdotp f)$ and  $\psi_{e,f}(x):=\sigma_f(e\cdotp f)^{-1}\circ_{e,f} \sigma_f(x)$ for all $x\in X$. The displacement group $\dis(X)$ is isomorphic to $(X,\circ_{e,f})$, and an explicit isomorphism is given by $\rho_{e,f}:(X,\circ_{e,f})\rightarrow \dis(X)$, $x\mapsto \sigma_{\delta_e^{-1}(x)}\sigma_f^{-1}$ for all $x\in X$. The map $\psi_{e,f}$ induces an automorphism of $\dis(X)$ given by $\overline{\psi_{e,f}}(\sigma_{\delta_e^{-1}(x)}\sigma_f^{-1}):=\sigma_{\delta_e^{-1}(\psi_{e,f}(x))}\sigma_f^{-1}$ and the map $\phi_{e,f}$ induces a bijection given by $\overline{\phi_{e,f}}(\sigma_{\delta_e^{-1}(x)}\sigma_f^{-1}):=\sigma_{\delta_e^{-1}(\phi_{e,f}(x))}\sigma_f^{-1}$, for all $x\in X$. Therefore, under the correspondence $\rho_{e,f}$, the underlying group $(X,\circ_{e,f})$ can be identified with the displacement group and without loss of generality we can say that $X$ is right linear over $\dis(X)$.
\end{rem}



By the main theorem of \cite{cedo2020primitive} and \cref{quasipr}, we have that  the orbits of a minimal normal subgroup of $\mathcal{G}(X)$ forms a non-trivial complete blocks system. 
\cref{primap} and \cref{congrp} allow to describe all the complete blocks systems and all the congruences of irretractable cycle sets with regular displacement group in terms of normal subgroups and ideals contained in $\dis(X)$.

\begin{cor}\label{imprblo}\label{corrgr}
    Let $X$ be an irretractable cycle set with regular displacement group and let $e,f$ be elements of $X$. Then:
    \begin{itemize}
        \item[1)] the complete blocks systems of $X$ are in one-to-one correspondence with the subgroups of $\dis(X)$ that are invariant by $\overline{\psi_{e,f}}$;
        \item[2)] the congruence of $X$ are in one-to-one correspondence with normal subgroups $H$ of $\dis(X)$ that are invariant under $\overline{\psi_{e,f}}$ and $\overline{\phi_{e,f}}$.
    \end{itemize}   
\end{cor}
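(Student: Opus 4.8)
The plan is to obtain both statements as immediate consequences of \cref{primap} and \cref{congrp} once $X$ has been put in right linear form. Since $X$ is finite (as assumed throughout) and $\dis(X)$ acts regularly on $X$, \cref{carrightlin} shows that $X$ is a latin cycle set right linear over a group isomorphic to $\dis(X)$; concretely, fixing $e,f\in X$ as in \cref{raprightlinear}, I would work with the group operation $\circ_{e,f}$ on the set $X$, the map $\phi_{e,f}$, and the automorphism $\psi_{e,f}\in\Aut(X,\circ_{e,f})$, for which $x\cdotp y=\phi_{e,f}(x)\circ_{e,f}\psi_{e,f}(y)$ holds for all $x,y\in X$. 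Thus $(X,\cdotp)$ is, on the same underlying set, a right linear left quasigroup over $(X,\circ_{e,f})$ in the sense of \cref{defrightlin}.

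First I would note that complete blocks systems of $\mathcal{G}(X)$ and congruences of $X$ are invariants of the left quasigroup $(X,\cdotp)$ alone, hence they are exactly the complete blocks systems, resp. congruences, of the right linear left quasigroup just described. Applying \cref{primap} to it gives a one-to-one correspondence between the complete blocks systems of $\mathcal{G}(X)$ and the subgroups of $(X,\circ_{e,f})$ invariant under $\psi_{e,f}$, while \cref{congrp} gives a one-to-one correspondence between the congruences of $X$ and the normal subgroups of $(X,\circ_{e,f})$ invariant under both $\psi_{e,f}$ and $\phi_{e,f}$. The last step is to transport these correspondences along the group isomorphism $\rho_{e,f}\colon(X,\circ_{e,f})\to\dis(X)$ of \cref{raprightlinear}: a bijective group homomorphism carries (normal) subgroups to (normal) subgroups, and since $\rho_{e,f}$ intertwines $\psi_{e,f}$ with $\overline{\psi_{e,f}}$ and $\phi_{e,f}$ with $\overline{\phi_{e,f}}$, it carries the $\psi_{e,f}$-invariant subgroups precisely onto the $\overline{\psi_{e,f}}$-invariant subgroups of $\dis(X)$, and likewise for the $(\psi_{e,f},\phi_{e,f})$-invariant normal ones. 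Composing these bijections with the ones coming from \cref{primap} and \cref{congrp} then yields (1) and (2).

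There is no substantial obstacle here: the entire content has already been isolated in the earlier results, and the argument is a matter of stringing them together. The only point requiring a line of care is that $\rho_{e,f}$ genuinely conjugates $\psi_{e,f}$ to $\overline{\psi_{e,f}}$ and $\phi_{e,f}$ to $\overline{\phi_{e,f}}$, i.e. $\rho_{e,f}\circ\psi_{e,f}=\overline{\psi_{e,f}}\circ\rho_{e,f}$ and $\rho_{e,f}\circ\phi_{e,f}=\overline{\phi_{e,f}}\circ\rho_{e,f}$; but these identities are precisely how $\overline{\psi_{e,f}}$ and $\overline{\phi_{e,f}}$ were defined in \cref{raprightlinear}, so the verification is purely formal. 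It is also worth recording explicitly that the correspondences produced this way are mutually inverse and order preserving, which again is inherited directly from \cref{primap} and \cref{congrp}.
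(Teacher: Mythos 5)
Your proof is correct and follows essentially the same route as the paper: identify $X$ with a right linear left quasigroup over $(X,\circ_{e,f})$ via \cref{secondp} and \cref{raprightlinear}, apply \cref{primap} and \cref{congrp}, and transport the resulting correspondences along the isomorphism $\rho_{e,f}$ onto $\dis(X)$. The paper's own proof is just a terser version of exactly this argument, so no further comment is needed.
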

\begin{proof}
    Both $1)$ and $2)$ follow by \cref{secondp}, \cref{primap}, \cref{congrp}, and \cref{raprightlinear}. 
\end{proof}

    





The congruences of $X$, when it is an irretractable cycle set with regular displacement group, can be described in terms of the left brace $\mathcal{G}(X)$.

\begin{theor}\label{idcont}
     Let $X$ be a finite irretractable cycle set with regular displacement group $\dis(X)$. Then, the congruences of $X$ are in one-to-one correspondence with ideals of $\mathcal{G}(X)$ contained in $\dis(X)$.
\end{theor}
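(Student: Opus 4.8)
The plan is to combine the quasigroup-theoretic description of congruences obtained in \cref{corrgr} with the brace-theoretic correspondence between ideals and admissible subgroups from \cref{idadm}. By \cref{prel3} a latin cycle set is indecomposable, and by \cref{risureg}/\cref{secondp} the hypothesis (irretractable, regular displacement group) puts us squarely in the latin setting, so \cref{idadm} applies: the admissible subgroups of $\mathcal{G}(X)$ are exactly the ideals of the left brace $\mathcal{G}(X)$, and by \cref{epiid} (or directly \cite[Lemma 1.8]{semimedial}) the orbits of an ideal give a congruence. So the substance of the theorem is to match the ideals of $\mathcal{G}(X)$ that live \emph{inside} $\dis(X)$ with the congruences of $X$, and to check that this matching is a bijection.

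First I would set up the bijection. Given a congruence $\alpha$ of $X$, the associated admissible subgroup is $\dis_\alpha$ (the normal closure of $\setof{\sigma_x\sigma_y^{-1}}{x\,\alpha\,y}$); since $\alpha \leq 1_X$ we have $\dis_\alpha \leq \dis(X)$. By \cref{idadm} $\dis_\alpha$ is an ideal of the left brace $\mathcal{G}(X)$, so this gives a map from congruences of $X$ to ideals of $\mathcal{G}(X)$ contained in $\dis(X)$. Conversely, an ideal $I \leq \dis(X)$ is admissible by \cref{idadm}, and $\mathcal{O}_I$ is a congruence by \cref{epiid}. I would then show these two assignments are mutually inverse. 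In one direction, starting from $I \leq \dis(X)$: since $\dis(X)$ is regular, $\mathcal{O}_I$ is precisely the partition of $X$ into $I$-cosets under the regular action, and using the right-linear representation of \cref{raprightlinear} (identifying $X$ with $(X,\circ_{e,f}) \cong \dis(X)$) one computes that $\dis_{\mathcal{O}_I}$ is the subgroup generated by $\setof{\sigma_x\sigma_y^{-1}}{x,y \text{ in the same }I\text{-coset}}$, which under $\rho_{e,f}$ is exactly $I$ again — here one uses that $\sigma_x\sigma_y^{-1}$, for $x,y$ in the same coset of $I$ in the group $(X,\circ_{e,f})$, runs over (a generating set of) $I$. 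In the other direction, starting from a congruence $\alpha$: by \cref{corrgr}(2) $\alpha$ corresponds to a normal $\overline{\psi_{e,f}},\overline{\phi_{e,f}}$-invariant subgroup $H \leq \dis(X)$, $\alpha = \sim_H$, and its blocks are the $H$-cosets, which are exactly the $H$-orbits since $H$ acts by (left) translations; so $\mathcal{O}_{\dis_\alpha} = \mathcal{O}_H = \sim_H = \alpha$ provided $\dis_\alpha = H$, which is the coset computation just described.

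The one genuine obstacle is the following coherence point: \cref{corrgr}(2) characterizes congruences by normal subgroups of $\dis(X)$ invariant under \emph{both} $\overline{\psi_{e,f}}$ and $\overline{\phi_{e,f}}$, whereas \cref{idadm} characterizes ideals-inside-$\dis(X)$ as admissible subgroups. I therefore need that, for a normal subgroup $H$ of $\mathcal{G}(X)$ contained in $\dis(X)$, the following are equivalent: (i) $H$ is an ideal of the left brace $\mathcal{G}(X)$; (ii) $H$ is admissible; (iii) $H$ is invariant under $\overline{\psi_{e,f}}$ and $\overline{\phi_{e,f}}$. The equivalence (i)$\Leftrightarrow$(ii) is \cref{idadm}. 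For (ii)$\Leftrightarrow$(iii), I would unwind the definition of admissibility, $\sigma_x^{-1}\sigma_{h(x)} \in H$ for all $x$, using the right-linear formulas $\sigma_x = \text{(translation by }\phi_{e,f}(x)) \circ \overline{\psi_{e,f}}$: invariance of $H$ under $\overline{\psi_{e,f}}$ encodes that $\sigma_x$ normalizes the coset structure, and invariance under $\overline{\phi_{e,f}}$ encodes that the map $x \mapsto \phi_{e,f}(x)$ respects it, which together are equivalent to the admissibility condition once one expresses $h(x)$ for $h \in H \leq \dis(X)$ acting by translation. This is the computational heart of the argument; everything else is bookkeeping via the results already cited. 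I would then conclude that the two assignments $\alpha \mapsto \dis_\alpha$ and $I \mapsto \mathcal{O}_I$ are mutually inverse bijections, and moreover order-preserving, completing the proof.
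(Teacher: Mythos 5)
Your proposal is correct and follows the same overall route as the paper: both arguments pass through the right-linear representation of $X$ over $\dis(X)$ (\cref{secondp}, \cref{raprightlinear}) and the classification of congruences by normal $\phi$- and $\psi$-invariant subgroups (\cref{congrp}, \cref{corrgr}), and both use regularity to see that distinct subgroups of $\dis(X)$ yield distinct orbit partitions. The one genuine difference is in how the subgroup attached to a congruence is recognized as an ideal of the left brace $\mathcal{G}(X)$: the paper invokes the external results \cite[Lemma 2.1]{bachiller2015family} and \cite[Theorem 1]{rump2007braces}, whereas you route entirely through \cref{idadm}, proving that for a normal subgroup of $\mathcal{G}(X)$ contained in $\dis(X)$ the conditions ``ideal'', ``admissible'' and ``normal and invariant under $\overline{\phi_{e,f}}$ and $\overline{\psi_{e,f}}$'' coincide; the last equivalence is indeed a direct computation once one writes $\sigma_x$ as the left translation by $\phi(x)$ composed with $\psi$, and it does go through because $\phi$ is bijective and $X$ is finite. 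This makes your argument more self-contained, at the price of carrying out the admissibility computation explicitly, and it packages the bijection more transparently as the mutually inverse assignments $\alpha\mapsto\dis_\alpha$ and $I\mapsto\mathcal{O}_I$, the key verification being $\dis_\alpha=T_H$ exactly as you describe. One small point to make explicit in a write-up: a subgroup of $\dis(X)$ that is normal in $\mathcal{G}(X)\cong\dis(X)\rtimes\langle\psi\rangle$ is automatically $\overline{\psi_{e,f}}$-invariant, so the substantive content of your coherence step is the equivalence of admissibility with $\overline{\phi_{e,f}}$-invariance.
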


\begin{proof}
    By \cref{secondp} and \cref{raprightlinear}, $X$ can be identified with a cycle set right linear over a group $(G,\circ)$ and given by $x\cdotp y:=\phi(x)\circ \psi(y)$ for all $x,y\in G$, for a suitable maps $\phi$ and $\psi$. By \cref{congrp} (using the same notation) the congruences of $X$ are in one-to-one correspondence with normal subgroups $H$ of $G$ that are invariant under $\phi$ and $\psi$ and the correspondence is given by $H\mapsto \mathcal{B}_H$. So the elements of $\mathcal{B}_H $ correspond to the orbits respect to the action of the subgroup $T_H$ of $\dis(X)$ given by the translations by the elements of $H$. Therefore, since $X$ is irretractable, by \cite[Lemma 2.1]{bachiller2015family} and \cite[Theorem 1]{rump2007braces} $T_H$ is an ideal of $\mathcal{G}(X)$. Therefore, every congruence comes from a suitable ideal of $\mathcal{G}(X)$ contained in $\dis(X)$. Since $\dis(X)$ acts by left translation with respect to the operation $\circ$, different ideals contained in $\dis(X)$ give rise to different congruences, hence the statement follows.
\end{proof}


The following result describes the left brace structure of the permutation group $\mathcal{G}(X)$ of an irretractable cycle set with regular displacement group of prime-power size.

\begin{prop}\label{permx}
    Let $X$ be a finite irretractable cycle set with regular displacement group and $e,f$ be elements of $X$. Then, the permutation group $\mathcal{G}(X)$ is isomorphic to $\dis(X)\rtimes <\psi_{e,f}>$. 
\end{prop}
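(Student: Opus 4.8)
The plan is to exploit the right-linear representation from \cref{raprightlinear} together with the general identity $\mathcal{G}(X)=\dis(X)\langle\sigma_x\rangle$ recorded at the end of Section~1. First I would fix $e,f\in X$ and invoke \cref{secondp} and \cref{raprightlinear} to identify $X$ with a cycle set right linear over $G:=\dis(X)$ with operation $x\cdotp y=\phi(x)\circ\psi(y)$, where (after transporting along the isomorphism $\rho_{e,f}$) $\phi=\overline{\phi_{e,f}}$ and $\psi=\overline{\psi_{e,f}}\in\Aut(\dis(X))$, and $\dis(X)$ acts on $X$ by left translations with respect to $\circ$. Under this identification $\sigma_x$ is the map $y\mapsto\phi(x)\circ\psi(y)$, which is the composite of $\psi$ followed by left translation by $\phi(x)$; in particular every $\sigma_x$ lies in the subgroup $\dis(X)\rtimes\langle\psi\rangle$ of $\Sym(X)$, where $\dis(X)$ is embedded as the group of left translations and $\psi$ acts on it by conjugation exactly as the automorphism $\psi$.

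Next I would prove the reverse containment and that the product is semidirect. Since $\dis(X)\le\mathcal{G}(X)$ and at least one $\sigma_x$ maps into $\dis(X)\rtimes\langle\psi\rangle$ with nontrivial $\psi$-component (indeed choosing $x=\phi^{-1}(0_G)$ gives $\sigma_x=\psi$ as a permutation of $X$, by the computation already used in the proof of \cref{primap}), the generators $\sigma_x$ of $\mathcal{G}(X)$ all lie in $\dis(X)\rtimes\langle\psi\rangle$, hence $\mathcal{G}(X)\le\dis(X)\rtimes\langle\psi\rangle$. Combined with the previous paragraph, $\mathcal{G}(X)=\dis(X)\langle\psi\rangle$ inside $\Sym(X)$. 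To see the sum is direct, note $\dis(X)$ acts regularly on $X$, so the only element of $\dis(X)$ (as left translations) fixing, say, the neutral element of $(G,\circ)$ is the identity; since a nontrivial power $\psi^k$ that agreed with a translation $t_g$ would force $g=\psi^k(0_G)=0_G$ and hence $\psi^k=\id$, the intersection $\dis(X)\cap\langle\psi\rangle$ is trivial, and normality of $\dis(X)$ in $\mathcal{G}(X)$ (it is admissible, being the displacement group) gives the internal semidirect product $\mathcal{G}(X)\cong\dis(X)\rtimes\langle\psi_{e,f}\rangle$, with $\psi_{e,f}$ acting by the automorphism $\overline{\psi_{e,f}}$.

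The main obstacle I anticipate is purely bookkeeping: keeping straight the three-way identification between $(X,\circ_{e,f})$, its group of left translations inside $\Sym(X)$, and $\dis(X)$ via $\rho_{e,f}$, and checking that under this identification the conjugation action of $\psi$ on the translation group matches the automorphism $\overline{\psi_{e,f}}$ of $\dis(X)$ — this is the content one must verify carefully but it is a direct unwinding of the formulas in \cref{raprightlinear} and the relation $\psi(g\circ h)=\psi(g)\circ\psi(h)$. Everything else (regularity giving semiregularity of the translation action, the $\psi$-component of a generator being computed from $\phi^{-1}(0_G)$) is the same standard calculation already invoked in the proofs of \cref{primap} and \cref{congrp}, so I would cite those rather than repeat them.
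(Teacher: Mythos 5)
Your proposal is correct and follows exactly the route the paper intends: it invokes \cref{secondp} and \cref{raprightlinear} to write $\sigma_x$ as a left translation composed with $\psi_{e,f}$, and then carries out the ``standard calculation'' (generators lie in $\dis(X)\langle\psi\rangle$, $\sigma_{\phi^{-1}(0)}=\psi$ gives the reverse containment, regularity gives trivial intersection, normality of $\dis(X)$ gives the semidirect product) that the paper leaves implicit. No gaps; you have simply written out the details the authors omit.
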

\begin{proof}
    It follows by \cref{secondp} and  \cref{raprightlinear}, toegether with a standard calculation.
\end{proof}


\begin{cor}\label{permx2}
    Let $p$ be a prime. Let $X$ be an irretractable cycle set with regular displacement group of size $p^k$. Then, $\mathcal{G}(X)$, as a left brace, is isomorphic to the semidirect product of $B_p$ and $C_n$, where $B_p$ is the $p$-Sylow subgroup of $(\mathcal{G}(X),+)$ and $C_n$ is a trivial cyclic left brace and $gcd(n,p)=1$. 
\end{cor}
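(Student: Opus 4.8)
The plan is to reduce the statement to Proposition~\ref{struttbr}, so that the only genuine task is to check that the Sylow $p$-subgroup $B_p$ of $(\mathcal{G}(X),+)$ is a normal subgroup of $(\mathcal{G}(X),\circ)$; once this is in place the structure theorem for the permutation brace supplies the rest.

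First I would collect the elementary facts. Since $\dis(X)$ acts regularly on a set of size $p^k$, it has order $p^k$; and since $X$ is irretractable with transitive displacement group it is indecomposable (equivalently, $X$ is latin by \cref{risureg}, hence indecomposable by \cref{prel3}). The subgroup $\dis(X)=\dis_{1_X}$ is admissible, so by \cref{idadm} it is an ideal of the left brace $\mathcal{G}(X)$; in particular it is a \emph{normal} $p$-subgroup of $(\mathcal{G}(X),\circ)$. Finally, by \cref{permx} (or simply because $\mathcal{G}(X)/\dis(X)$ is cyclic, as recalled in Section~1) the quotient group $(\mathcal{G}(X),\circ)/\dis(X)$ is cyclic.

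Next comes the Sylow argument. Being a left ideal, $B_p$ is at the same time a subgroup of $(\mathcal{G}(X),+)$ and of $(\mathcal{G}(X),\circ)$ (this is the fact recalled just before \cref{preparteo}), and its order is the full $p$-part of $|\mathcal{G}(X)|$, so $B_p$ is a Sylow $p$-subgroup of the finite group $(\mathcal{G}(X),\circ)$. As $\dis(X)$ is a normal $p$-subgroup it is contained in $B_p$, and then $B_p/\dis(X)$ is the Sylow $p$-subgroup of the cyclic group $(\mathcal{G}(X),\circ)/\dis(X)$. A cyclic group has a unique, hence normal, Sylow $p$-subgroup, so $B_p/\dis(X)$ is normal, and therefore so is its preimage $B_p$ in $(\mathcal{G}(X),\circ)$.

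With $B_p\trianglelefteq(\mathcal{G}(X),\circ)$ established, I would apply \cref{struttbr}: the permutation brace decomposes as $\mathcal{G}(X)\cong B_p\rtimes_\alpha C$, where $C=B_{p_2}+\cdots+B_{p_r}$ is a trivial left brace whose additive group is cyclic. Since $|C|=|\mathcal{G}(X)|/|B_p|$ is exactly the $p'$-part of $|\mathcal{G}(X)|$, putting $n:=|C|$ gives $\gcd(n,p)=1$ and $C\cong C_n$, the trivial cyclic left brace of order $n$; this is precisely the claim. The only point requiring a moment's care is the identification of the additive Sylow $p$-subgroup $B_p$ with a Sylow $p$-subgroup of the multiplicative group, so that the multiplicative Sylow theory actually applies to the object defined via $(+)$; but this is guaranteed by the brace fact cited above, so no real obstacle remains — the proof is essentially bookkeeping layered on top of Propositions~\ref{idadm}, \ref{permx} and \ref{struttbr}.
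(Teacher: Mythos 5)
Your proposal is correct and follows essentially the same route as the paper: both reduce the statement to \cref{struttbr} (via \cref{secondp}/\cref{permx}), whose only nontrivial hypothesis is the normality of $B_p$ in $(\mathcal{G}(X),\circ)$. The paper leaves that normality check implicit, whereas you supply it cleanly — $\dis(X)$ is an ideal (hence a normal $p$-subgroup contained in $B_p$) with cyclic quotient $\mathcal{G}(X)/\dis(X)$, so $B_p$ is the preimage of the unique Sylow $p$-subgroup of that quotient and is therefore normal — which is exactly the bookkeeping the corollary needs.
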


\begin{proof}
    It follows by \cref{secondp}, \cref{permx} and \cref{struttbr}.
\end{proof}

As a final remark of this section, note that, since an abelian transitive subgroup is regular, an irretractabe cycle set $X$ with abelian displacement group $\dis(X)$ is latin if and only if $\dis(X)$ acts transitively on $X$. Therefore, our theory applies to all cycle sets with regular and abelian displacement group.

\section{Simple cycle sets}

\subsection{Simple left quasigroups}

Let us turn our attention to simple left quasigroups.
    \begin{defin}
    Let $X$ be an indecomposable left quasigroup. Then, $X$ is \emph{simple} if the only congruences of $X$ are $\sim_{0_X}$ and $\sim_{1_X}$. 
\end{defin}

\begin{cor}\label{carattsimple2}
    Let $X$ be an indecomposable and irretractable left quasigroup. Then, $X$ is simple if and only if $\dis(X)$ is the smallest nonzero admissible subgroup. Moreover, $\dis(X)$ acts transitively on $X$.
\end{cor}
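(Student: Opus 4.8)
The plan is to use the Galois connection between congruences and admissible subgroups recalled in Section 1. Since $X$ is indecomposable and irretractable, Proposition~\ref{prel3} and the preceding discussion apply: congruences of $X$ correspond (via $\alpha\mapsto\mathcal{G}^\alpha$ and $H\mapsto\mathcal{O}_H$) to admissible subgroups of $\mathcal{G}(X)$, and the key point is that a congruence $\alpha$ satisfies $\alpha\subseteq\sim_\sigma=0_X$ precisely when $\dis_\alpha$ is trivial. Because $X$ is irretractable, $\sim_\sigma=0_X$, so any nonzero congruence $\alpha$ has $\dis_\alpha\neq 1$; and since $\dis_\alpha\leq\mathcal{G}^\alpha$ is admissible, a nonzero congruence forces the existence of a nonzero admissible subgroup below $\mathcal{G}^\alpha$. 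First I would show: if $X$ is simple, then $\dis(X)=\dis_{1_X}$ is contained in every nonzero admissible subgroup. Take a nonzero admissible $H$; then $\mathcal{O}_H$ is a congruence, and simplicity gives $\mathcal{O}_H=0_X$ or $\mathcal{O}_H=1_X$. The former would mean $H$ is trivial (its orbits are singletons, so $H\leq\bigcap_x\mathcal{G}(X)_x=1$ by indecomposability), contradicting $H\neq 1$; hence $\mathcal{O}_H=1_X$, i.e. $H$ is transitive. Now I need $\dis(X)\leq H$: since $H$ is transitive and normal, for any $x,y$ there is $h\in H$ with $h(x)=y$, and admissibility gives $\sigma_x\sigma_{h(x)}^{-1}=\sigma_x\sigma_y^{-1}\in H$; as $\dis(X)$ is generated by such elements together with their conjugates and $H$ is normal, $\dis(X)\leq H$. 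Combined with the fact that $\dis(X)$ is itself admissible and nonzero (it is nontrivial because $X$ is not a one-point set and $X$ is irretractable, so not all $\sigma_x$ coincide), this shows $\dis(X)$ is the smallest nonzero admissible subgroup.

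Conversely, suppose $\dis(X)$ is the smallest nonzero admissible subgroup. Let $\alpha$ be a congruence with $\alpha\neq 0_X$. Then $\dis_\alpha$ is admissible; I claim it is nonzero. Indeed if $\dis_\alpha=1$ then $\alpha\subseteq\sim_\sigma=0_X$ (irretractability), contradicting $\alpha\neq 0_X$. So $\dis_\alpha$ is a nonzero admissible subgroup, whence $\dis(X)\leq\dis_\alpha$. But $\dis_\alpha\leq\mathcal{G}^\alpha$, so $\dis(X)\leq\mathcal{G}^\alpha$, meaning every element of $\dis(X)$ moves points only within $\alpha$-classes. Since $\dis(X)$ is transitive (Proposition~\ref{prel3} — wait, that needs latin; instead I use that $\dis(X)$ transitive follows from $X$ indecomposable irretractable? no) — here I instead argue: $\mathcal{G}(X)=\dis(X)\langle\sigma_x\rangle$, and $\mathcal{O}_{\dis(X)}\leq\alpha$; but I want $\alpha=1_X$, so I need $\dis(X)$ transitive. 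This is where I expect the main obstacle to lie: concluding transitivity of $\dis(X)$. The resolution is that $\mathcal{O}_{\dis(X)}$ is itself a congruence (as $\dis(X)$ is admissible), and $\mathcal{O}_{\dis(X)}\neq 0_X$ forces, by the same trick, $\dis_{\mathcal{O}_{\dis(X)}}\geq\dis(X)$; iterating does not immediately help, so instead I observe that the quotient $X/\dis(X)$ has trivial displacement group, hence $\sim_\sigma$ on it is everything, i.e. $X/\dis(X)$ is a "trivial" left quasigroup on which all $\sigma$ agree — and on such a structure, since $\mathcal{G}(X/\dis(X))$ is cyclic generated by a single $\sigma$, indecomposability of $X/\dis(X)$ forces that one permutation to be a single cycle. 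Then $\dis(X/\dis(X))=1$, so $\dis(X)$ already equals $\mathcal{G}^{\mathcal{O}_{\dis(X)}}$'s relevant part, and in fact $\mathcal{O}_{\dis(X)}$ must be $1_X$ because otherwise it is a proper nonzero congruence whose associated $\dis$ contains $\dis(X)$ — making $\mathcal{O}_{\dis(X)}$ and $\alpha$ coincide and be proper, still not a contradiction by itself.

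To make this airtight I would instead invoke the cleanest route: since $\dis(X)\leq\dis_\alpha\leq\mathcal{G}^\alpha$ for the given nonzero $\alpha$, and the blocks of $\mathcal{O}_{\mathcal{G}^\alpha}$ are contained in the blocks of $\alpha$ which are unions of $\dis(X)$-orbits, it suffices to show $\dis(X)$ is transitive, i.e. $\mathcal{O}_{\dis(X)}=1_X$. Now $\mathcal{O}_{\dis(X)}$ is a congruence; if it were $0_X$, then $\dis(X)$ is semiregular, but also every nonzero admissible subgroup contains $\dis(X)$, so $\dis(X)$ is the unique minimal one and in particular any nonzero normal admissible subgroup meets every point-stabiliser trivially — combined with $\mathcal{G}(X)=\dis(X)\langle\sigma_e\rangle$ and indecomposability this forces $\dis(X)$ transitive after all, a contradiction with semiregularity unless $X$ is a point. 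Hence $\mathcal{O}_{\dis(X)}\neq 0_X$; since it is a congruence and $\dis(X)$ is the smallest nonzero admissible subgroup while $\mathcal{G}^{\mathcal{O}_{\dis(X)}}\supseteq\dis(X)$, minimality plus the correspondence force $\mathcal{O}_{\dis(X)}=1_X$. Therefore $\dis(X)$ is transitive, so the unique block of $\alpha$ containing a $\dis(X)$-orbit is all of $X$, giving $\alpha=1_X$. This proves simplicity, and the transitivity of $\dis(X)$ is recorded as the "moreover" clause. The one genuinely delicate point — transitivity of $\dis(X)$ from minimality — is handled exactly as above by playing indecomposability of $X$ against the semiregular alternative.
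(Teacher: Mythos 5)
Your forward implication (simple $\Rightarrow$ $\dis(X)$ is the smallest nonzero admissible subgroup and is transitive) is correct and is essentially the paper's argument: for a nonzero admissible $H$ the congruence $\mathcal{O}_H$ cannot be $0_X$ (else $H$ fixes every point and is trivial), so it is $1_X$ and $H$ is transitive, and transitivity plus normality plus admissibility give $\dis(X)\leq H$.

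The converse is where there is a genuine gap. You try to deduce simplicity from minimality of $\dis(X)$ \emph{alone}, which forces you to first prove that $\dis(X)$ is transitive; at the decisive moment you assert that ``minimality plus the correspondence force $\mathcal{O}_{\dis(X)}=1_X$'' with no argument. Nothing in the Galois connection excludes a situation where $\dis(X)$ is the unique minimal nonzero admissible subgroup while $\mathcal{O}_{\dis(X)}$ is a proper nonzero congruence (in which case $X$ is not simple and the literal converse fails): the quotient $X/\dis(X)$ would just be a nontrivial permutational left quasigroup, and every nonzero admissible subgroup could still contain $\dis(X)$. Your own detour through $X/\dis(X)$ having cyclic permutation group correctly observes that no contradiction arises, and the ``semiregular alternative'' is also off: if all orbits of $\dis(X)$ were singletons then $\dis(X)$ would be trivial (not merely semiregular), which is excluded by ``nonzero'' --- this only yields $\mathcal{O}_{\dis(X)}\neq 0_X$, not $\mathcal{O}_{\dis(X)}=1_X$. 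The paper sidesteps the issue: in its converse it assumes that $\dis(X)$ is the smallest nonzero admissible subgroup \emph{and acts transitively} (i.e.\ the ``Moreover'' clause is part of the characterization, as its later use in the simplicity criterion for cycle sets confirms), and then for any congruence $\alpha\neq 0_X$ irretractability gives $1\neq\dis_\alpha\leq\mathcal{G}^\alpha$, minimality gives $\dis(X)\leq\mathcal{G}^\alpha$, and transitivity of $\dis(X)$ forces $\alpha=1_X$. Your argument becomes correct once you add transitivity to the hypothesis of the converse; this costs nothing, since the forward direction delivers it.
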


\begin{proof}
    Suppose that $X$ is simple. Clearly, $\dis(X)$ is a nonzero admissible subgroup and by the theory after \cref{admissiblesub} it acts transitively on $X$. For the same reason, every admissible subgroup $H$ must acts transitively on $X$ and, by the definition of admissible subgroup, $\dis(X)$ is contained in $H$, hence $\dis(X)$ is the smallest nonzero admissible subgroup of $\mathcal{G}(X)$.\\
    Conversely, suppose that $\dis(X)$ is the smallest nonzero admissible subgroup that acts transitively on $X$. Then, this implies the transitivity of $H$ on $X$, hence simplicity of $X$ follows by \cref{fattorizzleft}.
\end{proof}

From now on, a cycle set will be called simple if its underlying left quasigroup is simple. As it is well-known that if $X$ is an indecomposable cycle set then the ideal $\mathcal{G}(X)^{2}$ of $\mathcal{G}(X)$ coincides with $\dis(X)$ (see \cite[Lemma 11]{castelli2023indecomposable}), we can use our results to recover a characterization of simple cycle sets. Since the unique simple decomposable cycle set has size $2$, we decide to add the condition $|X|>2$ and isolate this easy case.

\begin{cor}\label{carattsempl}
    Let $X$ be a finite cycle set and suppose that $|X|>2$. Then, $X$ is simple if and only if $X$ is the indecomposable trivial cycle set of prime size $p$ or $X$ is irretractable and $\mathcal{G}(X)^2$ is a minimal ideal of $\mathcal{G}(X)$ that acts transitively on $X$.
\end{cor}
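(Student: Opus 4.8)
The plan is to reduce \cref{carattsempl} to \cref{carattsimple2} together with the basic structural facts about cycle sets collected earlier, treating the two cases of the dichotomy separately. First I would dispose of the ``if'' direction: if $X$ is the indecomposable trivial cycle set of prime size $p$, then its only congruences are $0_X$ and $1_X$ because $|X|=p$ is prime and congruences induce partitions of $X$ into blocks of equal size (as $\mathcal{G}(X)$ is transitive), so $X$ is simple. If instead $X$ is irretractable and $\mathcal{G}(X)^2$ is a minimal ideal acting transitively on $X$, I would invoke \cite[Lemma 11]{castelli2023indecomposable} to identify $\mathcal{G}(X)^2=\dis(X)$, then use \cref{idadm} to see that the ideals of $\mathcal{G}(X)$ contained in (indeed, below and including) $\dis(X)$ correspond to admissible subgroups; minimality of $\dis(X)$ as a transitive ideal, combined with \cref{carattsimple2}, yields simplicity. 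One must be slightly careful: \cref{carattsimple2} asks for $\dis(X)$ to be the smallest \emph{nonzero admissible} subgroup, whereas the hypothesis gives minimality among transitive \emph{ideals}; the bridge is that every admissible subgroup of an indecomposable cycle set is an ideal (by \cref{idadm} in the finite case) and every nonzero admissible subgroup of a simple-candidate must be shown transitive — this I would extract from the Galois-connection machinery after \cref{admissiblesub}, namely that a non-transitive admissible subgroup $H$ produces the proper nontrivial congruence $\mathcal{O}_H$.

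For the ``only if'' direction, suppose $X$ is a finite simple cycle set with $|X|>2$. By \cref{carattsimple2} applied to the underlying left quasigroup (which is indecomposable since $X$ is simple), either $X$ is irretractable or it is not. If $X$ is irretractable, \cref{carattsimple2} gives that $\dis(X)$ is the smallest nonzero admissible subgroup and acts transitively; by \cite[Lemma 11]{castelli2023indecomposable} $\dis(X)=\mathcal{G}(X)^2$, and by \cref{idadm} this smallest nonzero admissible subgroup is a minimal ideal of $\mathcal{G}(X)$ — minimal because any strictly smaller nonzero ideal would again be admissible (\cref{idadm}), contradicting minimality. So $\mathcal{G}(X)^2$ is a minimal ideal acting transitively, as claimed. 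If $X$ is not irretractable, then $\sim_\sigma\neq 0_X$; since $\sim_\sigma$ need not itself be a congruence I would instead argue via \cref{fattorizzleft} and the retraction: the retract $\mathrm{Ret}(X)$ is a nontrivial epimorphic image, so by simplicity it must be a singleton, forcing $\sigma_x=\sigma_y$ for all $x,y$, i.e. $X$ is a trivial cycle set provided by a single permutation $\gamma$. Simplicity and $|X|>2$ (hence $|X|\geq 3$) then force $\gamma$ to be a $|X|$-cycle and $|X|$ to be prime — a non-prime $|X|$ or a non-transitive $\gamma$ would give a proper nontrivial block system hence congruence on the trivial cycle set. This yields the indecomposable trivial cycle set of prime size $p$.

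I expect the main obstacle to be the careful handling of the \emph{retractable} branch, since $\sim_\sigma$ is not a congruence in general (as the excerpt explicitly warns and \cref{orbits of a normal}'s example shows), so one cannot directly say ``$X/\!\!\sim_\sigma$ is a proper quotient.'' The clean route is to use the iterated retraction $\mathrm{Ret}^{k}(X)$, which \emph{is} obtained by genuine congruences, and note that for a simple cycle set this stabilizes immediately: either $\mathrm{Ret}(X)=X$ (so $X$ is irretractable) or $\mathrm{Ret}(X)$ is a point, whence every $\sigma_x$ agrees and $X$ is trivial in the sense of the first item of \cref{esbrace}-analogue for cycle sets. A secondary subtlety is confirming that on a trivial cycle set of size $n$ given by a single permutation $\gamma$, the congruences are exactly the $\gamma$-invariant partitions into equal blocks, so simplicity is equivalent to $\gamma$ being an $n$-cycle with $n$ prime; this is the content already used in \cref{classp} and I would simply cite that lemma's proof. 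Everything else is bookkeeping among \cref{carattsimple2}, \cref{idadm}, and \cite[Lemma 11]{castelli2023indecomposable}.
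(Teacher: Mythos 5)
Your proposal is correct and follows the same skeleton as the paper's proof: both directions reduce to \cref{carattsimple2} together with \cref{idadm} and the identification $\mathcal{G}(X)^2=\dis(X)$ from \cite[Lemma 11]{castelli2023indecomposable}. The one place where you genuinely diverge is the retractable branch of the ``only if'' direction: the paper simply cites \cite[Proposition 4.1]{cedo2021constructing} for the fact that a simple cycle set which is not the trivial one of prime size must be irretractable (and \cite[Lemma 1]{cacsp2018} for simplicity of the trivial prime cycle set), whereas you re-derive both facts from scratch via the retraction and a block-size argument. That is a legitimate, self-contained alternative; the only caveat is that your worry about $\sim_\sigma$ failing to be a congruence is moot here --- the paper's warning concerns arbitrary left quasigroups, but for non-degenerate cycle sets $\sim_\sigma$ \emph{is} a congruence (Rump's retraction theorem, which is what makes $\Ret(X)$ a genuine epimorphic image), so your fallback to iterated retractions is unnecessary but harmless. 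Everything else (transitivity of $\mathcal{G}(X)^2$ forcing indecomposability, minimal ideals versus smallest nonzero admissible subgroups via \cref{idadm}, and the observation that a nontrivial congruence of an irretractable cycle set yields a nonzero admissible subgroup below $\dis(X)$) matches the paper's intended bookkeeping.
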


\begin{proof}
    If $X$ is simple and is not the trivial cycle set of prime size, by \cite[Proposition 4.1]{cedo2021constructing} it is irretractable, hence this implication follows by \cref{idadm}, \cref{carattsimple2} and the equality $\mathcal{G}(X)^2=\dis(X)$. Conversely, if $X$ is the trivial indecomposable of prime size, then it is simple by \cite[Lemma 1]{cacsp2018}. Now, suppose that $X$ is irretractable and $\mathcal{G}(X)^2$ is a minimal ideal of $\mathcal{G}(X)$ that acts transitively on $X$. Then $X$ is indecomposable and hence we have $\mathcal{G}(X)^2=\dis(X)$. Therefore, the thesis follows by \cref{idadm} and \cref{carattsimple2}.
\end{proof}
\subsection{Simple cycle sets with nilpotent displacement group}

In this section we provide some limitations on the cardinality of simple cycle sets having nilpotent permutation group. In particular, we show that simple cycle sets with cyclic displacement group admit a very simple description.


\begin{lemma}\label{ris1}
    Let $X$ be an indecomposable cycle set and assume that $\dis(X)$ has a non-trivial normal Hall subgroup $H$. Then, $H$ is an ideal of $\mathcal{G}(X)$.
\end{lemma}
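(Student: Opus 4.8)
The strategy is to show that $H$ is characteristic in $\dis(X)$, deduce normality in $\mathcal{G}(X)$, and then verify that $H$ is invariant under the $\lambda$-action, which by \cref{idadm} is equivalent to being an ideal (in the finite case). First I would recall from \cref{prel3} and the theory after \cref{admissiblesub} that $\dis(X)$ is normal in $\mathcal{G}(X)$ and, being indecomposable, acts transitively on $X$. Since $H$ is a normal Hall subgroup of $\dis(X)$, by the Schur–Zassenhaus theorem it is the \emph{unique} subgroup of its order in $\dis(X)$, hence characteristic in $\dis(X)$. As $\dis(X)\trianglelefteq \mathcal{G}(X)$, any characteristic subgroup of $\dis(X)$ is normal in $\mathcal{G}(X)$; thus $H\trianglelefteq \mathcal{G}(X)$.

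It remains to check that $H$ is an admissible subgroup, i.e. $\sigma_x^{-1}\sigma_{h(x)}\in H$ for all $x\in X$, $h\in H$. Since $H\subseteq \dis(X)$ and $\dis(X)$ is generated by the elements $\sigma_x\sigma_y^{-1}$, one has $\sigma_x^{-1}\sigma_{h(x)}\in \dis(X)$ automatically (indeed $\sigma_{h(x)}\sigma_x^{-1}\in\dis(X)$ and $\dis(X)\trianglelefteq\mathcal{G}(X)$). So the point is to see that this element lies in the Hall subgroup $H$ rather than merely in $\dis(X)$. Here I would use the left brace structure on $\mathcal{G}(X)$: by the computation in the proof of \cref{idadm}, $\sigma_x\circ\sigma_{h(x)}^{-1}=\sigma_x\circ h\circ\sigma_x^{-1}\circ\lambda_{\sigma_x}(h)$. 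Now $\sigma_x\circ h\circ\sigma_x^{-1}\in H$ because $H\trianglelefteq\mathcal{G}(X)$ (and conjugation by $\sigma_x$ in the $\circ$-group is a group automorphism preserving order, so it fixes the unique Hall subgroup $H$ of $\dis(X)$, using that $\dis(X)$ is also normal for $\circ$). For the term $\lambda_{\sigma_x}(h)$: since $\dis(X)=\mathcal{G}(X)^{2}$ is an ideal of the left brace, $\lambda_{\sigma_x}$ restricts to an automorphism of the additive group $(\dis(X),+)$; this automorphism permutes the Sylow subgroups of $(\dis(X),+)$, and because $H$ is the product of a $\pi$-set of Sylow subgroups for the set of primes $\pi$ dividing $|H|$ (as $H$ is Hall normal, hence characteristic), it is preserved. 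Hence $\lambda_{\sigma_x}(h)\in H$, and the product lies in $H$. Therefore $H$ is admissible, and by \cref{idadm} it is an ideal of $\mathcal{G}(X)$.

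\textbf{Main obstacle.} The delicate point is the handling of $\lambda_{\sigma_x}(h)$: one must argue that the $\lambda$-action sends $H$ into $H$. The cleanest route is to note that $H$, being a \emph{characteristic} subgroup of the ideal $\dis(X)$, is itself an ideal of $\mathcal{G}(X)$ by a general fact about left braces (a characteristic subgroup of an ideal is an ideal, since $\lambda_b$ preserves the ideal and every group automorphism of it), which makes the admissibility check automatic via the equivalence in \cref{idadm}. If one prefers a self-contained argument, the Sylow/additive-group decomposition of $\dis(X)$ together with the fact that $\lambda$-automorphisms fix each Sylow subgroup of $(\dis(X),+)$ setwise does the job; I would present whichever is shorter given the conventions already set up in the paper. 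Either way, once $H$ is seen to be both normal in $\mathcal{G}(X)$ and $\lambda$-invariant, \cref{idadm} gives the conclusion.
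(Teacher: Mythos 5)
Your proof is correct and follows essentially the same route as the paper: the paper likewise observes that $H$, being a normal Hall subgroup, is characteristic in both $(\dis(X),+)$ and $(\dis(X),\circ)$ and hence an ideal of $\mathcal{G}(X)$, citing a result of Bachiller for the first step where you give a self-contained Sylow/uniqueness argument. Your extra detour through the admissibility criterion of \cref{idadm} is harmless but not needed once characteristicity in both group structures is in hand (and note that, as you suspect, $\lambda_b$ restricted to an ideal is in general only an automorphism of its \emph{additive} group, so the additive Sylow decomposition is the right way to see $\lambda$-invariance).
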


\begin{proof}
    If $H$ is a non-trivial normal Hall subgroup of $\dis(X)$, then by \cite[Proposition 4.2.1]{bachiller2016study} it is an ideal of $\dis(X)$. Moreover, $H$ is a characteristic subgroup of $(\dis(X),+)$ and $(\dis(X),\circ)$, therefore it is an ideal of $\mathcal{G}(X)$. 
\end{proof}

\begin{cor}\label{ris2}
    Let $X$ be a simple cycle set with nilpotent displacement group. Then, $|X|=p^k$ for a prime number $p$ and a natural number $k$.
\end{cor}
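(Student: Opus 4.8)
The plan is to exploit the structure theory already developed, in particular \cref{ris1} together with \cref{carattsempl}. Let $X$ be a simple cycle set with nilpotent displacement group $\dis(X)$. If $|X|\le 2$ the claim is immediate, so assume $|X|>2$; then by \cref{carattsempl} either $X$ is the trivial indecomposable cycle set of prime size (and there is nothing to prove) or $X$ is irretractable with $\mathcal{G}(X)^{2}=\dis(X)$ a minimal ideal of $\mathcal{G}(X)$ acting transitively on $X$. So assume the latter, and suppose for contradiction that $|X|$ is not a prime power. Since $X$ is indecomposable, $\dis(X)$ acts transitively on $X$, so $|X|$ divides $|\dis(X)|$; if $|X|$ had at least two prime divisors, pick a prime $p$ dividing $|X|$ such that the $p$-part of $|\dis(X)|$ is proper, i.e. $\dis(X)$ is not a $p$-group.

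The key step is to produce a non-trivial proper admissible subgroup (equivalently, a proper non-trivial ideal of $\mathcal{G}(X)$ inside $\dis(X)$), contradicting minimality. Since $\dis(X)$ is nilpotent, it is the direct product of its Sylow subgroups; in particular every Sylow subgroup is a non-trivial normal Hall subgroup, and choosing a prime $q\mid|\dis(X)|$ for which the $q$-Sylow $H$ is a proper subgroup (which exists because $|\dis(X)|$ is not a prime power — here one uses that a nilpotent group of non-prime-power order has more than one Sylow subgroup), \cref{ris1} gives that $H$ is an ideal of $\mathcal{G}(X)$. Then $H$ is a non-trivial proper ideal of $\mathcal{G}(X)$ contained in $\dis(X)=\mathcal{G}(X)^{2}$, contradicting the minimality of $\mathcal{G}(X)^{2}$ as an ideal. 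Hence $|\dis(X)|$, and therefore $|X|$, is a prime power $p^{k}$.

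I would organize the write-up as: reduce to the irretractable case via \cref{carattsempl}; observe nilpotence forces $\dis(X)$ to be a direct product of its Sylow subgroups, so if $|\dis(X)|$ were not a prime power it would have a proper non-trivial normal Hall (Sylow) subgroup $H$; apply \cref{ris1} to get that $H$ is an ideal of $\mathcal{G}(X)$; note $H\le\dis(X)=\mathcal{G}(X)^{2}$ and $1\ne H\ne\mathcal{G}(X)^{2}$ contradicts minimality; conclude $|\dis(X)|=p^{k}$ and, since the transitive action of $\dis(X)$ has $|X|$ dividing $|\dis(X)|$, that $|X|=p^{m}$ for some $m\le k$. A small point worth spelling out is that $|X|$ is genuinely a power of the same prime: transitivity gives $|X|\mid|\dis(X)|=p^{k}$, so $|X|=p^{m}$.

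The main obstacle is really just making the Hall/Sylow selection airtight: one must be sure that when $|\dis(X)|$ is not a prime power, the nilpotent group $\dis(X)$ admits a non-trivial \emph{proper} normal Hall subgroup — this is exactly a Sylow subgroup of $\dis(X)$, which is normal by nilpotence and proper precisely because a second prime divides the order — so \cref{ris1} applies. Everything else is bookkeeping with the already-established dictionary between admissible subgroups, ideals of $\mathcal{G}(X)$, and congruences (\cref{idadm}, \cref{carattsimple2}, \cref{carattsempl}).
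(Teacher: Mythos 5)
Your proposal is correct and follows essentially the same route as the paper: nilpotence of $\dis(X)$ makes each Sylow subgroup a normal Hall subgroup, which by \cref{ris1} is an ideal of $\mathcal{G}(X)$, and a second prime divisor would make it proper and non-trivial, contradicting simplicity. The only (cosmetic) difference is that the paper derives the contradiction by passing from the ideal to a non-trivial congruence via \cref{epiid}, whereas you invoke the minimality of $\mathcal{G}(X)^{2}=\dis(X)$ from \cref{carattsempl}; both are valid and rest on the same key lemma.
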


\begin{proof}
    If $p,q$ are distinct prime numbers dividing $|X|$, then the $q$-Sylow of $(\dis(X),\circ)$ is a non-trivial normal Hall subgroup of $(\dis(X),\circ)$. By \cref{ris1} and \cref{epiid} $X$ has a non-trivial congruence, a contradiction.
\end{proof}

Now, we can describe all the simple cycle sets with cyclic displacement group.


\begin{theor}\label{cardpr}
    Let $X$ be a finite simple cycle set. Then, the following statements are equivalent:
    \begin{itemize}
        \item[1)] $\dis(X)$ is cyclic;
         \item[2)] $|\dis(X)|=1$;
         \item[3)] $|X|=2$ or $|X|$ is an indecomposable cycle set of prime size.
    \end{itemize}
\end{theor}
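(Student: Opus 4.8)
The plan is to prove the chain of implications $3)\Rightarrow 2)\Rightarrow 1)\Rightarrow 3)$, of which the first two are almost immediate and the real content lies in $1)\Rightarrow 3)$. First, if $|X|=2$ then $X$ is the trivial cycle set and $\dis(X)=1$; if $|X|$ is an indecomposable cycle set of prime size $p$, then by \cref{classp} it is the trivial cycle set deformed by a $p$-cycle, so $\sigma_x=\sigma_y$ for all $x,y$ and hence $\dis(X)=1$. This gives $3)\Rightarrow 2)$. The implication $2)\Rightarrow 1)$ is trivial since the trivial group is cyclic.

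For $1)\Rightarrow 3)$, assume $\dis(X)$ is cyclic and $|X|>2$; I must show $|X|$ is prime (and then, since a cyclic group is nilpotent, the description above makes $X$ indecomposable of prime size). A cyclic group is nilpotent, so by \cref{ris2} we have $|X|=p^k$ for a prime $p$. If $|X|$ is not prime, then $k\geq 2$. By \cref{carattsempl}, since $|X|>2$, either $X$ is the trivial indecomposable cycle set of prime size --- impossible since $k\geq 2$ --- or $X$ is irretractable and $\dis(X)=\mathcal{G}(X)^2$ is a minimal ideal of $\mathcal{G}(X)$ acting transitively (indeed regularly, since $|\dis(X)|$ divides $|X|=p^k$ and $\dis(X)$ is transitive, forcing $|\dis(X)|=p^k$). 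So $\dis(X)$ is a cyclic $p$-group of order $p^k$ with $k\geq 2$. The key point is that a cyclic group of order $p^k$ with $k\geq 2$ has a unique (hence characteristic) subgroup of order $p$, say $C$; being characteristic in $\dis(X)$, it is characteristic in both $(\dis(X),+)$ and $(\dis(X),\circ)$, and since $\dis(X)$ is an ideal of $\mathcal{G}(X)$, this forces $C$ to be an ideal of $\mathcal{G}(X)$ --- a nonzero proper ideal strictly contained in $\dis(X)=\mathcal{G}(X)^2$, contradicting the minimality of $\mathcal{G}(X)^2$.

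More carefully, I would phrase the last step exactly as in the proof of \cref{ris1}: since $C$ is the unique subgroup of order $p$ in the cyclic group $\dis(X)$, it is characteristic in $(\dis(X),+)$ and in $(\dis(X),\circ)$; as $\dis(X)$ is itself an ideal of $\mathcal{G}(X)$ (it equals $\mathcal{G}(X)^2$), a characteristic subgroup of $\dis(X)$ is again an ideal of $\mathcal{G}(X)$. Then $0\neq C\subsetneq \dis(X)=\mathcal{G}(X)^2$ contradicts that $\mathcal{G}(X)^2$ is a minimal ideal. Therefore $k=1$, i.e. $|X|$ is prime, and irretractability together with \cref{carattsempl} (or \cref{classp}) shows $X$ is an indecomposable cycle set of prime size, establishing $3)$.

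The only mild obstacle is making sure all the pieces of \cref{carattsempl} are invoked correctly in the degenerate cases (the size-$2$ case and the prime trivial case), and checking that "minimal ideal" in \cref{carattsempl} indeed refers to minimality among nonzero ideals so that producing a smaller nonzero ideal is genuinely a contradiction; these are bookkeeping matters rather than conceptual ones. A remark worth adding is that the theorem shows simple cycle sets with cyclic --- in fact with any nilpotent but "thin" --- displacement group are essentially trivial, so interesting simple examples require $\dis(X)$ to be non-cyclic, which motivates the subsequent sections.
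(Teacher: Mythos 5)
Your overall architecture ($3)\Rightarrow 2)\Rightarrow 1)\Rightarrow 3)$) matches the paper's, and the two easy implications are handled correctly. But the key step of $1)\Rightarrow 3)$ has a genuine gap. The hypothesis ``$\dis(X)$ is cyclic'' refers to $\dis(X)$ as a permutation group, i.e.\ to the \emph{multiplicative} group $(\dis(X),\circ)$ of the sub-brace $\dis(X)\leq\mathcal{G}(X)$. Your subgroup $C$ is the unique subgroup of order $p$ of this cyclic multiplicative group, and you assert that ``being characteristic in $\dis(X)$, it is characteristic in both $(\dis(X),+)$ and $(\dis(X),\circ)$.'' That is a non sequitur: being characteristic in $(\dis(X),\circ)$ does not even guarantee that $C$ is a \emph{subgroup} of $(\dis(X),+)$, let alone characteristic in it or $\lambda$-invariant, and all of these are needed for $C$ to be an ideal of $\mathcal{G}(X)$. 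The additive and multiplicative groups of a brace can genuinely differ here: there are braces of order $4$ with multiplicative group $\mathbb{Z}/4\mathbb{Z}$ and additive group $(\mathbb{Z}/2\mathbb{Z})^2$, which is precisely why the paper excludes $|X|=4$ ``by an inspection of the small cycle sets'' before running the main argument. The paper closes the gap by invoking Rump's classification of cyclic braces and \cite[Proposition 5.4]{bachiller2016solutions} to conclude that $(\dis(X),+)$ is \emph{also} cyclic, and then uses $H:=p\,\dis(X)$ (defined additively), which is characteristic in $(\dis(X),+)$, a left ideal because the $\lambda$-maps are additive automorphisms, and hence the unique subgroup of its order in the cyclic group $(\dis(X),\circ)$ as well. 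Without some such input your construction of a proper nonzero ideal inside $\mathcal{G}(X)^2$ does not go through.

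Two smaller points. First, your justification of regularity is backwards: for a transitive group $|X|$ divides $|\dis(X)|$, not the other way around; the correct (and standard) argument, stated at the end of Section 4 of the paper, is that a transitive \emph{abelian} permutation group is regular, which applies since cyclic groups are abelian. Second, once the gap above is repaired the rest of your logic is sound: a nonzero ideal of $\mathcal{G}(X)$ properly contained in $\dis(X)=\mathcal{G}(X)^2$ does contradict \cref{carattsempl} (equivalently, by \cref{epiid} it yields a non-trivial congruence, since its orbits have size strictly between $1$ and $|X|$ by semiregularity). So the skeleton is right, but the theorem is not provable by pure group theory on the permutation group $(\dis(X),\circ)$ alone; it genuinely needs the brace-theoretic fact that a cyclic multiplicative group forces (outside order $4$) a cyclic additive group.
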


\begin{proof}
1) $\Rightarrow$ 3)   Suppose that $X$ is a simple cycle set with cyclic displacement group and such that $|X|$ is not a prime number. Then, $X$ must be irretractable and hence $\dis(X)$ acts transitively on $X$, therefore by \cref{ris2} we have $|X|=|\dis(X)|=p^k$ for some prime number $p$. Moreover, by an inspection of the small cycle sets, $|X|\neq 4$. Now, let $m$ be a divisor of $|X|$ such that $|X|/m=p$ for a prime number $p$. By the results containted in \cite{rump2007classification} and \cite[Proposition 5.4]{bachiller2016solutions}, $\dis(X)$ is a left brace with cyclic additive and multiplicative group. Moreover, the set $H:=p \dis(X)$ is an ideal of $\dis(X)$ of size $m$, and is a characteristic subgroup of $(\dis(X),+)$ and $(\dis(X),\circ)$, hence it is an ideal of $\mathcal{G}(X)$. Therefore, $p\dis(X)$ induces a non-trivial congruence on $X$, a contradiction. Then, $|X|$ must be a prime number, and by \cite[Proposition 4.1]{cedo2021constructing} $|X|=2$ or $X$ is indecomposable. If the first case, clearly  $\dis(X)=\{id_X\}$ by a standard calculation. In the second case the conclusion is the same by \cite[Theorem 2.13]{etingof1998set}. 

3) $\Rightarrow$ 2) If $X$ is a simple cycle set of prime size, we have that $\dis(X)$ is the trivial group by \cite[Theorem 2.13]{etingof1998set}

2) $\Rightarrow$ 1) Clear.
\end{proof}

\subsection{Simple cycle sets with regular displacement group}

In this subsection, we provide a description of simple cycle sets with nilpotent regular permutation group. 

\begin{cor}\label{corright}
    Let $X$ be a finite simple cycle set with regular displacement group. Then, $X$ is right linear over $\dis(X)$.
\end{cor}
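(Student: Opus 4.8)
The plan is to derive this statement as an immediate consequence of the machinery already assembled in the excerpt, since the real content has been front-loaded into \cref{carrightlin} (equivalently \cref{secondp}). First I would recall that a finite simple cycle set $X$ with $|X|>2$ is necessarily irretractable — this is \cite[Proposition 4.1]{cedo2021constructing}, already invoked in the proof of \cref{carattsempl} — and if $|X|=2$ the statement is trivial (the two-element cycle set is visibly right linear over $\dis(X)=\{\id_X\}$, which is the cyclic group of order dividing $2$, matching the degenerate case). So without loss of generality $X$ is a finite irretractable cycle set.

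Next, the key point is that a simple cycle set has transitive displacement group, and for simple (hence indecomposable) irretractable cycle sets the displacement group is actually regular. Concretely: by \cref{carattsimple2} (applied through \cref{carattsempl}, or directly since a simple cycle set is indecomposable and irretractable) $\dis(X)$ acts transitively on $X$. It remains to upgrade transitivity to regularity. One clean way: since $X$ is finite, simple, and irretractable, $\dis(X)$ is a minimal admissible subgroup by \cref{carattsimple2}; the point stabilizer $\dis(X)_x$, if nontrivial, would — via the theory around \cref{admissiblesub} and the Galois correspondence with congruences — produce a proper nonzero congruence, contradicting simplicity. Alternatively one notes that for a simple cycle set the equivalence relation $\sim_{\dis(X)}$ of \eqref{sim H} must be $0_X$ or $1_X$; it cannot be $1_X$ (that would force $\dis(X)$ semiregular and then, being transitive, regular already), and if it is $0_X$ then combined with irretractability and finiteness one gets semiregularity of $\dis(X)$ by \cref{risureg}-type reasoning, hence regularity. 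Either route establishes that $\dis(X)$ acts regularly on $X$.

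Once regularity of $\dis(X)$ is in hand, the conclusion is immediate: apply \cref{carrightlin} (= \cref{secondp}), whose condition 2) ``$\dis(X)$ acts regularly on $X$'' is now satisfied, to obtain condition 1) ``$X$ is right linear over a group $G$ with $G\cong\dis(X)$''. By \cref{raprightlinear} one may take $G=\dis(X)$ itself, so $X$ is right linear over $\dis(X)$, as claimed.

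The main obstacle, such as it is, is the regularity step: packaging ``simple $\Rightarrow$ $\dis(X)$ regular'' cleanly from the admissible-subgroup / congruence correspondence without circularity. Everything else is bookkeeping invoking earlier results. I would present the proof in three short lines: reduce to $|X|>2$ and irretractable; cite simplicity to get $\dis(X)$ regular; cite \cref{secondp} (and \cref{raprightlinear}) to conclude. No new computation is required.
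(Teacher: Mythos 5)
Your argument reaches the right conclusion and, where it matters, takes the same route as the paper: establish that $X$ is irretractable and then apply \cref{secondp} (= \cref{carrightlin}). But you have misread the statement in one important respect: the regularity of $\dis(X)$ is a \emph{hypothesis} of the corollary, not a conclusion to be extracted from simplicity, so the step you single out as ``the main obstacle'' is superfluous. Worse, the claim you try to prove there --- that simplicity alone forces $\dis(X)$ to be regular --- is false: simple (hence irretractable) cycle sets of size $p^2$ exist for every prime $p$, yet by \cref{pquad} no latin cycle set of size $p^2$ exists for $p>2$, and by \cref{secondp} an irretractable cycle set with regular displacement group is necessarily latin; so those simple cycle sets have non-regular displacement group. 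The two mechanisms you sketch also do not do what you claim: a nontrivial stabiliser $\dis(X)_x$ does not produce a congruence through the admissible-subgroup correspondence (congruences arise from \emph{orbits} of admissible subgroups, not from stabilisers), and \cref{risureg} runs in the opposite direction (it \emph{assumes} semiregularity to conclude latinity). Once regularity is simply taken as given, your remaining steps --- simple with $|X|>2$ implies irretractable, then \cref{secondp} --- constitute a correct proof, essentially identical to the paper's one-line argument. A residual nit: your $|X|=2$ aside does not typecheck, since a two-element set cannot be right linear over the trivial group (right linearity requires $X=G$ as sets); but that case is vacuous here because the trivial displacement group is not regular on two points.
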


\begin{proof}
The simplicity of $X$ implies that $X$ is irretractable or $|\dis(X)|=1$. Since $\dis(X)$ acts regularly, $X$ must be irretractable, hence the statement follows by \cref{secondp}.
\end{proof}

From now on, in this subsection we indicate by $(p,G,\phi,\psi)$ a $4$-uple such that: 
\begin{itemize}
 \item $p$ is a prime number
        \item $(G,\circ)$ is a group of size $p^k$;
        \item $(\phi,\psi)\in Sym(G)\times Aut(G,+)$ such that \cref{rightlinearcond} holds for all $x,y\in G$.
\end{itemize}

\noindent By \cref{corrgr}, \cref{ris2} and \cref{corright}, we are able to construct all the simple cycle sets with nilpotent regular permutation group. We leave the proof to the reader since it follows by the cited results.

\begin{theor}\label{descrpr}
    Let $(p,G,\phi,\psi)$ be such that every normal subgroup $H$ of $G$ invariant under $\psi$ is not invariant under $\phi$. Then, the pair $(X,\cdotp)$ given by $X:=G$ and 
    $$x\cdotp y:=\phi(x)\circ\psi(y) $$
    for all $x,y\in G$ is a non-trivial simple cycle set with $\dis(X)\cong G$.\\ 
    Conversely, every simple cycle set $X$ having nilpotent regular displacement group can be constructed in this way.
\end{theor}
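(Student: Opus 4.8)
The plan is to prove both directions by directly invoking the structural results already established, since the theorem essentially packages \cref{corright}, \cref{corrgr}, \cref{ris2}, and the right-linear characterisation into a single constructive statement.

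For the forward direction, I would start with a $4$-uple $(p,G,\phi,\psi)$ in which every $\psi$-invariant normal subgroup $H$ of $G$ fails to be $\phi$-invariant. First I would observe that $(X,\bullet)$ with $x\bullet y:=\phi(x)+\psi(y)$ is a left quasigroup (since $\sigma_x=\phi(x)\circ\psi$ is bijective as a composite of the translation by $\phi(x)$ and the automorphism $\psi$), that it is \emph{latin} (because $\phi\in\Sym(G)$, so $\delta_x$ is bijective too), and that it is a cycle set precisely because \cref{rightlinearcond} is assumed. By \cref{prel3} the latin property already gives indecomposability and irretractability, and it gives that $\dis(X)$ is transitive. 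Then I would identify $\dis(X)$: using the right-linear representation, $\dis(X)$ is generated by the $\sigma_x\sigma_y^{-1}$, which are the translations by elements of the form $\phi(x)\circ\phi(y)^-$; since $\phi$ is onto, these translations range over all of $G$, so $\dis(X)\cong G$ acting regularly on $X=G$ by left translations. Now simplicity: by \cref{corattsimple2} (or \cref{carattsempl}) it suffices to show $X$ has no non-trivial congruence, and by \cref{corrgr}(2) the congruences of $X$ correspond bijectively to normal subgroups of $\dis(X)\cong G$ invariant under both $\overline{\psi_{e,f}}$ and $\overline{\phi_{e,f}}$. Under the identification of \cref{raprightlinear} these induced maps are just $\psi$ and $\phi$ themselves (up to conjugation by the fixed isomorphism), so the hypothesis on $(p,G,\phi,\psi)$ says the only such normal subgroups are $0$ and $G$, whence $X$ is simple. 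Non-triviality follows since $|\dis(X)|=p^k\ge p>1$, so $X$ cannot be the trivial cycle set.

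For the converse, let $X$ be a simple cycle set with nilpotent regular displacement group. By \cref{ris2}, $|X|=p^k$ for a prime $p$. By \cref{corright}, $X$ is right linear over $\dis(X)$, so there are a group structure $(G,+):=(\dis(X),+)$ of size $p^k$, a bijection $\phi$ and an automorphism $\psi$ of $G$ with $x\bullet y=\phi(x)+\psi(y)$, and \cref{rightlinearcond} holds automatically because $X$ is a cycle set; hence $(p,G,\phi,\psi)$ is a valid $4$-uple. Simplicity of $X$ forces, via \cref{corrgr}(2) and the identification in \cref{raprightlinear}, that there is no normal subgroup of $G$ other than $0$ and $G$ that is simultaneously $\psi$- and $\phi$-invariant; in particular every $\psi$-invariant normal subgroup $H$ of $G$ either equals $G$ (not a proper subgroup) or fails to be $\phi$-invariant — which is exactly the condition in the statement, once one checks that $G$ itself is trivially $\phi$-invariant and so the relevant quantification is over proper $H$. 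This establishes that $X$ arises as claimed.

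The main obstacle I anticipate is purely bookkeeping: making sure that the induced maps $\overline{\psi_{e,f}}$ and $\overline{\phi_{e,f}}$ on $\dis(X)$ in \cref{corrgr} correspond, under the explicit isomorphism $\rho_{e,f}$ of \cref{raprightlinear}, to the maps $\psi$ and $\phi$ on $G$ that appear in the right-linear presentation used in \cref{descrpr} — i.e. that "normal subgroup of $\dis(X)$ invariant under $\overline{\psi_{e,f}}$ and $\overline{\phi_{e,f}}$" translates cleanly to "normal subgroup of $G$ invariant under $\psi$ and $\phi$". This is implicit in the discussion preceding the theorem (hence the authors' remark that the proof "follows by the cited results"), so the write-up can legitimately be short, but one should state the correspondence explicitly rather than leave it to the reader to reconstruct. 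A secondary, genuinely trivial point to dispatch is the edge case $|X|=2$: it does not occur here because a regular $\dis(X)$ of order $2$ would make $X$ decomposable of size $2$, contradicting that its underlying left quasigroup is latin hence indecomposable by \cref{prel3} — so the output cycle set is always non-trivial of size $p^k$ with $k\ge 1$.
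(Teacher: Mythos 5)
Your proposal is correct and follows essentially the route the paper intends: the paper explicitly leaves this proof to the reader as a consequence of \cref{corrgr}, \cref{ris2} and \cref{corright}, and your write-up is precisely the fleshing-out of those citations (right-linearity and latinness giving irretractability, indecomposability and $\dis(X)\cong G$ acting by translations, then the congruence/invariant-normal-subgroup correspondence giving simplicity, and \cref{ris2} plus \cref{corright} for the converse). Your flagging of the quantifier issue (the hypothesis must be read as ranging over proper non-trivial $H$, since $\{0\}$ and $G$ are always invariant) and of the identification of $\overline{\phi_{e,f}},\overline{\psi_{e,f}}$ with $\phi,\psi$ is appropriate bookkeeping, not a deviation.
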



\noindent If $X$ is a simple cycle set with nilpotent regular displacement group, by \cref{defpri} it can be constructed by a suitable cycle set having a $p$-group permutation group.

\begin{cor}
    Let $Y$ be a simple cycle set having nilpotent regular displacement group. Then, it is a deformation of a right linear cycle set $X$ by a suitable $\alpha\in Aut(X)$ such that $\mathcal{G}(X)$ is a $p$-group for some prime number $p$.
\end{cor}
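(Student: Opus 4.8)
The plan is to combine \cref{descrpr} with \cref{defpri} directly. Let $Y$ be a simple cycle set with nilpotent regular displacement group. By \cref{ris2} we know that $|Y| = p^n$ for a prime $p$ and a natural number $n$. Since $Y$ is simple and $\dis(Y)$ acts regularly, $Y$ is in particular indecomposable and irretractable (simplicity forces irretractability here by \cite[Proposition 4.1]{cedo2021constructing}, the size being bigger than $2$ once $\dis(Y)$ is nontrivial; the degenerate case $|\dis(Y)|=1$ only occurs for $|Y|$ prime, where the statement is immediate with $\alpha$ trivial). So $Y$ is an indecomposable and irretractable cycle set of prime-power size $p^n$.

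Next I would identify the relevant $p$-Sylow and check normality. By \cref{permx2}, the permutation group $\mathcal{G}(Y)$, as a left brace, is the semidirect product $B_p \rtimes C_m$ where $B_p$ is the $p$-Sylow subgroup of $(\mathcal{G}(Y),+)$, $C_m$ is a trivial cyclic left brace with $\gcd(m,p)=1$, and $\alpha$ is the action by $\lambda$. In particular $B_p$ is a normal subgroup of $(\mathcal{G}(Y),\circ)$. Thus the hypothesis of \cref{defpri} is satisfied, and that theorem gives that $Y$ is a deformation $X_\alpha$ of a cycle set $X$ with $\mathcal{G}(X) \cong B_p$, where $\alpha$ has order coprime to $p$ and fixes an element of $X$. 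Since $B_p$ is a $p$-group, $\mathcal{G}(X)$ is a $p$-group.

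It remains to see that $X$ itself is a simple cycle set with nilpotent regular displacement group (so that the word ``right linear'' in the statement is justified via \cref{corright}, and $X$ really falls in the class described by \cref{descrpr}). By \cref{invariant}(2), $\dis(X) = \dis(X_\alpha) = \dis(Y)$, so $\dis(X)$ is nilpotent and acts regularly on $X = Y$ (as a set); hence by \cref{corright} $X$ is right linear over $\dis(X)$. For simplicity: the congruences of $X$ and of $X_\alpha = Y$ coincide, because $\alpha \in \Aut(X,\cdot)$ and deformation by an automorphism does not change the underlying $\mathcal{G}(X)$-action structure that governs congruences — concretely, \cref{invariant}(2) together with \cref{corrgr} (or \cref{idcont}) shows that the lattice of congruences of $X$ and of $X_\alpha$ are both identified with the same lattice of $\overline{\psi}$- and $\overline{\phi}$-invariant normal subgroups of $\dis(X) = \dis(Y)$. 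Since $Y$ is simple, so is $X$. This proves $X$ is a right linear (simple) cycle set with $p$-group permutation group, as claimed.

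I expect the only genuine subtlety to be the last step — transferring simplicity from $Y = X_\alpha$ back to $X$. One must be slightly careful that $\alpha$ is an automorphism of $X$ (which is exactly what \cref{defpri} provides) so that \cref{invariant} applies and the congruence lattices genuinely agree; everything else is a direct assembly of the cited results.
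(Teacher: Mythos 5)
Your derivation of the statement itself is correct and follows the same route as the paper: \cref{ris2} gives $|Y|=p^n$, \cref{permx2} gives normality of the $p$-Sylow $B_p$ in $(\mathcal{G}(Y),\circ)$, \cref{defpri} produces $X$ with $\mathcal{G}(X)\cong B_p$ and $Y=X_\alpha$, and \cref{invariant} transfers irretractability and the regular displacement group to $X$, whence $X$ is right linear. One small ordering slip: you invoke \cref{corright} for the right linearity of $X$ before you have argued that $X$ is simple, but \cref{corright} assumes simplicity; you should instead cite \cref{carrightlin} (equivalently \cref{secondp}), which only needs irretractability and regularity of $\dis(X)$, both of which you already have from \cref{invariant}(1),(2). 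With that substitution the proof of the corollary is complete.

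The last step of your write-up, however, contains a genuine error --- fortunately one that the statement does not require. You claim that $X$ is simple because ``the lattice of congruences of $X$ and of $X_\alpha$ are both identified with the same lattice of $\overline{\psi}$- and $\overline{\phi}$-invariant normal subgroups of $\dis(X)=\dis(Y)$.'' This is not what \cref{corrgr} gives you: the maps $\phi_{e,f}$ and $\psi_{e,f}$ are computed from the operation, and the deformation changes the operation (indeed, by the proof of \cref{invariant}(3), writing $\alpha=t_g\circ\eta$ one gets $\phi'=\eta\phi$ and $\psi'=\eta\psi$), so the two lattices of invariant subgroups need not coincide. Equivalently, an automorphism $\alpha$ of $X$ need not preserve a given congruence of $X$, so a congruence of $X$ is in general not a congruence of $X_\alpha$ and vice versa. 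This is precisely why the paper, immediately after this corollary, states that it is \emph{not known} whether such an $X$ is simple, and why \cref{descrsimple} proves only the one direction (from $X$ simple to $X_\alpha$ simple) via a nontrivial ideal-theoretic argument in the brace $\mathcal{G}(X_\alpha)\cong\mathcal{G}(X)\rtimes\langle\alpha\rangle$; the reverse implication you assert would resolve that open question. Drop the simplicity claim for $X$ (it is not part of the statement) and replace \cref{corright} by \cref{carrightlin}, and your proof is correct.
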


\begin{proof}
    It follows by \cref{ris2}, \cref{corright}, \cref{permx2}, \cref{defpri}, and  \cref{invariant}.
\end{proof}

By the previous corollary, if $Y$ is a simple cycle set  with nilpotent regular displacement group, then it is a deformation of some cycle set $X$ by an automorphism $\alpha$. Unfortunately, we do not know if such an $X$ is simple. Moreover, we do not know if the deformation of a simple cycle set is again simple, but we are able to show this under some additional hypothesis. 

\begin{prop}\label{descrsimple}
    Let $X$ be a simple cycle set having a $p$-group permutation group $\mathcal{G}(X)$. Let $\alpha$ be an automorphism of $X$ having order coprime with $p$ and fixing an element $x\in X$. Then, the deformation $X_{\alpha}$ of $X$ is simple.
\end{prop}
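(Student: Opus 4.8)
The plan is to relate congruences of $X_\alpha$ to congruences of $X$ via the shared displacement group. By \cref{invariant}(2) we have $\dis(X)=\dis(X_\alpha)$, and since $X$ is simple it is either irretractable with $\dis(X)$ transitive, or has prime size; here $\mathcal{G}(X)$ is a nontrivial $p$-group, so $X$ is irretractable and indecomposable, and by \cref{invariant}(1) so is $X_\alpha$. I would first record that $\mathcal{G}(X_\alpha)=\langle \bar\sigma_x\rangle=\langle \alpha\sigma_x\rangle$; using the hypothesis that $\alpha$ has order coprime to $p$ and fixes $x$, together with the fact that $\alpha$ normalizes $\dis(X)$ (it is an automorphism of $X$, hence conjugates $\sigma_y\sigma_z^{-1}$ to $\sigma_{\alpha(y)}\sigma_{\alpha(z)}^{-1}$), one sees $\mathcal{G}(X_\alpha)\le \dis(X)\rtimes\langle\alpha\rangle$ and in fact $\mathcal{G}(X_\alpha)=\dis(X)\langle\bar\sigma_x\rangle$ with $\dis(X)$ the unique $p$-Sylow; this identifies the $p$-Sylow of $(\mathcal{G}(X_\alpha),+)$ with $\dis(X)$.

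Next I would use \cref{carattsimple2}: $X_\alpha$ is simple if and only if $\dis(X_\alpha)=\dis(X)$ is the smallest nonzero admissible subgroup of $\mathcal{G}(X_\alpha)$, and it acts transitively (which it does, being transitive on $X$). So it suffices to show that every nonzero admissible subgroup $N$ of $\mathcal{G}(X_\alpha)$ contains $\dis(X)$. Since $\dis(X)$ is a nilpotent $p$-group, any nonzero normal subgroup $N\trianglelefteq\mathcal{G}(X_\alpha)$ meets $\dis(X)$ nontrivially (the $p$-part of $N$ is normal and lies in the normal $p$-Sylow $\dis(X)$; a nonzero normal subgroup of a $p$-group meeting the Fitting-type subgroup...), so it is enough to argue that the admissible subgroups of $\mathcal{G}(X_\alpha)$ contained in $\dis(X)$ correspond exactly to the admissible subgroups of $\mathcal{G}(X)$ contained in $\dis(X)$, hence to congruences of $X$, of which there are only the trivial ones. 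Concretely: an admissible subgroup $H\le\dis(X)$ of $\mathcal{G}(X_\alpha)$ satisfies $\bar\sigma_y^{-1}\bar\sigma_{h(y)}\in H$, i.e. $\sigma_y^{-1}\alpha^{-1}\alpha\sigma_{h(y)}=\sigma_y^{-1}\sigma_{h(y)}\in H$ for all $y$, which is precisely admissibility of $H$ for $X$; and normality in $\mathcal{G}(X_\alpha)$ versus $\mathcal{G}(X)$ agrees on subgroups of $\dis(X)$ because both groups equal $\dis(X)\langle\sigma_x\rangle$ modulo the action of $\alpha$, and $\alpha$ preserves $H$ since $\mathcal{O}_H$ is then a congruence of $X$ preserved by the automorphism $\alpha$. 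Combining, any nonzero admissible $N$ of $\mathcal{G}(X_\alpha)$ has $N\cap\dis(X)$ a nonzero admissible subgroup of $\mathcal{G}(X)$ inside $\dis(X)$, hence equals $\dis(X)$ by simplicity of $X$, so $\dis(X)\le N$, as required.

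The main obstacle I anticipate is the bookkeeping in the previous paragraph: verifying carefully that "admissible subgroup of $\mathcal{G}(X_\alpha)$ contained in $\dis(X)$" is the same notion as "admissible subgroup of $\mathcal{G}(X)$ contained in $\dis(X)$", in particular that normality is not lost or gained when passing between the two permutation groups, and that an arbitrary nonzero admissible subgroup $N\le\mathcal{G}(X_\alpha)$ really does have $N\cap\dis(X)\neq 1$ and that this intersection is itself admissible. For the last point I would use that $N$, being admissible hence normal, together with the coprimality of $|\alpha|$ and $p=|\dis(X)|$ at the relevant level, forces the "$\alpha$-free part" of $N$ to be controlled; a Schur–Zassenhaus / coprime-action argument shows $N=(N\cap\dis(X))\rtimes(N\cap\langle\text{complement}\rangle)$ and $N\cap\dis(X)$ is normal in $\mathcal{G}(X_\alpha)$, then admissibility of the intersection follows from the identity $\sigma_y^{-1}\sigma_{h(y)}=\bar\sigma_y^{-1}\bar\sigma_{h(y)}$ noted above. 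Once this reduction is in place the result is immediate from \cref{carattsimple2} applied to $X$.
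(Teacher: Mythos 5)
Your route is genuinely different from the paper's: you work entirely with admissible subgroups of the permutation groups and \cref{carattsimple2}, whereas the paper argues with left-brace ideals, using \cite[Lemma 4.5]{dietzel2023indecomposable} to identify $\mathcal{G}(X_\alpha)$ with $\mathcal{G}(X)\rtimes\langle\alpha\rangle$ and then showing, via $J*\mathcal{G}(X_\alpha)\subseteq J\cap\mathcal{G}(X_\alpha)^2$ and $Soc(\mathcal{G}(X_\alpha))=0$, that a nonzero intransitive ideal $J$ would yield a nonzero intransitive ideal $J\cap\dis(X)$ of $\mathcal{G}(X)$, contradicting \cref{carattsempl}. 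Your plan (every nonzero admissible $N\leq\mathcal{G}(X_\alpha)$ meets $\dis(X)$ nontrivially, and $N\cap\dis(X)$ is then a nonzero admissible subgroup of $\mathcal{G}(X)$, hence equals $\dis(X)$ by \cref{carattsimple2}) is viable and more elementary. Two remarks on the first half: the inference ``$\mathcal{G}(X)$ is a nontrivial $p$-group, so $X$ is irretractable'' fails for the trivial cycle set of prime size, where $\mathcal{G}(X)\cong\mathbb{Z}/p\mathbb{Z}$, although there $\Aut(X)=\langle\gamma\rangle$ forces $\alpha=\id$ and the claim is vacuous; and for $N\cap\dis(X_\alpha)\neq 1$ you need neither Sylow theory nor Schur--Zassenhaus: admissibility gives $\bar\sigma_y^{-1}\bar\sigma_{h(y)}=\sigma_y^{-1}\sigma_{h(y)}\in N\cap\dis(X_\alpha)$ for all $h\in N$ and $y\in X$, so if this intersection were trivial every $h\in N$ would satisfy $\sigma_{h(y)}=\sigma_y$ for all $y$, whence $h=\id$ by irretractability of $X_\alpha$.

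The genuine gap is the normality of $H:=N\cap\dis(X)$ in $\mathcal{G}(X)$. The admissibility \emph{condition} for $X$ does follow from the identity $\bar\sigma_y^{-1}\bar\sigma_{h(y)}=\sigma_y^{-1}\sigma_{h(y)}$, but an admissible subgroup must also be normal, and $\mathcal{G}(X)=\dis(X)\langle\sigma_x\rangle$ while $H$ is a priori only normalized by $\dis(X)$ and by $\bar\sigma_x=\alpha\sigma_x$. Your justification that ``$\alpha$ preserves $H$ since $\mathcal{O}_H$ is then a congruence of $X$'' is circular: $\mathcal{O}_H$ is only known to be a congruence of $X$ once $H$ is known to be admissible for $\mathcal{G}(X)$, which is exactly what requires $\alpha$-invariance. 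The gap is fillable, and this is precisely where both hypotheses on $\alpha$ enter: since $\alpha(x)=x$ we have $\alpha\sigma_x\alpha^{-1}=\sigma_{\alpha(x)}=\sigma_x$, so conjugation by $\alpha$ and by $\sigma_x$ commute as automorphisms of $\dis(X)$. Writing $c$, $a$, $s$ for conjugation by $\alpha\sigma_x$, $\alpha$, $\sigma_x$ respectively, $c=as=sa$ preserves $H$, hence so does $c^{|\alpha|}=a^{|\alpha|}s^{|\alpha|}=s^{|\alpha|}$; since $s$ has $p$-power order and $\gcd(|\alpha|,p)=1$, we get $\langle s^{|\alpha|}\rangle=\langle s\rangle$, so $\sigma_x$, and therefore all of $\mathcal{G}(X)$, normalizes $H$. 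With this step inserted your argument closes.
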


\begin{proof}
    Suppose that $X_{\alpha}$ is not a simple cycle set. Then, by \cite[Lemma 4.5]{dietzel2023indecomposable} the left brace $\mathcal{G}(X_{\alpha})$  is isomorphic to $ \mathcal{G}(X)\rtimes <\alpha>$, and by \cref{carattsempl} there exist a nonzero ideal $J$ of $\mathcal{G}(X_{\alpha})$ that does not act transitively on $X_{\alpha}$. Then, the ideal $J\cap \dis(X_{\alpha})$, which is equal to $J\cap \mathcal{G}(X_{\alpha})^2$, contains $J*\mathcal{G}(X)$, therefore if $J\cap \dis(X_{\alpha})=0$ we have $J*\mathcal{G}(X_{\alpha})=0$ and hence $J\subseteq Soc(\mathcal{G}(X_{\alpha}))=0$, but this is not possible because $J$ is nonzero. Hence $J\cap \dis(X_{\alpha})$ is a nonzero ideal of $\mathcal{G}(X_{\alpha})$ contained in $\dis(X_{\alpha})$. Since by \cref{invariant} $\dis(X_{\alpha})=\dis(X)$, we have that $J\cap \dis(X)$ is a nonzero ideal of $\mathcal{G}(X)$ that acts not transitively on $X$, hence by \cref{carattsempl} $X$ is not simple, a contradiction.
\end{proof}

\begin{cor}\label{cossimple}
    Let $(p,G,\phi,\psi)$ be such that $\psi$ has order $p^s$ for some natural number $s$, and every normal subgroup $H$ of $G$ invariant under $\psi$ is not invariant under $\phi$. Moreover, let $\alpha \in Aut(G,\cdotp)$, where $x\cdotp y:=\phi(x)\circ\psi(y)$ for all $x,y\in G$, such that $\alpha$ has coprime order with $p$ and $\alpha(z)=z$ for some $z\in G$. Then, the pair $(X,\bullet)$ given by $X:=G$ and 
    $$x\bullet y:=\alpha(\phi(x)\circ\psi(y)) $$
    for all $x,y\in G$ is a simple cycle set. 
\end{cor}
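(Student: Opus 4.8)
The strategy is to recognise $(X,\bullet)$ as a deformation of a right linear cycle set and then to invoke \cref{descrsimple}. First I would apply \cref{descrpr}: the hypothesis that every normal subgroup of $G$ invariant under $\psi$ fails to be invariant under $\phi$ guarantees that the pair $(X,\cdotp)$ with $X:=G$ and $x\cdotp y:=\phi(x)+\psi(y)$ is a non-trivial simple cycle set with $\dis(X)\cong G$. Since $\alpha\in\Aut(X,\cdotp)$ and $x\bullet y=\alpha(x\cdotp y)$ for all $x,y\in G$, the pair $(X,\bullet)$ is precisely the deformation $X_\alpha$ of $X$ by $\alpha$; in particular it is again a cycle set.

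Next I would check that $\mathcal{G}(X)$ is a $p$-group. Writing $t_g$ for the left translation $z\mapsto g+z$ of $(G,+)$, one has $\sigma_x=t_{\phi(x)}\psi$; since $\phi$ is a bijection of $G$, the group $\mathcal{G}(X)$ contains $\sigma_x\sigma_y^{-1}=t_{\phi(x)-\phi(y)}$ for all $x,y\in G$, hence every translation $t_g$, and therefore also $\psi=t_{\phi(x)}^{-1}\sigma_x$. Thus $\mathcal{G}(X)=T\rtimes\langle\psi\rangle$, where $T:=\setof{t_g}{g\in G}\cong G$ is normal (because $\psi t_g\psi^{-1}=t_{\psi(g)}$) and $T\cap\langle\psi\rangle=1$, since the identity is the only translation fixing $0$. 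Hence $|\mathcal{G}(X)|=|G|\cdot|\psi|$, which is a power of $p$ because $|G|=p^k$ and $\psi$ has order $p^s$ by hypothesis. (Alternatively, this follows from \cref{permx} once one knows that $\psi$ has $p$-power order.)

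Finally, $\alpha$ is an automorphism of the simple cycle set $X$, whose permutation group $\mathcal{G}(X)$ has just been shown to be a $p$-group, $\alpha$ has order coprime with $p$, and $\alpha$ fixes the element $z\in G=X$. So \cref{descrsimple} applies and yields that $X_\alpha=(X,\bullet)$ is simple, as claimed. The only step requiring anything beyond routine bookkeeping is the verification that $\mathcal{G}(X)$ is a $p$-group; this is exactly why the extra assumption that $\psi$ has order $p^s$ is imposed here, even though it plays no role in \cref{descrpr}. Everything else is a direct combination of the cited results.
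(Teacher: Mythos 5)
Your proposal is correct and follows essentially the same route as the paper, whose proof is just the citation chain \cref{descrpr} $\to$ ($\mathcal{G}(X)$ is a $p$-group, via \cref{permx2}) $\to$ \cref{descrsimple}. The only difference is that you verify the $p$-group claim by the explicit computation $\mathcal{G}(X)=T\rtimes\langle\psi\rangle$ instead of citing \cref{permx2}, which is a harmless (indeed clarifying) substitution.
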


\begin{proof}
    It follows by \cref{permx2}, \cref{descrpr} and \cref{descrsimple}.
\end{proof}

\section{Affine cycle sets}

In this section we specialize the results of the previous sections to latin affine cycle sets. In this case, the cycle set can be identified with an abelian group $(X,+)$ and the $\cdotp$ operation is defined by
\begin{equation}\label{eqaffini}
    x\cdotp y:=\phi(x)+\psi(y)+c
\end{equation}
for all $x,y\in X$, where $\phi,\psi\in Aut(X,+)$ and $c\in X$. Recall that we denote such a cycle set by $\aff{X,\phi,\psi,c}$

\smallskip

\noindent The following result, that is a reformulation of \cite[Theorem $4.18$]{bon2019}, allows to detect the class of affine cycle sets by group-theoretic properties.

\begin{cor}\label{secondp2}
    Let $X$ be a finite irretractable cycle set with abelian displacement group $\dis(X)$. Then, $X$ is affine if and only if $\dis(X)$ is a normal subgroup of $\mathcal{TG}(X)$ that acts regularly on $X$.
\end{cor}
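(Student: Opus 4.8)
The plan is to prove the two implications separately, using the right-linear representation machinery developed earlier together with \cref{secondp}, \cref{primap}, and \cref{congrp}. Throughout, recall that by \cref{carrightlin} a finite irretractable cycle set with abelian (hence, by the closing remark of the previous section, regular) displacement group can be written, via the isomorphism $\rho_{e,f}$ of \cref{raprightlinear}, as a cycle set right linear over $\dis(X)$ with operation $x\cdotp y=\phi(x)+\psi(y)$; and $\mathcal{G}(X)\cong\dis(X)\rtimes\langle\psi\rangle$ by \cref{permx}, so that $\mathcal{TG}(X)=\langle\sigma_x,\delta_x\rangle$ contains $\dis(X)$ as the set of left translations of the group $(X,\circ)\cong\dis(X)$.

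First I would prove the forward direction. Suppose $X=\aff{X,\phi,\psi,c}$ is affine with $\phi,\psi\in\Aut(X,+)$. Then $\dis(X)$ is generated by the maps $\sigma_x\sigma_y^{-1}$, which are translations of the abelian group $(X,+)$ (a direct computation: $\sigma_x\sigma_y^{-1}(z)=z+\phi(x)-\phi(y)$), so $\dis(X)\cong(X,+)$ acts regularly. For normality in $\mathcal{TG}(X)$: $\sigma_x$ and $\delta_x$ are affine maps of $(X,+)$ with linear parts $\psi$ and $\phi$ respectively, and conjugating a translation $t_a$ by an affine map with invertible linear part again yields a translation, namely $t_{\psi(a)}$ or $t_{\phi(a)}$; hence the group of translations is normalized by every generator of $\mathcal{TG}(X)$, giving $\dis(X)\trianglelefteq\mathcal{TG}(X)$. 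This part should be essentially a routine computation once the right-linear form is in hand.

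For the converse, assume $\dis(X)$ is regular and normal in $\mathcal{TG}(X)$. By \cref{carrightlin}/\cref{raprightlinear} write $x\cdotp y=\phi(x)+\psi(y)$ over the group $(G,\circ):=\dis(X)$, which is abelian by hypothesis; identify $\dis(X)$ with the translations $\{t_a : a\in G\}$ of $(G,+)$. Normality of $\dis(X)$ in $\mathcal{TG}(X)$ means that conjugation by each $\sigma_x$ and each $\delta_x$ permutes the translations. Since $\sigma_x(y)=\phi(x)+\psi(y)$, conjugating $t_a$ by $\sigma_x$ gives the map $y\mapsto \psi(\psi^{-1}(y-\phi(x))+a)+\phi(x)=y+\psi(a)$, which is indeed a translation for every $a$ — but the point is to run this the other way: normality forces $\psi$ (and, via the $\delta_x$, the bijection $\phi$) to be a group endomorphism of $(G,+)$. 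Concretely, from $\delta_x(y)=\phi(y)+\psi(x)$ one computes that conjugation of $t_a$ by $\delta_x$ is the map $y\mapsto \phi(\phi^{-1}(y-\psi(x))+a)+\psi(x)$; requiring this to be a translation of $(G,+)$ for all $a$ is equivalent to $\phi(u+a)-\phi(u)$ being independent of $u$, i.e. $\phi$ affine; subtracting the value at a fixed point makes $\phi$ a homomorphism up to a constant, and absorbing that constant puts $X$ in the form $\eqref{eqaffini}$. The same argument applied to $\sigma_x$ (or simply invoking $\psi\in\Aut(G,+)$ already from \cref{raprightlinear}) handles $\psi$. Once both $\phi$ and $\psi$ are shown to be group automorphisms of $(X,+)=(G,+)$ modulo translations, $X$ is affine by definition, and the claim follows.

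The main obstacle is the converse direction: translating the single group-theoretic hypothesis ``$\dis(X)\trianglelefteq\mathcal{TG}(X)$'' into the additivity of the \emph{set-map} $\phi$ (which a priori is only a bijection, not a homomorphism). The clean way to handle it is to note that normality of the translation group inside a transformation group of an abelian group forces every element of that transformation group to be an affine map (translation composed with automorphism); applying this to the generators $\delta_x$ pins down $\phi$ up to a constant, and then a change of the constant term $c$ absorbs the discrepancy. I would present that as the key lemma-step and keep the verification that the resulting $\aff{X,\phi,\psi,c}$ really is a cycle set to a one-line appeal to the identity $\phi\psi-\psi\phi=\phi^2$ recorded after \cref{defrightlin}.
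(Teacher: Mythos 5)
Your argument is correct, but it takes a genuinely different route from the paper. The paper's proof of \cref{secondp2} is a two-line citation: \cref{risureg} guarantees that under either hypothesis $X$ is latin (so that $\mathcal{TG}(X)$ is even defined and the latin hypothesis of the quoted result is met), and then the whole equivalence is outsourced to Theorem~4.18 of \cite{bon2019}. You instead reprove the substance of that external theorem in this setting: you use the right-linear representation of \cref{carrightlin} and \cref{raprightlinear} to identify $\dis(X)$ with the translation group of the abelian group $(G,+)$, and you extract the key mechanism explicitly --- normality of the translation group inside a transformation group of an abelian group forces every element of that group to be an affine map, and applying this to the generators $\delta_x$ turns the a priori arbitrary bijection $\phi$ into an automorphism plus a constant, which is then absorbed into $c$. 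This buys a self-contained and more transparent proof at the cost of length; the paper's version buys brevity at the cost of opacity. Two small points you should make explicit: in the forward direction, irretractability is what forces $\phi$ to be injective (hence bijective, $X$ finite), which is needed both to see that $X$ is latin (so $\mathcal{TG}(X)$ exists) and to see that the translations $t_{\phi(x)-\phi(y)}$ exhaust the full translation group, so that $\dis(X)$ really acts regularly rather than merely semiregularly on a subgroup; and in the converse direction you should say at the outset that regularity of $\dis(X)$ plus \cref{risureg} is what makes $X$ latin, so that $\mathcal{TG}(X)$ and the representation of \cref{raprightlinear} are available. Neither is a gap, just a line each to add.
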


\begin{proof}
    It follows by \cref{risureg} and \cite[Theorem $4.18$]{bon2019}.
\end{proof}


As highlited in \cite{bon2019}, there is a relation between affine cycle sets and the first Weyl algebra over a field.

\begin{rem}\label{collweyl}
Let us define the {\it first Weyl algebra} over a field $K$ as $A_1(K)=K[a,b]/\langle ab-ba-1\rangle$.
\begin{enumerate}
    \item    Let $n$ be a natural number, $K$ be a field and $(X,\cdotp)=\aff{K^n,\phi,\psi,c}$ be a latin affine cycle set with underlying abelian group $(K^n,+)$ and with $\psi,\phi$ automorphisms of the canonical vector space $K^n$. As showed in \cite[Section $4$]{bon2019}, in this case \cref{eqcycleset} is equivalent to the relation $\psi\phi^{-1}-\phi^{-1}\psi=id_{X}$, that is also the defining relation of the first Weyl Algebra. Thus we have an $n$-dimensional representation $\rho$ of the first Weyl algebra $A_1(K)$ defined by $\rho:A_1(K)\longrightarrow \End{(K^n,+)}$ with $\rho(a)=\psi$ and $\rho(b)=\phi^{-1}$.
    
 \item   Conversely, if $\rho$ is an $n$-dimensional representation, defined by $\rho:A_1(K)\rightarrow \End{(K^{n},+)}$ with $\rho(a),\rho(b)\in \Aut{(K^{n},+)}$, then $\aff{K^n,\rho(b)^{-1},\rho(a),c}$ gives rise to an affine cycle set for every $c\in K^{n}$. 
\end{enumerate}

Note that, if $(K,+)$ is a field having a prime number $p$ of elements, the dimension of the representation must be a multiple of $p$ \cite[Proposition 4.7]{bon2019}. 
\end{rem}

\subsection{Simple affine cycle sets}
%


The properties of being affine and simple force the displacement group to be elementary abelian.

\begin{prop}\label{elab}
    Let $X$ be a simple affine latin cycle set. Then, $\dis(X)$ is an elementary abelian $p$-group of size $p^k$ for some prime number $p$ dividing $k$.
\end{prop}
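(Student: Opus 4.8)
The plan is to combine the structural results already established for simple cycle sets with regular displacement group together with the affine characterisation of \cref{secondp2} and the Weyl-algebra module structure of \cref{collweyl}. First I would note that since $X$ is simple, affine and latin, \cref{prel3} gives that $\dis(X)$ is transitive; since it is abelian and transitive it is regular (as remarked at the end of Section $5$). Thus $X$ falls under the hypotheses of \cref{ris2}, which forces $|X| = |\dis(X)| = p^k$ for some prime $p$ and some natural number $k$. What remains is to show that the abelian $p$-group $\dis(X)$ is in fact \emph{elementary} abelian, i.e. has exponent $p$, and that $p \mid k$.

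For the elementary abelian part, the key idea is to produce a proper non-trivial ideal of $\mathcal{G}(X)$ contained in $\dis(X)$ unless the exponent is $p$, contradicting simplicity via \cref{idcont} (or \cref{carattsempl}). Concretely, identify $X$ with $\aff{A,\phi,\psi,c}$ where $(A,+) = \dis(X)$ is an abelian $p$-group, $\phi,\psi \in \Aut(A,+)$, and $[\psi,\phi^{-1}] = \id_A$ by \cref{collweyl}. The subgroup $pA = \{pa : a \in A\}$ is a characteristic subgroup of $(A,+)$, hence is invariant under both $\phi$ and $\psi$; by \cref{congrp} (applied through the right-linear representation of \cref{corright}, noting that $\phi,\psi$ are automorphisms here so the ``normal'' condition is automatic) it induces a congruence on $X$. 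If $A$ is not elementary abelian then $pA \neq 0$, and since $A$ is a $p$-group $pA \neq A$ by Nakayama/Frattini; this congruence is therefore non-trivial, contradicting simplicity. Hence $pA = 0$, i.e. $\dis(X)$ is elementary abelian of size $p^k$.

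For the divisibility $p \mid k$, I would invoke the Weyl-algebra representation directly: by \cref{collweyl}, since $(A,+)$ is now an elementary abelian $p$-group, it is a $\mathbb{Z}/p\mathbb{Z}$-vector space carrying a representation $\rho: A_1(\mathbb{Z}/p\mathbb{Z}) \to \End(A,+)$ with $\rho(a) = \psi$, $\rho(b) = \phi^{-1}$ both invertible. The cited \cite[Proposition 4.7]{bon2019} says every finite-dimensional representation of $A_1(\mathbb{F}_p)$ has dimension divisible by $p$ — this is ultimately because in $A_1(\mathbb{F}_p)$ the elements $a^p$ and $b^p$ are central, so taking the trace of the relation $\psi\phi^{-1} - \phi^{-1}\psi = \id$ over any $\mathbb{F}_p$-subrepresentation and using that a commutator has trace $0$ forces $\dim \equiv 0 \pmod p$. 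Thus $k = \dim_{\mathbb{F}_p} A$ is divisible by $p$, completing the proof.

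The main obstacle I anticipate is the bookkeeping in the first step: one must make sure $pA$ genuinely yields a congruence and not merely a block system, which requires checking $pA$ is a \emph{normal} subgroup invariant under \emph{both} $\phi$ and $\psi$ — here this is immediate since $A$ is abelian and $pA$ is characteristic, but the argument needs \cref{secondp2}/\cref{corright} to translate $X$ into its right-linear form first, and one must confirm that $pA \neq A$ (which uses finiteness of the $p$-group, not just that $A$ is a $p$-group in the abstract). A secondary point is simply citing the correct earlier result to conclude ``non-trivial congruence $\Rightarrow$ contradiction''; \cref{carattsempl} or \cref{idcont} both suffice, but one should pick the one whose hypotheses ($X$ irretractable, which follows from latin-ness by \cref{prel3}) are manifestly met.
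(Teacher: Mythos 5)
Your proof is correct and follows essentially the same route as the paper: affine forces $\dis(X)$ abelian, \cref{ris2} forces prime-power size, and the characteristic subgroup $pA$ together with the congruence correspondence (\cref{congrp}/\cref{corrgr}) forces exponent $p$. You additionally spell out the divisibility $p\mid k$ via the $A_1(\mathbb{Z}/p\mathbb{Z})$-representation, which the paper's own proof leaves implicit in \cref{collweyl}; that is the intended argument and your trace-of-a-commutator justification is the standard one.
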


\begin{proof}
  Since $X$ is affine then $\dis(X)$ is abelian. By \cref{ris2} the underlying abelian group is a p-group. Since in a finite abelian group $(G,+)$ the elements of the form $pg$, with $g\in G$, form a characteristic subgroup, the statement follows by \cref{corrgr} and \cref{raprightlinear}.
\end{proof} 

Let $p$ be a prime number. As a main consequence of \cref{collweyl} and \cref{elab}, we obtain that every finite simple affine cycle set corresponds to a finite-dimensional irreducible representations $\rho$ of $A_1(\mathbb{Z}/p\mathbb{Z})$ together with a constant $c\in (\mathbb{Z}/p\mathbb{Z})^n$, where $n:=dim(\rho)$. 

\begin{theor}\label{irrid}
    Let $p$ be a prime number and $\rho$ be an irreducible representation with dimension $n$ of $A_1(\mathbb{Z}/p\mathbb{Z})$. Suppose that $\rho(a)$ and $\rho(b)$ are invertible and set $\phi:=\rho(b)^{-1}$ and $\psi:=\rho(a)$. Then $\aff{(\mathbb{Z}/p\mathbb{Z})^n,\phi,\psi,c}$is a simple latin affine cycle set for every $c\in \mathbb{Z}/p\mathbb{Z}$. Moreover, $\mathcal{G}(X)$ is a $p$-group if and only if $\psi$ has prime-power order.\\
    Conversely, every affine simple latin cycle set can be constructed in this way.
\end{theor}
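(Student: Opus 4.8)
The statement packages together three claims: (i) for an irreducible representation $\rho$ of $A_1(\Z/p\Z)$ of dimension $n$ with $\rho(a),\rho(b)$ invertible, the structure $\aff{(\Z/p\Z)^n,\phi,\psi,c}$ really is a latin affine cycle set; (ii) it is \emph{simple}; (iii) the converse, that every simple latin affine cycle set arises this way, together with the $p$-group criterion. I would treat these in turn, leaning on \cref{collweyl}, \cref{elab} and \cref{corrgr}.

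For (i): by \cref{collweyl}(2), the relation $[\rho(a),\rho(b)]=\id$ is exactly the condition that makes $\aff{(\Z/p\Z)^n,\rho(b)^{-1},\rho(a),c}$ an affine cycle set; invertibility of $\rho(a),\rho(b)$ makes $\phi,\psi\in\Aut((\Z/p\Z)^n,+)$, hence the cycle set is latin. For (ii), the key point is that $X$ is irretractable (a latin cycle set is irretractable by \cref{prel3}) and that $\dis(X)$ acts regularly; here I would note that for an affine cycle set $\dis(X)$ is abelian and, being a transitive abelian group, is regular, so $X$ is right linear over $\dis(X)$ by \cref{secondp}. Then by \cref{corrgr}(2) the congruences of $X$ correspond to subgroups $H\le (\Z/p\Z)^n$ invariant under both $\phi$ and $\psi$ — equivalently, under $\rho(a)$ and $\rho(b)^{-1}$, i.e. under $\rho(a)$ and $\rho(b)$, i.e. $A_1(\Z/p\Z)$-submodules. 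Irreducibility of $\rho$ says the only such submodules are $0$ and $(\Z/p\Z)^n$, so $X$ has only the trivial congruences and is simple; since $n\geq p>1$ by \cite[Proposition 4.7]{bon2019}, $X$ is not the trivial cycle set of prime size. For the $p$-group criterion I would invoke \cref{permx}: $\mathcal{G}(X)\cong\dis(X)\rtimes\langle\psi\rangle$ with $\dis(X)$ a $p$-group, so $\mathcal{G}(X)$ is a $p$-group precisely when $\psi=\rho(a)$ has $p$-power order.

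For the converse (iii): let $X$ be a simple latin affine cycle set. By \cref{elab}, $\dis(X)$ is elementary abelian of size $p^k$ with $p\mid k$, so $X$ can be written as $\aff{M,\phi,\psi,c}$ with $M=(\Z/p\Z)^k$ and $\phi,\psi\in\Aut(M,+)$; by \cref{collweyl}(1) this endows $M$ with an $A_1(\Z/p\Z)$-module structure via $\rho(a)=\psi$, $\rho(b)=\phi^{-1}$, both invertible. Simplicity of $X$, via \cref{corrgr}(2) again, forces $M$ to have no proper nonzero $\phi,\psi$-invariant subgroup, i.e. $\rho$ is irreducible. Thus $X=\aff{M,\rho(b)^{-1},\rho(a),c}$ is of the claimed form, and the $p$-group statement is as above.

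\textbf{Main obstacle.} The routine part is the dictionary between cycle-set congruences and module submodules; the one spot needing care is the direction ``$\phi,\psi$-invariant subgroup $\iff$ $A_1(\Z/p\Z)$-submodule'': a subgroup invariant under $\psi$ and $\phi$ is invariant under $\rho(a)$ and $\rho(b)^{-1}$, hence (since $\phi=\rho(b)^{-1}$ is an automorphism of a finite group, so $\rho(b)$ is a power of it) under $\rho(b)$ as well, hence under the whole algebra. One must also be slightly careful that \cref{corrgr} is stated for subgroups of $\dis(X)$ under $\overline{\phi_{e,f}},\overline{\psi_{e,f}}$ and translate this back to $\phi,\psi$ acting on $M$ via \cref{raprightlinear}; in the affine case $\dis(X)\cong (M,+)$ and the induced maps are exactly $\phi,\psi$, so no real difficulty arises. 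The other thing to double-check is that $c$ genuinely plays no role in simplicity — this is immediate since the congruence lattice in \cref{corrgr} does not involve $c$.
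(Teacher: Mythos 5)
Your proposal is correct and follows essentially the same route as the paper: \cref{collweyl} to get the latin affine cycle set, \cref{corrgr} to translate congruences into $\phi,\psi$-invariant subgroups (which, as subgroups of $(\mathbb{Z}/p\mathbb{Z})^n$, are automatically subspaces and hence $A_1(\mathbb{Z}/p\mathbb{Z})$-submodules), \cref{permx} for the $p$-group criterion, and \cref{elab} for the converse. The only difference is that you spell out details the paper leaves implicit (irretractability and regularity of $\dis(X)$ before invoking \cref{corrgr}, and the passage from $\rho(b)^{-1}$-invariance to $\rho(b)$-invariance), which is a faithful expansion rather than a different argument.
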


\begin{proof}
    Let $\rho$ be as in the statement. Then, the binary operation $\cdotp$ on  $(\mathbb{Z}/p\mathbb{Z})^n$ given by $x\cdotp y:=\phi(x)+\psi(y)+c$ makes $ (\mathbb{Z}/p\mathbb{Z})^n$ into an affine latin cycle set by Remark \ref{collweyl} . Since $\rho$ is irreducible and in $(\mathbb{Z}/p\mathbb{Z})^n$ subgroups concide with subspaces, by \cref{corrgr} we cannot have non-trivial congruences, hence the constructed cycle set is simple. By \cref{permx} we have that $\mathcal{G}(X)$ is a $p$-group if and only if $\psi$ has prime-power order.\\
    Conversely, suppose that $X$ is an affine simple latin cycle set. Then, by \cite[Section 4]{bon2019} and \cref{elab} there exist a natural number $n$, a prime number $p$ dividing $n$, an element $c\in (\mathbb{Z}/p\mathbb{Z})^n$, and two automorphism $\phi$ and $\psi$ of $(\mathbb{Z}/p\mathbb{Z})^n$, such that $X:=(\mathbb{Z}/p\mathbb{Z})^n$ and $x\cdotp y:=\phi(x)+\psi(y)+c$ for all $x,y\in (\mathbb{Z}/p\mathbb{Z})^n$. Then, the assignments $a\mapsto \psi$ and $b\mapsto \phi^{-1}$ gives rise to a representation $\rho$ of $A_1(\mathbb{Z}/p\mathbb{Z})$ (see also Remark \ref{collweyl}). By simplicity of $X$, the irreducibility of $\rho$ follows. 
\end{proof}

\begin{rem}
    We highlight that by \cref{secondp2}, the simple cycle sets $X$ with displacement group abelian and normal in the total permutation group $\mathcal{TG}(X)$ are all and only the ones provided in the previous theorem.
\end{rem}

Now, we indicate by $(p,\rho,c,\eta,g)$ a quintuple such that:
\begin{itemize}
    \item $p$ is a prime number;
    \item $\rho$ is a representation of the Weyl Algebra $A_1(\mathbb{Z}/p\mathbb{Z})$ with $\rho(a):=\psi$ and $\rho(b):=\phi$ invertible matrices;
    \item $c\in (\mathbb{Z}/p\mathbb{Z})^n $, where $n:=dim(\rho)$;
    \item $(\eta,g) \in C_{Aut(G,+)}(A,B)\times (\mathbb{Z}/p\mathbb{Z})^n$ is such that $\eta(c)-c=Ag+Bg-g$.
\end{itemize}
Below, we show an affine-version of \cref{cossimple}. The proof is left to the reader, since it follows by the previous results and is standard.

\begin{cor}
   Let  $(p,\rho,c,\eta,g)$ be a quintuple such that $\rho$ is an irreducible representation, $\psi$ as order $p^s$ for some natural number $s$, and $\eta$ has order coprime with $p$.  Then, the pair $((\mathbb{Z}/p\mathbb{Z})^n,\cdotp)$ with  
 $$x\cdotp y:=g+\eta(\rho(b)^{-1}x+\rho(a)y+c)$$
 for all $x,y\in (\mathbb{Z}/p\mathbb{Z})^n$ is a simple cycle set having $\dis(X)$ abelian, regular and normal in $\mathcal{TG}(X)$.
\end{cor}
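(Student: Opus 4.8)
The plan is to verify that the stated construction is obtained from the machinery already developed, specifically as a deformation of a simple affine cycle set, and then invoke \cref{descrsimple} to conclude simplicity. First I would observe that the hypotheses on $(p,\rho,c,\eta,g)$ are precisely tailored so that $\aff{(\mathbb{Z}/p\mathbb{Z})^n,\rho(b)^{-1},\rho(a),c}$ is a simple latin affine cycle set: $\rho$ is an irreducible representation of $A_1(\mathbb{Z}/p\mathbb{Z})$ with $\rho(a),\rho(b)$ invertible, so \cref{irrid} applies and gives a simple affine cycle set $X_0$ whose displacement group is elementary abelian, regular, and (by the remark following \cref{irrid} together with \cref{secondp2}) normal in $\mathcal{TG}(X_0)$. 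Since $\psi=\rho(a)$ has order $p^s$, \cref{irrid} also tells us that $\mathcal{G}(X_0)$ is a $p$-group.

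Next I would check that $\eta$ composed with translation by $g$ is an automorphism of $X_0$. By \cref{autquasi}, the automorphism group of $\aff{(\mathbb{Z}/p\mathbb{Z})^n,\phi,\psi,c}$ consists of the maps $t_h\circ\zeta$ with $\zeta\in C_{Aut(G,+)}(\phi,\psi)$ and $\zeta(c)-c=\phi(h)+\psi(h)-h$; the condition $\eta\in C_{Aut(G,+)}(A,B)$ and $\eta(c)-c=Ag+Bg-g$ in the definition of the quintuple is exactly this, so $\alpha:=t_g\circ\eta$ is an automorphism of $X_0$. The operation of the deformation $(X_0)_\alpha$ is then $x\mapsto\alpha(\phi(x)+\psi(y)+c)=g+\eta(\rho(b)^{-1}x+\rho(a)y+c)$, which matches the operation $\bullet$ in the statement. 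Since $\eta$ has order coprime with $p$ and $\alpha=t_g\circ\eta$ need not fix a point, I would instead note that translations are automorphisms of an affine cycle set and conjugate $\alpha$ by a translation to an automorphism of the same order fixing $0$ (or simply re-coordinatize so that $\alpha$ fixes a point); the order of $\alpha$ is still coprime to $p$ because $t_g$ has $p$-power order and $\eta$ has order coprime to $p$, so a suitable translate has order equal to that of $\eta$.

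Finally I would apply \cref{descrsimple}: $X_0$ is simple with $p$-group permutation group $\mathcal{G}(X_0)$, and $\alpha$ (in the re-coordinatized form) is an automorphism of order coprime with $p$ fixing a point, so $(X_0)_\alpha$ is simple. By \cref{invariant}(2), $\dis((X_0)_\alpha)=\dis(X_0)$, which is abelian, regular, and normal in $\mathcal{TG}(X_0)$; and \cref{invariant}(3) ensures $(X_0)_\alpha$ is still affine, so by \cref{secondp2} its displacement group is normal in $\mathcal{TG}((X_0)_\alpha)$ as well. This gives the conclusion. The main obstacle I anticipate is the bookkeeping around the fixed-point hypothesis of \cref{descrsimple}: the automorphism $\alpha=t_g\circ\eta$ produced by \cref{autquasi} generally does not fix a point, so one must argue that replacing it by a translate of the same coprime order (which induces the same deformation up to isomorphism) is legitimate — this is exactly the kind of standard but slightly delicate verification that the paper defers to the reader.
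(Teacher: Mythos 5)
Your route---realise the operation as the deformation of the simple affine cycle set $X_0=\aff{(\mathbb{Z}/p\mathbb{Z})^n,\rho(b)^{-1},\rho(a),c}$ by $\alpha=t_g\circ\eta$, check via \cref{autquasi} that $\alpha\in\Aut(X_0)$, and feed everything to \cref{descrsimple}---is the one the paper gestures at, and your steps up to and including the identification of the operation are fine. The gap is exactly where you suspect it, but your patch cannot work. The map $t_g\circ\eta$ fixes a point iff $g=(1-\eta)(x)$ for some $x$, i.e.\ iff $g\in\mathrm{Im}(1-\eta)$; moreover $(t_g\eta)^m=t_{(1+\eta+\cdots+\eta^{m-1})(g)}$ with $m$ the order of $\eta$, and since $\gcd(m,p)=1$ forces $(\mathbb{Z}/p\mathbb{Z})^n=\ker(1-\eta)\oplus\mathrm{Im}(1-\eta)$ with $1+\eta+\cdots+\eta^{m-1}$ acting invertibly on the first summand and as $0$ on the second, the order of $t_g\eta$ is coprime with $p$ under exactly the same condition $g\in\mathrm{Im}(1-\eta)$. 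The quintuple hypotheses do not exclude $g\notin\mathrm{Im}(1-\eta)$: for $\eta=\id$ they only require $g\in\ker(\phi+\psi-1)$, which is nonzero e.g.\ for the $\mu=1$ representations appearing in \cref{simple Rumples}. In that case $\alpha$ is fixed-point free of order divisible by $p$, and no conjugation or ``re-coordinatization'' can repair this: for any bijection $\beta$, $\beta\alpha\beta^{-1}$ fixes $\beta(x)$ iff $\alpha$ fixes $x$, so a fixed-point-free automorphism has only fixed-point-free conjugates. (In addition, conjugating by $t_h$ only replaces $g$ by $g+(1-\eta)(h)$, which never leaves the coset $g+\mathrm{Im}(1-\eta)$, and by \cref{autquasi} only the translations $t_h$ with $h\in\ker(\phi+\psi-1)$ lie in $\Aut(X_0)$ anyway.) The claim that ``a suitable translate has order equal to that of $\eta$'' is therefore false in general, and \cref{descrsimple} does not apply to all admissible $(\eta,g)$.

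The statement is still true, but the missing case must be handled without \cref{descrsimple}. The cleanest repair stays inside the affine picture: the deformed cycle set is again affine, namely $\aff{(\mathbb{Z}/p\mathbb{Z})^n,\eta\phi,\eta\psi,\,g+\eta(c)}$, and by \cref{congrp} (equivalently \cref{corrgr}) its congruences correspond to subspaces invariant under $\eta\phi$ and $\eta\psi$ --- the constant plays no role, so the fixed-point issue evaporates. Since $\eta$ commutes with $\psi$ and $\gcd(|\eta|,|\psi|)=\gcd(|\eta|,p^s)=1$, choosing $k\equiv 0\pmod{p^s}$ and $k\equiv 1\pmod{|\eta|}$ gives $(\eta\psi)^k=\eta$, so any such subspace is invariant under $\eta$, hence under $\psi$ and $\phi$ separately, hence trivial by irreducibility of $\rho$. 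Combined with your (correct) use of \cref{invariant} and \cref{secondp2} for the properties of $\dis(X)$, this finishes the proof for every admissible $g$. Alternatively, split $g=g_0+g_1$ with $g_1\in\mathrm{Im}(1-\eta)$ and $g_0\in\ker(1-\eta)$: the factor $t_{g_1}\eta$ does satisfy the hypotheses of \cref{descrsimple}, and the remaining deformation by $t_{g_0}$ changes neither $\phi$ nor $\psi$ and hence, again by \cref{congrp}, does not change the congruence lattice.
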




We close this subsection showing that even if, in full generality, an affine cycle set is not simple (for example, take an affine cycle set having size $p^\alpha q^\beta$, for distinct prime numbers $p,q$), it always provides simple non-trivial cycle sets.
\begin{prop}
    Let $X$ be a finite cycle set affine over a group $G$ and given by $x\cdotp y:=\phi(x)+\psi(y)+c$. Let $H$ be a subgroup of $G$ that is minimal among the subgroups invariant under the actions of $\phi$ and $\psi$. Let $h\in H$ and define $Y$ as the cycle set given by $Y:=H$ and $x\cdotp y:=\phi(x)+\psi(y)+h$ for all $x,y\in Y$. Then, $Y$ is a simple non-trivial cycle set affine over $H$.
\end{prop}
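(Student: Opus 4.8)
The plan is to verify that $Y$ is a non-trivial affine latin cycle set over $H$, then to describe its congruences via \cref{congrp}, and finally to use the minimality of $H$ to conclude that only the two trivial congruences occur.

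First I would check the structural facts about $Y$. Since $G$ is finite and $\phi,\psi\in\Aut(G,+)$, the hypothesis that $H$ is invariant under $\phi$ and $\psi$ forces $\phi(H)=H$ and $\psi(H)=H$, so $\phi|_H,\psi|_H\in\Aut(H,+)$; the same reasoning applied to $\phi+\psi$, which is an automorphism of $G$ by non-degeneracy of $X$ and which preserves $H$, gives $(\phi+\psi)|_H\in\Aut(H,+)$. As $h\in H$, the operation $x\cdot y:=\phi|_H(x)+\psi|_H(y)+h$ is then well defined on $H$, and the affine cycle-set identity $\phi\psi-\psi\phi=\phi^2$ holds on $H$ because it holds on $G$; hence $Y=\aff{H,\phi|_H,\psi|_H,h}$ is a non-degenerate affine cycle set (equivalently, $H$ is an $A_1(\mathbb{Z})$-submodule of $(G,+)$ for the module structure of \cref{collweyl}, so one may apply part~(2) of \cref{collweyl}). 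Because $\phi|_H$ is bijective, $Y$ is latin, hence irretractable and indecomposable by \cref{prel3}, and it is affine over $H$ by construction. I would also note that $Y$ is non-trivial: a trivial cycle set has $x\cdot y$ independent of its first argument, which here would force $\phi|_H=0$, impossible as soon as $H\neq 0$ (the case $H=0$, which would make $Y$ a one-point cycle set, being tacitly excluded).

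Next I would determine the congruences of $Y$. Rewriting $x\cdot y=\bigl(\phi|_H(x)+h\bigr)+\psi|_H(y)$ exhibits $Y$ as a finite latin left quasigroup right linear over $(H,+)$, with the map playing the role of ``$\phi$'' equal to the bijection $\phi_{\mathrm{rl}}\colon x\mapsto\phi|_H(x)+h$ and the one playing the role of ``$\psi$'' equal to $\psi|_H$. By \cref{congrp}, the congruences of $Y$ are then in bijection with the subgroups $K\leq(H,+)$ invariant under $\phi_{\mathrm{rl}}$ and under $\psi|_H$, where invariance is meant in the sense of the definition stated just before \cref{primap}. The point that makes this workable is that the additive constant $h$ is immaterial: since $(H,+)$ is abelian, $-\phi_{\mathrm{rl}}(x)+\phi_{\mathrm{rl}}(y)=-\phi|_H(x)+\phi|_H(y)=\phi|_H(-x+y)$ for all $x,y\in H$, so $K$ is $\phi_{\mathrm{rl}}$-invariant precisely when $\phi|_H(K)\subseteq K$, and similarly $K$ is $\psi|_H$-invariant precisely when $\psi|_H(K)\subseteq K$. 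Thus the congruences of $Y$ correspond to the subgroups of $H$ invariant under both $\phi|_H$ and $\psi|_H$.

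Finally I would invoke minimality: any subgroup $K$ of $H$ invariant under $\phi|_H$ and $\psi|_H$ is also a subgroup of $G$ invariant under $\phi$ and $\psi$ (invariance being a condition on differences, it is unaffected by passing between $H$ and $G$), so the minimality of $H$ forces $K=0$ or $K=H$; the corresponding congruences are $0_Y$ and $1_Y$. Hence $Y$ is an indecomposable cycle set with no non-trivial congruence, i.e. a simple one, and we have already seen it is non-trivial and affine over $H$. I expect the only mildly delicate point to be the bookkeeping of the constant $h$ when passing to the right-linear form; as indicated, it washes out thanks to commutativity of $(H,+)$, and once that is observed the statement is a direct assembly of \cref{prel3}, \cref{congrp}, \cref{collweyl}, and the minimality hypothesis.
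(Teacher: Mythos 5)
Your proof is correct and follows essentially the same route as the paper's: the paper's own argument consists of declaring the affine structure of $Y$ a ``standard calculation'' and then deriving a contradiction with minimality from a proper nonzero invariant subgroup of $H$, implicitly via the congruence correspondence of \cref{congrp}. You have simply made explicit the details the paper leaves to the reader, including the useful observation that the additive constant $h$ is irrelevant to invariance because it cancels in differences.
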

 \begin{proof}
By a standard calculation, one can show that $Y$ is a non-trivial cycle set affine over $H$. If $Y$ is not simple, there exist a subgroup $K$ of $H$ that is invariant under the actions of $\phi$ and $\psi$. This implies that $H$ is not minimal among the subgroups of $G$ invariant under the actions of $\phi$ and $\psi$, a contradiction. Hence $Y$ is simple.
\end{proof}

\subsection{Simple affine cycle sets of minimal size}

By \cref{irrid}, several examples of simple cycle sets can be obtained by irreducible representations of the first Weyl Algebra $A_1(\mathbb{Z}/p\mathbb{Z})$. Even if these representations are not classified over arbitrary fields (to our knowledge), several examples can be recovered from the irreducible representations of the first Weyl Algebras on algebrically closed fields, that are completely classified. Indeed, if $K$ is an algebrically closed field of characteristic $p>0$, all the finite dimensional irreducible representations of $A_1(K):=K[a,b]/(ab-ba-1)$ are given by the $p\times p$ matrices 
\begin{equation}\label{canonrapp}
    M_a:=\begin{bmatrix}
    0 & 0 & 0 & ... & \mu \\
    1 & 0 & 0 & ... & 0 \\
    0 & 1 & 0 & ... & 0 \\
    \vdots & \vdots  & \vdots & \vdots & \vdots  \\
    0 & 0 & 0 &  1 & 0
\end{bmatrix}
\qquad and \qquad
M_b:=\begin{bmatrix}
    \lambda & p-1 & 0 & ... & 0 \\
    0 & \lambda & p-2 & ... & 0 \\
    0 & 0 & \lambda & ... & 0 \\
    \vdots & \vdots  & \vdots & \vdots & 1  \\
    0 & 0 & 0 &  0 & \lambda
\end{bmatrix}
\end{equation}
where $\mu,\lambda\in K$. The argument for the proof of the following Proposition was suggested in \cite{overflow}.

\begin{prop}\label{matovp2}
    Let $p$ be a prime number, $V:=(\mathbb{Z}/p\mathbb{Z})^p$ and  $\rho:A_1(\mathbb{Z}/p\mathbb{Z})\rightarrow End(V)$ be an irreducible representation of $A_1(\mathbb{Z}/p\mathbb{Z})$ of dimension $p$. Then, there exists a basis $\{e_1,...,e_p\}$ of $V$ such that $\rho(a)=M_a$ and $\rho(b)=M_b$ for suitable elements $\mu,\lambda\in \mathbb{Z}/p\mathbb{Z}$. 
\end{prop}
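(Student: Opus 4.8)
The plan is to prove that an irreducible $p$-dimensional representation $\rho$ of $A_1(\mathbb{Z}/p\mathbb{Z})$ on $V=(\mathbb{Z}/p\mathbb{Z})^p$ can be put, after a change of basis, into the canonical form $(M_a,M_b)$. First I would exploit the special structure of $A_1(\mathbb{K})$ in characteristic $p$: the elements $a^p$ and $b^p$ are central in $A_1(\mathbb{K})$ (this is the standard fact that $[b,a^p]=p a^{p-1}=0$ and similarly for $b^p$), so by a Schur-type argument on the finite-dimensional module $V$ — here one must be slightly careful since $\mathbb{Z}/p\mathbb{Z}$ is not algebraically closed, but $\rho(a)^p$ and $\rho(b)^p$ are scalar endomorphisms \emph{after} checking they commute with the irreducible action and have an eigenvalue in $\mathbb{Z}/p\mathbb{Z}$, or alternatively by passing to $\overline{\mathbb{F}_p}$, descending at the end — we get $\rho(a)^p=\mu\cdot\mathrm{id}$ and $\rho(b)^p=\lambda'\cdot\mathrm{id}$ for scalars $\mu,\lambda'$. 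Since $\rho(a)$ is assumed invertible, $\mu\neq 0$.

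Next I would produce the cyclic basis. Set $\psi:=\rho(a)$, $\phi^{-1}:=\rho(b)$, and use the relation $[\psi,\rho(b)]=\mathrm{id}$, i.e. $\psi\rho(b)-\rho(b)\psi=\mathrm{id}$. Because $\rho(b)^p=\lambda'\,\mathrm{id}$, the minimal polynomial of $\rho(b)$ divides $t^p-\lambda'=(t-\lambda)^p$ where $\lambda^p=\lambda'$ (using the Frobenius identity in characteristic $p$), so $\rho(b)-\lambda\,\mathrm{id}$ is nilpotent; let $N:=\rho(b)-\lambda\,\mathrm{id}$. From $\psi N - N\psi = \mathrm{id}$ one shows, as in the classical $\mathfrak{sl}_2$ / Heisenberg argument, that $\psi$ raises the "degree" in $N$: if $v$ lies in $\ker N$ then $\psi v, \psi^2 v, \dots$ span a $\psi$-cyclic subspace on which $N$ acts as a lowering operator, and by irreducibility this subspace is all of $V$. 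Concretely, pick $0\neq e_p\in\ker N$ (nonzero since $N$ is nilpotent) and define $e_{p-1}:=\psi e_p$, down to $e_1:=\psi^{p-1}e_p$ — wait, I will index so that $e_i:=\psi^{\,p-i}(e_p)$; the commutator relation then forces $N e_i = (\text{appropriate scalar})\, e_{i+1}$, giving exactly the super/sub-diagonal entries $p-1,p-2,\dots,1$ of $M_b$ once the basis is normalized, while $\psi$ acts as the cyclic shift with the wrap-around entry $\psi^p = \mu\,\mathrm{id}$ supplying the corner entry $\mu$ of $M_a$. One checks this family $\{e_1,\dots,e_p\}$ is linearly independent (it spans a nonzero $\rho$-submodule, hence all of $V$, and $\dim V=p$) and therefore a basis.

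The final step is bookkeeping: verify that in the basis $\{e_1,\dots,e_p\}$ the matrices of $\rho(a)$ and $\rho(b)$ are literally $M_a$ and $M_b$ for the scalars $\mu$ (from $\psi^p=\mu\,\mathrm{id}$) and $\lambda$ (from $N=\rho(b)-\lambda\,\mathrm{id}$), possibly after rescaling the $e_i$ to normalize the off-diagonal coefficients to $p-1,\dots,1$. I expect the main obstacle to be the scalar-extraction step: over a non-algebraically-closed field one cannot immediately invoke Schur's lemma to say central elements act by scalars. The cleanest fix is to note that $V\otimes_{\mathbb{F}_p}\overline{\mathbb{F}_p}$ decomposes into Galois-conjugate irreducibles all of the same dimension $p$; the canonical form over $\overline{\mathbb{F}_p}$ recalled in \eqref{canonrapp} then gives $\mu,\lambda\in\overline{\mathbb{F}_p}$, and Galois-invariance of the $\mathbb{F}_p$-structure forces $\mu^p=\mu$ and $\lambda^p=\lambda$, i.e. $\mu,\lambda\in\mathbb{Z}/p\mathbb{Z}$; this is presumably the point the cited remark from \cite{overflow} is meant to handle. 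The rest is the standard Heisenberg-module computation and is routine.
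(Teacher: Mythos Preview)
Your approach is correct and takes a genuinely different route from the paper's. You work intrinsically: use that $a^p,b^p$ are central in $A_1(\mathbb{F}_p)$ to get $\rho(b)^p=\lambda'\,I$ with $\lambda'\in\mathbb{F}_p$ (for the Schur step, note $\mathrm{End}_{A_1}(V)$ is a finite field $\mathbb{F}_{p^k}$ with $k\mid p$, and $k=p$ would make $\rho(a),\rho(b)$ into $\mathbb{F}_{p^p}$-scalars, contradicting $[\rho(a),\rho(b)]=1$); then $N=\rho(b)-\lambda I$ is nilpotent with $\lambda\in\mathbb{F}_p$, and you build the basis by iterating $\psi=\rho(a)$ on a vector in $\ker N$, the relation $[\psi,N]=1$ producing the super-diagonal entries of $M_b$. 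Two minor fixes: your indexing is reversed --- in the displayed matrices $\ker(M_b-\lambda I)=\langle e_1\rangle$, so one should take $e_1\in\ker N$ and $e_{i+1}:=\psi(e_i)$ --- and $V\otimes_{\mathbb{F}_p}\overline{\mathbb{F}_p}$ does \emph{not} decompose: it stays irreducible, precisely because every irreducible $A_1(\overline{\mathbb{F}_p})$-module has dimension $p$. The paper instead argues by \emph{extension and descent}: it extends $\rho$ to the algebraic closure $K$, notes the extension is still irreducible (same dimension reason), invokes the canonical form \eqref{canonrapp} over $K$ to obtain $C\in GL_p(K)$ with $C^{-1}\rho(a)C=M_a$ and $C^{-1}\rho(b)C=M_b$, and then shows $C$ can be taken in $GL_p(\mathbb{F}_p)$: the characteristic polynomial $(t-\lambda)^p=t^p-\lambda^p$ forces $\lambda\in\mathbb{F}_p$, the first column of $C$ may be chosen as an $\mathbb{F}_p$-rational $\lambda$-eigenvector of $\rho(b)$, and the remaining columns are $\rho(a)^{j-1}$ applied to it, which also yields $\mu\in\mathbb{F}_p$. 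Your argument is more self-contained (it re-derives the canonical form rather than citing it); the paper's is quicker once the classification over $K$ is available.
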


\begin{proof}
    Let $M_1:=\rho(a)$, $M_2:=\rho(b)$ with respect some basis $\mathcal{B}=\{e_1,\ldots,e_p\}$ of $V$, and $K$ be the algebraic closure of $\mathbb{Z}/p\mathbb{Z}$. Since $dim(V)=p$, $M_1$ and $M_2$ provide a $p$-dimensional representation $\bar{\rho}$ of $A_1(K)$, that must be irreducible. Moreover, we have that $\mathcal{B}$ can be regarded as a basis of $K^p$, and we have $\bar{\rho}(a)=M_1$ and $\bar{\rho}(b)=M_2$ with respect $\mathcal{B}$. 
    By the previous remark, there exist a basis $\mathcal{B}':=\{e'_1,...,e'_p\}$ of $K^p$ such that $\bar{\rho}(a)=M_a$ and $\bar{\rho}(b)=M_b$ for suitable elements $\mu,\lambda\in K$. Hence, there exist a matrix $C\in GL_p(K)$ providing the change of basis from $\mathcal{B}'$ to $\mathcal{B}$ such that $M_a=C^{-1}M_1 C$ and $M_b=C^{-1}M_2 C$. To show the statement, is sufficient proving that $C\in GL_p(\mathbb{Z}/p\mathbb{Z})$.\\
    The characteristic polynomial does not change by conjugacy, therefore we have that the characteristic polynomial of $M_2$, which is equal to the one of $M_b$, is $(x-\lambda)^p=x^p-\lambda^p$, hence $\lambda^p\in \mathbb{Z}/p\mathbb{Z}$ and this implies that $\lambda$ is an element of $ \mathbb{Z}/p\mathbb{Z}$. 
    Since the dimension of the $\lambda$-eigenspace of $M_2$ as endomorphism of $(\mathbb{Z}/p\mathbb{Z})^p$ (which is equal to the dimension of the $\lambda$-eigenspace of $M_b$ as endomorphism of $K^p$) is $1$, the vector $e'_1$, up to multiply by a suitable scalar, is an eigenvector of $M_2$. Therefore, we can assume that $e'_1=e_1$ (up to consider matrices $M_1'$ and $M_2'$ similar to $M_1$ and $M_2$ in $GL_p(\mathbb{Z}/p\mathbb{Z})$). 
Now, let $f_i\in \mathbb{Z}/p\mathbb{Z}^p$ having $0$ in the $j$-th position, with $j\neq i$, and $1$ in the $i$-th position. Therefore, the first column of $C$ is $f_1$. Now, we have that $M_{a}^{j-1} f_1=C^{-1} M_{1}^{j-1} C f_1=C^{-1} M_1^{j-1} f_1$ and hence $C f_j= M_{1}^{j-1} f_1 $ for every $j\in \{1,...,p\}$, and in a similar way $\mu C f_1= M_{1}^{p} f_1 $. From these equalities, we obtain $\mu\in \mathbb{Z}/p\mathbb{Z}$ and $C\in GL_p(\mathbb{Z}/p\mathbb{Z})$.

\end{proof}

By \cref{irrid} and \cref{matovp2}, we are able to construct concretely \emph{all} the affine simple cycle sets of size $p^p$, for an arbitrary prime number $p$. The next goal is to distinguish the isomorphism classes.

\noindent Now, we define the algebra of the $\mu$-circulant matrices over a field $K$ in analogy with the classical circulant matrices. Recall that a square matrix is \emph{circulant} if it is constant on all broken diagonals, and denote by $Circ(c_1,\ldots, c_n)$ the $n \times n$ circulant matrix with first row equal to $(c_1,\ldots, c_n)$. Let $\mu\in K$ we define the \emph{$\mu$-circulant} matrix $\mu-Circ(c_1,\ldots, c_n)$ to be the matrix obtained by $Circ(c_1,\ldots,c_n)$ multiplying all the upper-triangular entries of $Circ(c_1,\ldots, c_n)$ by the element $\mu$, as
\begin{equation}
\mu-Circ(c_1,\ldots, c_n)=\begin{bmatrix}
c_1 & \mu c_2 & \ldots & \ldots& \mu c_{n-1} & \mu c_n \\
c_n & c_1 & \mu c_2& \ldots &\ldots & \mu c_{n-1}\\
c_{n-1} & c_n & c_1 & \mu c_2 & \ldots& \ldots\\
\vdots & \vdots &\vdots & \vdots & \vdots& \vdots\\
c_3 & \ldots & \ldots &c_n &c_1 & \mu c_2\\
c_2 & c_3 & \ldots &c_{n-1} & c_n & c_1
\end{bmatrix}\, .
\end{equation}

The algebra of the circulant matrices coincides with the centralizer of the permutation matrix representing the $n$-cycle. We will show that the $\mu$-circulant matrices are the centralizer of the following matrix:
\begin{equation}\label{A_a def}
A_\mu=\begin{bmatrix}
0 & 0 & \ldots & 0 & \mu\\
1 & 0 & \ldots & 0 & 0\\
0 & 1 & \ldots & 0 & 0\\
\vdots & \vdots &\vdots & \vdots & \vdots\\
0 & 0 & \ldots & 1 & 0\\
\end{bmatrix}.
\end{equation}


\begin{lemma}\label{centralizer of A_a}
The set of the $n\times n$ $\mu$-circulant matrices is the subalgebra of the matrices which commutes with $A_\mu$ (as defined in \eqref{A_a def}).
\end{lemma}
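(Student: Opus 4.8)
The plan is to show the two inclusions between the algebra of $\mu$-circulant matrices and the centralizer $C(A_\mu) = \{M : MA_\mu = A_\mu M\}$. The key computational fact is that $A_\mu$ acts on the standard basis by the "twisted shift" $A_\mu e_i = e_{i+1}$ for $1 \le i \le n-1$ and $A_\mu e_n = \mu e_1$, so that $A_\mu^n = \mu\, I$. First I would record how $A_\mu$ conjugates an elementary matrix, or equivalently simply compute $A_\mu^k$ explicitly: $A_\mu^k$ sends $e_i$ to $e_{i+k}$ when $i+k \le n$ and to $\mu e_{i+k-n}$ when $i+k > n$; in matrix terms $A_\mu^k$ is the $\mu$-circulant matrix $C_\mu(c_1,\dots,c_n)$ with $c_j = 1$ if $j = n-k+1$ (indices taken so that the single nonzero diagonal is in the right place) and $c_j = 0$ otherwise. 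Hence each power of $A_\mu$ is $\mu$-circulant, and since the $\mu$-circulant matrices are easily seen to be the $K$-linear span of $I, A_\mu, A_\mu^2, \dots, A_\mu^{n-1}$ (the first rows $(c_1,\dots,c_n)$ range over all of $K^n$ as the $c_i$ vary, and the map from first row to matrix is linear and injective), the set of $\mu$-circulant matrices is exactly $K[A_\mu]$. This is a commutative subalgebra, so every $\mu$-circulant matrix commutes with $A_\mu$; that gives one inclusion $\{\mu\text{-circulant}\} \subseteq C(A_\mu)$, and it is automatically a subalgebra since it is $K[A_\mu]$.

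For the reverse inclusion, suppose $M \in C(A_\mu)$, i.e. $MA_\mu = A_\mu M$. I would read off the entries: writing $M = (m_{ij})$ and using $A_\mu e_j = e_{j+1}$ (resp. $\mu e_1$), the relation $M A_\mu e_j = A_\mu M e_j$ for each $j$ forces a recursion relating the $(j+1)$-st column of $M$ to the image under $A_\mu$ of the $j$-th column. Concretely, for $1 \le j \le n-1$ one gets $m_{i,j+1} = m_{i-1,j}$ for $i \ge 2$ and $m_{1,j+1} = \mu^{-1}$-type relations — more carefully, comparing $A_\mu$ applied to column $j$ with column $j+1$ shows that column $j+1$ is the $\mu$-shift of column $j$; and the wrap-around relation coming from $j = n$ (using $A_\mu e_n = \mu e_1$) closes the cycle consistently. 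Propagating this from the first column determines every entry of $M$ from its first column, and the pattern obtained is precisely that of $C_\mu(c_1,\dots,c_n)$ with $(c_1,\dots,c_n)$ determined by the first column of $M$. Hence $M$ is $\mu$-circulant.

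The main obstacle is purely bookkeeping: keeping the index arithmetic (mod $n$) straight through the twisted wrap-around, and making sure the factors of $\mu$ land on exactly the upper-triangular entries as in the definition of $C_\mu$. There is no conceptual difficulty — the cleanest route, which I would adopt to minimize index juggling, is the first one: prove that $\{\mu\text{-circulant matrices}\} = K[A_\mu]$ by the linear-span argument above (this already shows it is a subalgebra and is contained in $C(A_\mu)$), and then prove $\dim_K C(A_\mu) \le n$ so that equality is forced. The dimension bound follows because the relation $MA_\mu = A_\mu M$ together with the fact that $e_1, A_\mu e_1, \dots, A_\mu^{n-1}e_1$ is a basis of $K^n$ (since $A_\mu$ is a companion-type matrix with an $n$-cycle structure, its minimal polynomial $x^n - \mu$ has degree $n$) shows that any $M$ commuting with $A_\mu$ is determined by the single vector $Me_1$, giving $\dim C(A_\mu) \le n = \dim K[A_\mu]$. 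Combining the inclusion with the equality of dimensions yields $C(A_\mu) = K[A_\mu] = \{\mu\text{-circulant matrices}\}$, which is the claim.
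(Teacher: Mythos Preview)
Your argument is correct. The route you settle on is different from the paper's: the paper simply computes the entries of $CA_\mu$ and $A_\mu C$ directly, writes down the resulting linear system on the $c_{i,j}$, and reads off that the solutions are exactly the $\mu$-circulant matrices. Your approach instead identifies the set of $\mu$-circulant matrices with $K[A_\mu]$ (via the observation that $I,A_\mu,\ldots,A_\mu^{n-1}$ give the ``standard basis'' $C_\mu(0,\ldots,0,1,0,\ldots,0)$ of that $n$-dimensional space) and then invokes the fact that a matrix with a cyclic vector has centralizer equal to its own polynomial algebra, using the dimension bound $\dim C(A_\mu)\le n$.

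What each buys: the paper's computation is completely elementary and self-contained, requiring no background facts about centralizers. Your version is shorter and more conceptual; it also makes the ``subalgebra'' part of the statement automatic (since $K[A_\mu]$ is manifestly a commutative subalgebra), whereas the paper does not spell that out. One small caveat worth making explicit in your write-up is that the cyclic-vector/dimension argument works for all $\mu$ (including $\mu=0$), since $A_\mu$ is a companion-type matrix with minimal polynomial $x^n-\mu$ of full degree regardless; you allude to this but it is the only place a reader might pause.
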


\begin{proof}
Let $C=(c_{i,j})$ be a $n\times n$ matrix. Then
\begin{displaymath}
(C A_\mu)_{i,j}=
    \begin{cases}
c_{i,j+1}, \quad \text{if } j\neq n,\\
\mu c_{i,1},\quad \text{if } j=n,
\end{cases}, \quad (A_\mu C)_{i,j}=
    \begin{cases}
c_{i-1,j}, \quad \text{if } i\neq 1,\\
\mu c_{n,j},\quad \text{if } i=1.
\end{cases}
\end{displaymath}
Therefore $C$ commutes with $A_\mu$ if and only if 
\begin{displaymath}
\begin{cases}
c_{i,j+1}=c_{i-1,j}, \, \text{ for every } i\neq 1, \, j\neq n,\\
\mu c_{i,1}=c_{i-1,n}, \, \text{ for every } i\neq 1,\\
c_{1,j+1}=\mu c_{n,j}, \, \text{ for every } j\neq n,\\
\mu c_{1,1}= \mu c_{n,n}.
\end{cases}
\end{displaymath}
Hence, the centralizer of $A_\mu$ coincides with the set of the $\mu$-circulant matrices.
\end{proof}

Let $\D = (d_{i,j})$ be the $n \times n$ matrix defined by 

$$d_{i,j}= \begin{cases} n-i, \, \text{ if } j=i+1\pmod{n},\\
0,\, \text{otherwise}.\end{cases}
$$ 
e.g. if $n=3$ we have
$$\D=\begin{bmatrix}  0& 2 & 0 \\ 0& 0 & 1 \\ 0& 0 & 0\end{bmatrix}$$
\begin{lemma}\label{commutator with D}
Let $C$ be a $p\times p$ $\mu$-circulant matrix. Then $[C,\Delta]=0$ if and only if $C=s I$ for some $s\in K$.
\end{lemma}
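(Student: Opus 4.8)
The forward implication is immediate, since scalar matrices are central in $M_p(K)$. For the converse the plan is to compute a single column of the commutator $[C,\Delta]$ and to read off from it that all the off-diagonal parameters of the $\mu$-circulant matrix $C$ must vanish.

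Concretely, I would write $C=C_\mu(c_1,\dots,c_p)$ and note that, by the definition of $\Delta$, one has $\Delta=\sum_{i=1}^{p-1}(p-i)E_{i,i+1}$, where $E_{a,b}$ is the elementary matrix with a $1$ in position $(a,b)$ (the only possible wrap-around entry $d_{p,1}$ equals $p-p=0$, so $\Delta$ is strictly upper triangular). First I would record the effect of multiplying by $\Delta$ on the left and on the right: the matrix $C\Delta$ has zero first column, whereas $\Delta C$ has its $i$-th row equal to $(p-i)$ times the $(i+1)$-th row of $C$ for $1\le i\le p-1$ and zero last row. Consequently the first column of $[C,\Delta]=C\Delta-\Delta C$ has $k$-th entry $-(p-k)(C)_{k+1,1}$ for $1\le k\le p-1$. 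Since $p$ is prime and we work in characteristic $p$, the scalars $p-1,p-2,\dots,1$ are units of $K$, so $[C,\Delta]=0$ forces $(C)_{k+1,1}=0$ for every $k=1,\dots,p-1$; that is, the whole first column of $C$ below the $(1,1)$-entry vanishes. By the shape of a $\mu$-circulant matrix its first column is $(c_1,c_p,c_{p-1},\dots,c_2)^{\mathsf T}$, hence $c_2=c_3=\dots=c_p=0$, and therefore $C=C_\mu(c_1,0,\dots,0)=c_1I$, which completes the argument with $s:=c_1$.

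The only point requiring care is the invertibility of the superdiagonal coefficients $p-1,\dots,1$ of $\Delta$, which is exactly where the assumptions that the matrices have size equal to the prime $p$ and that the ground field has characteristic $p$ (as throughout this section) are used; granting this, one column of the commutator already suffices and no further computation is needed. As an alternative, more conceptual route one could invoke \cref{centralizer of A_a}: a $\mu$-circulant $C$ commutes with $A_\mu=M_a$, while $[C,\Delta]=0$ says that $C$ commutes with $M_b=\lambda I+\Delta$, so $C$ lies in the commutant of the absolutely irreducible pair $(M_a,M_b)$ of \eqref{canonrapp}, and Schur's lemma forces $C\in K\cdot I$. I would nonetheless present the first, self-contained computation.
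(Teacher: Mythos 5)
Your proof is correct and takes essentially the same route as the paper: both compute that the first column of $C\Delta$ is zero while the first column of $\Delta C$ is $((p-1)c_p,(p-2)c_{p-1},\dots,c_2,0)^{\mathsf T}$, and conclude $c_2=\dots=c_p=0$, hence $C=c_1I$. Your version merely makes explicit the (implicit in the paper) fact that the coefficients $p-1,\dots,1$ are units in characteristic $p$, and adds an optional Schur-lemma detour that the paper does not take.
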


\begin{proof}
Let $C=\mu$-$Circ(c_1,\ldots, c_p)$. It is straightforward to compute that the first column of $\Delta\cdotp C$ is $((p-1)c_{p}, (p-2) c_{p-1},\ldots,   c_2,0)$ and the first column $C\cdotp  \Delta$ is $(0,\ldots,0)$. Therefore if $[C,\Delta]=0$ holds then $c_2=c_3=\ldots=c_{p}=0$ and so $C=c_1 I$.
\end{proof}

To identify the isomorphism classes of affine cycle sets of size $p^p$ we are going to use the following criterion. The criterion is state for quasigroups and so it covers the case of finite latin cycle sets.
\begin{theor}[Dr\'apal {\cite[Thm. 3.2]{drapal2009group}}]\label{Th:IsoThm}
Let $Q=\aff{G,\phi,\psi,c}$ and $Q'=\aff{G,\phi',\psi',c'}$ be affine quasigroups. Then $Q$ is isomorphic to $Q'$ if and only if there are $\alpha\in\Aut{(G,+)}$  and $u\in\mathrm{Im}(1-\phi-\psi)$ such that $\phi'=  \alpha \phi\alpha^{-1}$, $\psi'= \alpha \psi\alpha^{-1}$ and $c'=\alpha(c+u)$.
\end{theor}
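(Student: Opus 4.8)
The plan is to prove that an isomorphism between two affine quasigroups over the same abelian group must itself be an affine bijection, and then to translate the homomorphism identity into the three conditions of the statement.

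First I would pin down the displacement group. For $Q=\aff{G,\phi,\psi,c}$ a direct computation gives $\sigma_x\sigma_y^{-1}(z)=z+\phi(x-y)$ and $\sigma_x t_g\sigma_x^{-1}=t_{\psi(g)}$, where $t_g$ denotes translation by $g$ on $(G,+)$; since $\phi\in\Aut(G,+)$, the first identity shows that $\{\sigma_x\sigma_y^{-1}\}$ already runs over the full translation group $T\leq\Sym(G)$, and the second shows $T$ is normalized by every $\sigma_x$, so $\dis(Q)=T$ — a subgroup of $\Sym(G)$ that does not depend on $\phi,\psi,c$. Now let $f\colon Q\to Q'$ be a quasigroup isomorphism. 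By \cref{lemmaprep1} it induces a group isomorphism $\bar f\colon\mathcal G(Q)\to\mathcal G(Q')$ with $\bar f(\sigma_x)=\sigma'_{f(x)}$, and checking on generators shows $\bar f(g)=fgf^{-1}$ for all $g\in\mathcal G(Q)$. Because $\dis$ is the normal closure of a generating set that $\bar f$ sends onto the corresponding generating set of $\dis(Q')$, we get $f\,T\,f^{-1}=\dis(Q')=T$, i.e. $f$ normalizes $T$ inside $\Sym(G)$. Conjugation by $f$ therefore restricts to a group automorphism $\theta$ of $T\cong(G,+)$, say $ft_gf^{-1}=t_{\theta(g)}$ with $\theta\in\Aut(G,+)$; evaluating $f(t_g(x))=t_{\theta(g)}(f(x))$ at $x=0$ yields $f(x)=\theta(x)+f(0)$. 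So $f$ has the form $f(x)=\alpha(x)+b$ with $\alpha:=\theta\in\Aut(G,+)$ and $b:=f(0)$. (This is the isomorphism-level analogue of \cref{autquasi}.)

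Next I would substitute $f(x)=\alpha(x)+b$ into $f(x\cdot y)=f(x)\cdot' f(y)$. The left side expands to $\alpha\phi(x)+\alpha\psi(y)+\alpha(c)+b$ and the right side to $\phi'\alpha(x)+\psi'\alpha(y)+\phi'(b)+\psi'(b)+c'$. Since $x$ and $y$ are arbitrary in $(G,+)$, comparing the terms depending on $x$, those depending on $y$, and the constant terms gives $\alpha\phi=\phi'\alpha$, $\alpha\psi=\psi'\alpha$, and $\alpha(c)+b=\phi'(b)+\psi'(b)+c'$. The first two are exactly $\phi'=\alpha\phi\alpha^{-1}$ and $\psi'=\alpha\psi\alpha^{-1}$. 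For the last, rewrite it as $c'=\alpha(c)+(1-\phi'-\psi')(b)$ and use $1-\phi'-\psi'=\alpha(1-\phi-\psi)\alpha^{-1}$ to get $c'=\alpha\bigl(c+(1-\phi-\psi)(\alpha^{-1}(b))\bigr)$; setting $u:=(1-\phi-\psi)(\alpha^{-1}(b))\in\mathrm{Im}(1-\phi-\psi)$ gives $c'=\alpha(c+u)$, which is the ``only if'' direction. For the converse, given $\alpha$ and $u=(1-\phi-\psi)(w)$ with $c'=\alpha(c+u)$, I would verify that $f(x):=\alpha(x+w)$ is a quasigroup isomorphism $Q\to Q'$ by running the same computation backwards, the key cancellation being $\phi(w)+\psi(w)+u=w$.

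The main obstacle is the first step: forcing an abstract quasigroup isomorphism to be affine. Everything hinges on recognizing the translation group as $\dis(Q)$ and noting it is literally the same subgroup of $\Sym(G)$ for $Q$ and $Q'$, so that the normalizer-in-$\Sym(G)$ argument applies; after that, the comparison of the coefficients of $x$, of $y$, and of the constant term is entirely routine. Minor care is also needed for degenerate cases (e.g. $|G|=1$), but these are immediate.
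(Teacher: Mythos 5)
Your proof is correct. Note that the paper itself gives no proof of \cref{Th:IsoThm}: it is imported verbatim from Dr\'apal's work, just as \cref{autquasi} (the automorphism-group description, which is the special case $Q=Q'$ of this theorem) is. So there is no in-paper argument to compare against; what you have done is supply a self-contained proof of the cited result. Your argument is the standard one and it is sound: the computation $\sigma_x\sigma_y^{-1}=t_{\phi(x-y)}$ and $\sigma_xt_g\sigma_x^{-1}=t_{\psi(g)}$ correctly identifies $\dis(Q)$ with the full translation group $T$ of $(G,+)$ (here you do use that $\phi$ is bijective, which is guaranteed by the quasigroup hypothesis); since $T$ is the same subgroup of $\Sym(G)$ for $Q$ and $Q'$ and $\bar f$ is conjugation by $f$, the isomorphism $f$ normalizes $T$ and is therefore affine, $f(x)=\alpha(x)+b$. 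The coefficient comparison is legitimate because all the composites $\alpha\phi$, $\phi'\alpha$, etc.\ are additive, and the identity $1-\phi'-\psi'=\alpha(1-\phi-\psi)\alpha^{-1}$ correctly converts the constant condition into $c'=\alpha(c+u)$ with $u\in\mathrm{Im}(1-\phi-\psi)$; the converse verification via $f(x)=\alpha(x+w)$ with $u=(1-\phi-\psi)(w)$ also checks out. The only caveat worth recording is that the reduction of ``abstract isomorphism'' to ``affine map'' is exactly where the quasigroup (latin) hypothesis enters, so the statement should not be expected to extend to affine left quasigroups with non-bijective $\phi$.
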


Now we are ready for the main result.

\begin{theor}\label{simple Rumples}
Let $p$ be a prime and $X$ be an affine simple latin cycle set of size $p^p$. Then $X$ is isomorphic to one of the following latin cycle sets:
\begin{eqnarray*}
\aff{\mathbb{Z}_p^p,(\lambda I+\D)^{-1},\A_\mu,(0,\ldots,0)},\quad \quad \aff{\mathbb{Z}_p^p,(\lambda I+\D)^{-1},\A_1,(1,0,\ldots,0)}.
\end{eqnarray*}
for $\mu,\lambda=1,\ldots,p-1$. In particular, there are $p(p-1)$ affine simple latin cycle sets of size $p^p$.
\end{theor}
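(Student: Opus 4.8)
The strategy is to start from the concrete description of all affine simple latin cycle sets of size $p^p$ provided by \cref{irrid} together with \cref{matovp2}, and then use Dr\'apal's isomorphism criterion (\cref{Th:IsoThm}) to reduce the parameters to a canonical form. By \cref{matovp2}, after a change of basis we may assume $X=\aff{\mathbb{Z}_p^p,M_b^{-1},M_a,c}$ where $M_a=A_\mu$ (up to the relabeling $M_a\leftrightarrow A_\mu$, which differs only by the transpose/ordering convention used in \cref{canonrapp}) and $M_b=\lambda I+\D$, for $\mu,\lambda\in\mathbb{Z}_p$ and $c\in\mathbb{Z}_p^p$ arbitrary. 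Simplicity forces the representation to be irreducible, which in turn forces $\mu\neq 0$: if $\mu=0$ then $A_0$ has a non-trivial invariant subspace (the span of the last basis vectors) which is also $\D$-invariant, contradicting irreducibility via \cref{corrgr}; similarly one checks $\lambda\neq 0$ is \emph{not} required for irreducibility but will matter for the normalization of $c$ — I should double check which of $\lambda=0$ is genuinely excluded (it is: $\lambda I+\D$ must be invertible since $\phi$ is a bijection, and $\D$ is nilpotent so $\lambda I+\D$ is invertible iff $\lambda\neq 0$). So the raw list is parametrized by $(\lambda,\mu)\in\{1,\dots,p-1\}^2$ and $c\in\mathbb{Z}_p^p$.

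\textbf{Normalizing the constant.} The main work is to show that for fixed $(\lambda,\mu)$ the constant $c$ can be normalized, via \cref{Th:IsoThm}, to either $(0,\dots,0)$ or $(1,0,\dots,0)$, and moreover that the second case only arises when $\mu=1$, and then collapses all nonzero choices into one orbit. Dr\'apal's criterion says $\aff{G,\phi,\psi,c}\cong\aff{G,\phi',\psi',c'}$ iff there is $\alpha\in\Aut(\mathbb{Z}_p^p,+)$ and $u\in\mathrm{Im}(1-\phi-\psi)$ with $\phi'=\alpha\phi\alpha^{-1}$, $\psi'=\alpha\psi\alpha^{-1}$, $c'=\alpha(c+u)$. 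I first analyze the subgroup $\mathrm{Im}(1-\phi-\psi)=\mathrm{Im}(I-M_b^{-1}-M_a)$: using the Weyl-algebra relation $[\psi,\phi^{-1}]=\mathrm{id}$ one shows $1-\phi-\psi$ is either invertible (whence $u$ ranges over all of $\mathbb{Z}_p^p$ and $c$ can be killed entirely, giving only the $(0,\dots,0)$ representative) or has a one-dimensional cokernel. A rank computation — expanding $1-M_b^{-1}-M_a$ explicitly, or better, computing $\det(1-\phi-\psi)$ as an element of $\mathbb{Z}_p$ in terms of $\lambda,\mu$ — will separate these two cases, and I expect the degenerate case (corank $1$) to occur precisely for a single value, namely $\mu=1$ (with $\lambda$ free). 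Then I use the automorphisms $\alpha$ that centralize both $\phi$ and $\psi$: by \cref{centralizer of A_a} and \cref{commutator with D} the simultaneous centralizer of $A_\mu$ and (a conjugate of) $\D$ inside $M_p(\mathbb{Z}_p)$ consists only of scalars $sI$, $s\in\mathbb{Z}_p^\times$. Hence in the non-degenerate case $c\mapsto sc$ and $u$ arbitrary means every $c$ is equivalent to $0$; in the degenerate case $c\mapsto s(c+u)$ with $u$ confined to a hyperplane lets us scale the single "missing" coordinate, collapsing all $c$ outside that hyperplane to $(1,0,\dots,0)$ while $c$ inside the hyperplane goes to $0$ — but one must check that $(1,0,\dots,0)$ and $(0,\dots,0)$ are genuinely non-isomorphic (they are, since $0\notin\mathrm{Im}(1-\phi-\psi)$ would be needed, i.e. $(1,0,\dots,0)\not\in\mathrm{Im}(1-\phi-\psi)$, which is exactly the content of the corank-$1$ analysis).

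\textbf{Counting and the hardest step.} Putting this together: for each of the $(p-1)^2$ pairs $(\lambda,\mu)$ with $\mu\neq 1$ there is exactly one cycle set $\aff{\mathbb{Z}_p^p,(\lambda I+\D)^{-1},A_\mu,(0,\dots,0)}$; for $\mu=1$ and each $\lambda\in\{1,\dots,p-1\}$ there are exactly two, namely with constant $(0,\dots,0)$ and $(1,0,\dots,0)$. That gives $(p-1)(p-2)+2(p-1)=(p-1)\cdot p=p(p-1)$ isomorphism classes, matching the claim. Finally I must verify no further identifications occur \emph{across} different $(\lambda,\mu)$: since $\alpha\phi\alpha^{-1}=\phi'$ and $\alpha\psi\alpha^{-1}=\psi'$ forces $M_a$ and $M_a'$ (resp.\ $M_b,M_b'$) to be conjugate, and the eigenvalue $\lambda$ of $M_b=\lambda I+\D$ and the quantity $\mu$ (recoverable as $\det(M_a)$ up to sign, or from the characteristic polynomial $x^p-\mu$ of $A_\mu$) are conjugacy invariants, distinct $(\lambda,\mu)$ give non-isomorphic cycle sets. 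I expect the genuinely delicate step to be the rank/determinant computation of $1-\phi-\psi$ and the identification of exactly when it drops rank — this is the linear-algebra heart of the argument and the place where the special role of $\mu=1$ emerges; everything else is an application of the already-established structural results \cref{irrid}, \cref{matovp2}, \cref{centralizer of A_a}, \cref{commutator with D}, and \cref{Th:IsoThm}.
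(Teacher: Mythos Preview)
Your proposal is correct and follows essentially the same route as the paper: reduce to the canonical form $\aff{\Z_p^p,(\lambda I+\D)^{-1},A_\mu,c}$ via \cref{irrid} and \cref{matovp2}, then apply \cref{Th:IsoThm} with the observation (from \cref{centralizer of A_a} and \cref{commutator with D}) that the simultaneous centralizer consists only of scalars, and finally compute the rank of $I-(\lambda I+\D)^{-1}-A_\mu$ (the paper does this by left-multiplying by $\lambda I+\D$ to get an explicit matrix whose rank is $p$ for $\mu\neq 1$ and $p-1$ for $\mu=1$, the image in the latter case being the hyperplane $x_1+\cdots+x_p=0$). The count $(p-1)(p-2)+2(p-1)=p(p-1)$ and the separation of distinct $(\lambda,\mu)$ via the characteristic polynomials of $A_\mu$ and $\lambda I+\D$ are exactly as you describe.
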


\begin{proof}
According to \cref{irrid} and \cref{matovp2}, we have that $X$, which we indicate by $X(\lambda,\mu,c)$ for its dependence on the three parameters, is given by $X(\lambda,\mu,c)=\aff{\Z_p^p,(\lambda I+\Delta)^{-1},\A_\mu ,c}$. If $X(\lambda,\mu,c)$ and $X(\lambda',\mu',c')$ are isomorphic then the matrices $A_\mu$ and $A_{\mu'}$ (resp. $\mu I+\Delta$ and $\mu' I+\Delta$) are conjugate according to \cref{Th:IsoThm}. Therefore such matrices have the same eigenvalues, i.e. $\lambda=\lambda'$ (resp. $\mu=\mu'$). By this fact and \cref{Th:IsoThm}, $X(\lambda,\mu,c)$ and $X(\lambda',\mu',c')$ are isomorphic if and only if and only if $\lambda=\lambda'$, $\mu=\mu'$ and there exists $\alpha\in GL_p(\mathbb{Z}/p\mathbb{Z})$ and $u\in \mathrm{Im}(1-(\lambda I+\Delta)^{-1}-\A_\mu)$ such that $\mu I+\D=\alpha (\mu I+\D)\alpha^{-1}$, $\A_\mu=\alpha \A_\mu\alpha^{-1}$ and $c'=\alpha(c+u)$. By virtue of \cref{commutator with D} then $\alpha=s I$ with $s\neq 0$. Therefore
$$c'-s c=s u\in \mathrm{Im}(1-(\lambda I+\Delta)^{-1}-\A_\mu),$$
i.e. $c$ and $c'$ are generate the same subspace modulo $\mathrm{Im}(1-(\lambda I+\Delta)^{-1}-\A_\mu)$. Thus, we need to understand the dimension of $\mathrm{Im}(1-(\lambda I+\Delta)^{-1}-\A_\mu)$ in order to account how many different isomorphism classes we have. 
The dimension of $\mathrm{Im}(1-(\lambda I+\Delta)^{-1}-\A_\mu)$ equals the dimension of
$$\mathrm{Im}(\lambda I +\Delta)(1-(\lambda I+\Delta)^{-1}-\A_\mu)=\mathrm{Im}((\lambda I+\Delta)(1-\A_\mu) -1).$$
Now, we have that $(\lambda I+\Delta)(1-\A_\mu) -1$ can be written as
\begin{equation*}
  \begin{bmatrix}
\lambda & p-1& 0 &0&0 & \ldots & -\lambda\mu \\
-\lambda & \lambda+1 & p-2 &0 &0 &\ldots & 0\\
0 &-\lambda &  \lambda+2 & p-3&0 & \ldots & 0\\
\vdots & \vdots & \vdots &\vdots & \vdots & \vdots & \vdots\\
\vdots & \vdots &0& -\lambda &\lambda+p-3 & 2 & 0\\
0  &\ldots &0  & 0& -\lambda &\lambda+p-2 & 1\\
0&  &\ldots &0  & 0 &-\lambda & \lambda+p-1
\end{bmatrix}.\end{equation*}
By a standard linear algebra argument, we have that the rank of the matrix above is $p$ if $\mu\neq 1$ and $p-1$ if $\mu=1$. Thus, in the first case the rank of $\mathrm{Im}(1-(\lambda I+\D)^{-1}-A_\mu)$ is $p$ and hence two arbitrary constant $c,c'$ provide isomorphic solutions. Now, let $\mu=1$ and so the rank of $\mathrm{Im}((\lambda I+\Delta)(1-\A_1) -1)$ is $p-1$. By a standard linear algebra argument, we have that $\mathrm{Im}((\lambda I+\Delta)(1-\A_1) -1)$ is the subspace given by the vectors $(x_1,\ldots,x_p)$ with $x_1+...+x_p=0$ and therefore the vector $(1,0,\ldots,0)$ does not belong to this space. Moreover, the vector $(1,0,\ldots,0)$ does not belong to $\mathrm{Im}(1-(\lambda I+\Delta)^{-1}-\A_\mu)$, because otherwise, being $(1,0,\ldots,0)$ an eigenvector of $(\lambda I+\Delta)$ and $\lambda \neq 0$, it would imply that $(1,0,\ldots,0)\in \mathrm{Im}((\lambda I+\Delta)(1-(\lambda I+\Delta)^{-1}-\A_\mu))$, a contradiction. By a standard calculation, we obtain that there are two isomorphism classes of simple cycle sets of the form $X(\lambda,1,c)$, one is given by $c:=(0,\ldots,0)$ and the other one is given by $c:=(1,0,\ldots,0)$. Hence we obtain $(p-1)\cdotp (p-2)$ simple affine cycle sets for $\mu\neq 1$ and $2\cdotp (p-1)$ for $\mu=1$, therefore the statement follows.
\end{proof}

\section{Applications and examples}
In this section, we collect examples of simple latin cycle sets obtained by our results and we discuss some applications to open questions present in literature.

\medskip

We start by a non-existence result that extend \cite[Lemma 4.13]{bon2019}. In particular, we show that we have no latin cycle sets with cyclic displacement group.

 \begin{theor}\label{latin1}
    Let $X$ be a finite latin cycle set with cyclic displacement group. Then, $|X|=1$.
\end{theor}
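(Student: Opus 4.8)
The plan is to assume $|X|=n>1$ and derive a contradiction by producing an \emph{indecomposable cycle set of prime size with non-trivial displacement group}, which cannot exist. The reduction is short: since $X$ is latin, by \cref{prel3} it is irretractable and indecomposable and $\dis(X)$ is transitive on $X$; as $\dis(X)$ is cyclic, hence abelian, and a transitive abelian permutation group of a finite set is regular, $\dis(X)$ acts regularly and $|\dis(X)|=n$. The cornerstone is the sub-claim that \emph{every indecomposable cycle set $Z$ of prime size $p$ has $\dis(Z)=1$}: indeed by \cref{classp} (\cite[Theorem 2.13]{etingof1998set}) such a $Z$ is a deformation of the trivial cycle set of size $p$, hence is itself a trivial cycle set $z\cdot w=\beta(w)$ for some $\beta\in\Sym_Z$, so $\sigma_z=\beta$ for all $z$ and $\dis(Z)=\langle \sigma_z\sigma_w^{-1}\rangle=1$. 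In particular, if $n$ is prime we are done immediately, since $X$ itself would be an indecomposable cycle set of prime size with $|\dis(X)|=n>1$.

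It remains to treat $n$ composite, and here I would fix a prime $p\mid n$ with $p<n$ and mimic the ideal construction in the proof of \cref{cardpr}. Recall that $\dis(X)=\mathcal{G}(X)^2$ (\cite[Lemma 11]{castelli2023indecomposable}) is an ideal of the left brace $\mathcal{G}(X)$, whose multiplicative group $(\dis(X),\circ)\cong\Z/n\Z$ is cyclic; hence its additive group is cyclic as well, and the subgroup $H:=p\,\dis(X)$ of order $n/p$ is a characteristic subgroup of both $(\dis(X),+)$ and $(\dis(X),\circ)$, so — being a characteristic subgroup of the ideal $\dis(X)$ — it is an ideal of $\mathcal{G}(X)$ with $1<|H|=n/p<n$. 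By \cref{epiid}, $\mathcal{O}_H$ is a congruence of $X$; since $\dis(X)$ is regular, $H\le\dis(X)$ is semiregular, so each $\mathcal{O}_H$-block has size $n/p$ and $Y:=X/\mathcal{O}_H$ is a cycle set with $|Y|=p$. Writing $\pi\colon X\to Y$ for the projection and $\bar\pi\colon\mathcal{G}(X)\to\mathcal{G}(Y)$ for the induced epimorphism (\cref{lemmaprep1}), one has $\dis(Y)=\bar\pi(\dis(X))$, which is transitive on $Y$, so $Y$ is indecomposable; identifying $X$ with $\dis(X)$ acting regularly, the $\mathcal{O}_H$-blocks are the cosets of $H$ and an element of $\dis(X)$ fixes every block precisely when it lies in $H$, whence $\ker\bar\pi\cap\dis(X)=H$ and $|\dis(Y)|=|\dis(X)|/|H|=p>1$. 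Thus $Y$ is an indecomposable cycle set of prime size $p$ with $\dis(Y)\neq 1$, contradicting the sub-claim. Therefore $|X|=1$.

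I expect the only delicate point to be the brace-theoretic step in the composite case: verifying that $H=p\,\dis(X)$ really is an ideal of $\mathcal{G}(X)$. This rests on the fact that a left brace with cyclic multiplicative group has cyclic additive group (used in the proof of \cref{cardpr}, via \cite{rump2007classification} and \cite[Proposition 5.4]{bachiller2016solutions}), together with the routine observation that $\lambda$-invariance of $H$ follows from its being characteristic in $(\dis(X),+)$ and normality of $H$ in $(\mathcal{G}(X),\circ)$ from its being characteristic in $(\dis(X),\circ)$. Everything else — the passage to the quotient and the computation of $\dis(Y)$ — is bookkeeping with the regular action of $\dis(X)$ and with $\bar\pi$.
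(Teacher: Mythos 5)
Your overall strategy is the paper's own: quotient by the ideal $H=p\,\dis(X)$ to produce a cycle set of prime size that violates \cite[Theorem 2.13]{etingof1998set} (your variant of the final contradiction, via $\dis(Y)\neq 1$ rather than via latinness of the quotient, is an inessential difference). But there is a genuine gap at $|X|=4$. Your composite case rests on the assertion that a left brace with cyclic multiplicative group has cyclic additive group; this is false for braces of order $4$: there is a left brace with $(B,+)\cong\Z/2\Z\times\Z/2\Z$ and $(B,\circ)\cong\Z/4\Z$. If $(\dis(X),+)$ were Klein four, your subgroup $H=2\,\dis(X)$ would be trivial, the quotient would be $X$ itself, and no contradiction would arise. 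The case is not vacuous: latin cycle sets of size $4$ do exist (the two irretractable indecomposable ones of order $4$, cf. \cref{pquad}), so one must actually check that their displacement groups are not cyclic. The paper does exactly this, splitting off $n=2^t$: $t=1$ by \cite[Theorem 2.13]{etingof1998set}, $t=2$ by a direct inspection of the small cycle sets, and $t>2$ via \cite[Proposition 5.4]{bachiller2016solutions}; for $n$ with an odd prime divisor the cyclicity of the relevant additive Sylow subgroup is covered by \cite{rump2007classification} as in \cref{cardpr}.

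The rest of your argument is sound: the reduction to the regular case, the sub-claim that indecomposable cycle sets of prime size have trivial displacement group, the verification that a subgroup characteristic in both group structures of the ideal $\dis(X)$ is an ideal of $\mathcal{G}(X)$, and the computation $\ker\bar{\pi}\cap\dis(X)=H$ giving $|\dis(Y)|=p$ all check out. To close the gap you only need to supply the order-$4$ verification (or any argument ruling out a latin cycle set of size $4$ with $\dis(X)\cong\Z/4\Z$), and to restrict the claim ``cyclic multiplicative implies cyclic additive'' to the cases where the cited results actually deliver it (odd order, and $2$-power order at least $8$).
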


\begin{proof}
    Let $n:=|X|$ and suppose that $\dis(X)$ is a cyclic group. At first, suppose that $n$ has an odd prime divisor $p$ and let $m$ be the positive integer such that $n/m=p$. As in \cref{cardpr}, the set $H:=p \dis(X)$ is an ideal of $\dis(X)$ of size $m$ and is a characteristic subgroup of $(\dis(X),+)$ and $(\dis(X),\circ)$, hence it is an ideal of $\mathcal{G}(X)$. Therefore, $H$ induces a congruence $\sim$ on $X$ such that $|X/\sim|=p$, and this implies that there exist a latin cycle set of size $p$, but this contradicts \cite[Theorem 2.13]{etingof1998set}. Now, we consider the case $n=2^t$ for some natural number $t$. The case $t=1$ is not possible by \cite[Theorem 2.13]{etingof1998set}. The case $t=2$ is excluded by an inspection of small cycle sets. The case $t>2$ can be showed as in the first part by \cite[Proposition 5.4]{bachiller2016solutions}.
\end{proof}

By \cite{dietzel2023indecomposable} every indecomposable and irretractable cycle set of size $p^2$ is simple. A complete description of the latin ones follows by \cite[Theorem $5.1$]{dietzel2023indecomposable}, as we show in the following result.

\begin{prop}\label{pquad}
    Let $X$ be a latin cycle set of size $p^2$. Then, $X$ is one of the two indecomposable and irretractable cycle sets of order $4$.
\end{prop}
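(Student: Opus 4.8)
The plan is to reduce the statement to the classification of simple affine cycle sets of size $p^p$ provided in \cref{simple Rumples} together with the known classification of irretractable indecomposable cycle sets of size $p^2$ from \cite{dietzel2023indecomposable}. First I would observe that any latin cycle set $X$ of size $p^2$ is in particular irretractable and indecomposable by \cref{prel3}, so by \cite[Theorem 5.1]{dietzel2023indecomposable} it is simple and, moreover, $X$ is affine. At this point I would apply \cref{elab}: since $X$ is a simple affine latin cycle set, $\dis(X)$ is an elementary abelian $p$-group of size $p^k$ with $p\mid k$; since $|X|=p^2$ and $\dis(X)$ acts regularly (\cref{carattsimple2}, \cref{corright}), we get $p^k=p^2$, hence $k=2$, and the divisibility condition $p\mid k$ forces $p=2$. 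Thus the only candidate prime is $p=2$, and $|X|=4$.

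Next I would invoke the special case $p=2$ of \cref{simple Rumples} (equivalently, the direct enumeration of cycle sets of size $4$ already used elsewhere in the paper, e.g. in the proof of \cref{cardpr} and \cref{latin1}). For $p=2$ the theorem yields $p(p-1)=2\cdot 1=2$ affine simple latin cycle sets of size $4$, namely $\aff{\mathbb{Z}_2^2,(\lambda I+\Delta)^{-1},\A_\mu,(0,0)}$ and $\aff{\mathbb{Z}_2^2,(\lambda I+\Delta)^{-1},\A_1,(1,0)}$ with $\lambda=\mu=1$. These are precisely the two indecomposable and irretractable cycle sets of order $4$, which completes the argument: every latin cycle set of size $p^2$ is one of the two indecomposable irretractable cycle sets of order $4$, and in particular no such object exists unless $p=2$.

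I expect the only real subtlety to be making the case $p\ne 2$ genuinely impossible rather than merely ``not covered'': the cleanest route is the one above, where \cref{elab} forces $p\mid k$ and $p^k=p^2$ simultaneously, giving $p=2$ with no room for any other prime. An alternative, if one wanted to avoid citing \cref{elab}, would be to note directly that a latin cycle set of size $p^2$ with $p$ odd would, by \cref{carrightlin} and the ideal-correspondence of \cref{idcont}, admit a nontrivial ideal of $\mathcal{G}(X)$ inside $\dis(X)$ (for instance via a characteristic subgroup of the cyclic or abelian displacement group, as in \cref{cardpr} and \cref{latin1}), contradicting simplicity — but appealing to \cref{elab} and \cref{simple Rumples} is shorter and self-contained within the paper. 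The remaining step, identifying the two surviving cycle sets with ``the two indecomposable and irretractable cycle sets of order $4$,'' is a finite check and is already implicit in the references to small cycle sets made earlier in the paper.
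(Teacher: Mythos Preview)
Your argument hinges on the assertion that a latin cycle set of size $p^2$ is \emph{affine}, which you attribute to \cite[Theorem~5.1]{dietzel2023indecomposable}. That is the step that does not hold up. What \cite[Theorem~5.1]{dietzel2023indecomposable} actually provides is the explicit description
\[
(a,x)\cdot(b,y)=(\alpha b+\alpha x,\ \alpha y+\alpha\phi(b-a)),
\]
where $\phi$ is merely a map satisfying $\phi(-x)=\phi(x)$ and $\phi(\alpha x)=\alpha\phi(x)$, \emph{not} an endomorphism of $(\mathbb{Z}/p\mathbb{Z})^2$. This is therefore not affine in the sense of \cref{defrightlin}, and so \cref{elab} (which is where the crucial divisibility $p\mid k$ comes from, via the Weyl-algebra representation theory of \cref{collweyl}) is not available. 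The references you give for regularity of $\dis(X)$ do not help either: \cref{carattsimple2} yields only transitivity, and \cref{corright} \emph{assumes} regularity. Even if one could arrange $\dis(X)$ abelian and regular, \cref{secondp2} still requires normality of $\dis(X)$ in $\mathcal{TG}(X)$ to conclude affineness, and nothing you cite supplies that. So the implication ``$p\mid k$, $p^k=p^2\Rightarrow p=2$'' is floating on an unproved hypothesis.

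The paper's proof takes a different and much more direct route: it starts from the same explicit description above and simply checks, for $p>2$, that the right multiplications are not injective. Using the evenness condition $\phi(-x)=\phi(x)$, one has $(a,x)\cdot(b,y)=(a',x)\cdot(b,y)$ for $a'=2b-a\neq a$, so $X$ cannot be latin. The case $p=2$ is then handled by the known enumeration of the two indecomposable irretractable cycle sets of order~$4$. No affineness, no Weyl algebra, no \cref{simple Rumples} is needed; the explicit parity of $\phi$ already kills the latin property for odd primes.
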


\begin{proof}
 By \cref{prel3}, $X$ must be indecomposable and irretractable. If $p=2$, we know by \cite{bon2019} that the two indecomposable and irretractable cycle sets of order $4$ are latin. If $p>2$, by \cite[Theorem $5.1$]{dietzel2023indecomposable} $X$ is given by $\mathbb{Z}/p\mathbb{Z}\times \mathbb{Z}/p\mathbb{Z}$ and $(a,x)\cdotp (b,y)=(\alpha b+\alpha x,\alpha y+\alpha \phi(b-a))$, where $\alpha \in \mathbb{Z}/p\mathbb{Z}\setminus \{0\}$ and $\phi$ is a map from $\mathbb{Z}/p\mathbb{Z}$ to $\mathbb{Z}/p\mathbb{Z}$ with $\phi(x)=\phi(-x)$ and $\phi(\alpha x)=\alpha \phi(x)$ for all $x\in \mathbb{Z}/p\mathbb{Z}$.
Let $a,b\in \mathbb{Z}/p\mathbb{Z}$ with $a\neq b$ and set $a':=2b-a$. Since $p>2$ we have $a\neq a'$, but $(a,x)\cdotp (b,y)=(a',x)\cdotp (b,y)  $, a contradiction. 
\end{proof}

\begin{cor}
    Let $X$ be a latin cycle set of size $p^3$. Then, $X$ is simple.
\end{cor}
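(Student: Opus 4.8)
The plan is to assume $X$ is not simple and reach a contradiction. By \cref{prel3}, $X$ is indecomposable and irretractable, so let $\alpha$ be a congruence of $X$ with $0_X\subsetneq\alpha\subsetneq 1_X$ and put $Q:=X/\alpha$. First I would note that all $\alpha$-classes have the same size: given two classes $[a],[b]$, pick $c\in X$ with $a\cdot c=b$ (possible since $\sigma_a$ is onto); then $\alpha$-compatibility of $\cdot$ forces the bijection $\delta_c$ to map $[a]$ into $[a\cdot c]=[b]$, whence $|[a]|\le|[b]|$, and by symmetry $|[a]|=|[b]|$. Since $|X|=p^3$, this common size divides $p^3$, and the values $1$ and $p^3$ correspond exactly to $\alpha=1_X$ and $\alpha=0_X$; hence $|Q|\in\{p,p^2\}$. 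Moreover $Q$ is again a latin cycle set, because on a finite set a left quasigroup is a quasigroup, a left quasigroup congruence is a quasigroup congruence, and the quotient of a quasigroup is a quasigroup.

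Next I would rule out $|Q|=p$. In that case $Q$ is a latin cycle set of prime size, hence indecomposable and irretractable by \cref{prel3}; but \cref{classp} says every indecomposable cycle set of prime size is a deformation of a trivial cycle set of size $p$, and in such a cycle set all the right translations $\delta_x$ are constant maps, so it cannot be latin when $p>1$ — a contradiction. (Equivalently: there is no latin cycle set of prime size.)

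It remains to treat $|Q|=p^2$. If $p$ is odd, \cref{pquad} (whose proof shows that for $p>2$ no latin cycle set of size $p^2$ exists) already excludes this case. The only surviving possibility is $p=2$ and $|Q|=4$, i.e.\ $Q$ is one of the two (simple) indecomposable irretractable cycle sets of order $4$, and $\alpha$ is a congruence of a latin cycle set of order $8$ all of whose classes have size $2$. I expect this to be the main obstacle: it is not a consequence of the structural results used in the odd case, and I would dispatch it by a direct inspection of the finitely many latin cycle sets of order $8$, verifying that none of them admits a congruence onto a cycle set of order $4$. (Alternatively one could try a uniform argument via \cref{carattsimple2}, showing that every nonzero admissible subgroup of $\mathcal{G}(X)$ is transitive, but for size $8$ this again seems to reduce to an explicit computation.)
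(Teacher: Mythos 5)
Your proposal is correct and follows essentially the same route as the paper: for $p>2$ a non-trivial congruence would yield a latin quotient of size $p$ or $p^2$, contradicting \cite[Theorem 2.13]{etingof1998set} and \cref{pquad}, and the case $p=2$ is left to a finite computation. The only difference is that the paper's computation establishes the stronger fact that there are \emph{no} latin cycle sets of size $8$ at all, which makes the statement vacuous for $p=2$ and renders your proposed inspection of congruences unnecessary.
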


\begin{proof}
    If $p=2$, one can show by computer calculation that there are no latin cycle sets of size $8$. If $p>2$ and $X$ is not simple, then we obtain a latin cycle set having size in $\{p,p^2\}$, but this contradicts \cite[Theorem 2.13]{etingof1998set} and \cref{pquad}.
\end{proof}

\smallskip
In the following example, we apply \cref{descrpr} to show the simplicity of a right linear cycle set.

\begin{ex}
Using the transitive cycle bases of the left brace $SmallBraces(81,804)$ of \cite{Ve15pack}, we obtain an irretractable cycle set $X$ of size $27$ having the displacement group, isomorphic to the extra-special $3$-group of size $27$, that acts transitively, and hence regularly, on $X$. By \cref{secondp}, this cycle set is right linear over $\dis(X)$. By a long but standard calculation (helping us by GAP), one can write $X$ as a cycle set on a group $(G,\circ)$ isomorphic to $\dis(X)$, where $G:=\{1,...,27\}$ with $1$ as neutral element, given by $x\cdotp y:=\phi(x)\circ \psi(y)$, where
$$\phi:=( 1,12,27,13,18,10,11,20,17, 9,19,21,22,16,26, 5,15, 2, 4, 7,23, 3,25, 6,14,24)$$
and
$$\psi:=( 2,25,16)( 3,14,13)( 4, 8,27)
( 5,23,21)( 6,19,18)( 7,26,22)( 9,20,24)(10,11,17).$$ The unique $\psi$-invariant subgroups are $H_1:=\{1,12,15 \}$ and $H_2:=\{1,5,9,12,15,20,21,23,24\}$. On the other hand, $\phi(1)=12$ and $\phi(15)=2$, hence $H_1$ and $H_2$ are not invariant under $\phi$. Therefore, by \cref{descrpr}, $X$ is a simple cycle set right linear over $(G,\circ)$. 

\end{ex}

In \cite{RaVe21}, the following question was posed.

\begin{ques}(Question $3.16$ of \cite{RaVe21})\label{quest1}
    Let $X$ be a cycle set. Is it true that if some $\sigma_x$ contains a non-trivial cycle coprime with $|X|$, then $X$ is decomposable?
\end{ques}

For some classes of cycle sets, the answer to the previous question is positive (see \cite{castelli2023indecomposable,kanrar2024decomposability}). In \cite{Kanrar2024}, Kanrar and Rump found a family of counterexamples: they are non-simple cycle sets of $2-$power size and the smaller one has size $256$. Below, we show that the construction of smaller counterexamples that are also simple cycle sets is possible. At first, we need a lemma implicitly contained in \cite[Section 5]{rump2020one}.

\begin{lemma}\label{pfixed}
    Let $X$ be an affine cycle set given by $x\cdotp y:=\phi(x)+\psi(y)$ for all $x,y\in X$. Then, $x\cdotp y=y$ if and only if $\phi(x)=(I-\psi)y$. If $(I-\psi)$ is invertible, this is equivalent to $(I-\psi)^{-1}\phi(x)=y$ and in this case every $\sigma_x$ has only one fixed point.  
\end{lemma}

\begin{ex}\label{esbase2}
    Let $p$ be a prime number, with $p>2$, and $X$ be the simple affine cycle set of size $p^p$ provided by \cref{simple Rumples} with  $\lambda=1$, $\mu\neq 1$, and $c=(0,\ldots ,0).$
    Since the determinant of $I-B$ is $1-\mu$, by \cref{pfixed} every left multiplication of $X$ has only one fixed point. Let $x$ be an element of $X$. If every non-trivial cycle of $\sigma_x$ is not coprime with $p$, and hence divisible by $p$, we obtain that number of the fixed points of $\sigma_x$ is divisible by $p$, a contradiction. Therefore, $X$ is a counterexample to \cref{quest1}.
\end{ex}

In \cite{cedo2021constructing}, the following question was posed.

\begin{ques}(Question $7.6$ of \cite{cedo2021constructing})
Does there exist a finite simple cycle set $X$  such that $X = Y\times Z$, the sets $X_y = \{(y, z) \mid  z \in Z\}$, for $y\in  Y$, are blocks of imprimitivity for the 
action of $\mathcal{G}(X)$ on $X$, and $|Z|$ is not a divisor of $|Y|$?
\end{ques}

Below, we provide a family of examples that answer positively to the previous question.

\begin{ex}
Let $p$ be a prime number, with $p>2$, and $X$ be the simple affine cycle set of size $p^p$ obtained by \cref{simple Rumples} with $\lambda=\mu=1$ and $c:=(0,\ldots,0)$. Since $\psi$ has $-(x-1)^p$ as characteristic polynomial, the Jordan canonical form of $\psi$ exists in $\mathbb{Z}/p\mathbb{Z}$ and is an upper triangular matrix. Let $\mathcal{B}:=\{e_1,\ldots,e_p \}$ the basis of $X$ that realizes the required form. Then, the subspace $H$ generated by the set $\{e_1,\ldots,e_{p-1} \}$ is $\psi$-invariant and has dimension $p-1$, moreover $X=H\oplus <e_p>$ and it correspond bijectively to the set $Y\times Z$ given by $Y:=\{0\}\oplus <e_p>$ and $Z:=H\oplus \{0\}$, where the correspondence is given by $h+t e_p\mapsto ((0,te_p),(h,0)) $ for all $h\in H$, $t\in \mathbb{Z}/p\mathbb{Z}$. By \cref{imprblo}, $X$ has a complete blocks system $\{X_y \}_{y\in Y} $, where $X_y:= \{(y, z) \mid  z \in Z\}$, for $y\in  Y$, has size $p^{p-1}$, and with $|Y|=p$. 
\end{ex}

\section{Declarations}

Not applicable.

\section{Ethical Approval }

Not applicable.

\section{Funding}

Not applicable.

\section{Availability of data and materials }

Not applicable.

\bibliographystyle{elsart-num-sort}
\bibliography{Bibliography2}

\end{document}